   \newif\ifpdf
\let\mathbb\mathds
\newcommand{\N}{\mathbb{N}} 
\newcommand{\C}{\mathbb{C}}
\newcommand{\teich}{\mathrm{Teich}}
\newtheorem{theo}{Theorem}  
\newtheorem{coro}{Corollary}      
\newtheorem{defi}{Definition} 
\newtheorem{lem}{Lemma} 
\newtheorem{rem}{Remark}
\newtheorem{step}{Step}
\newtheorem*{theoA}{Theorem A}
\newtheorem*{theoB}{Theorem B}
\newtheorem*{theoC}{Theorem C}
\newcommand{\rs}{\mathbb{P}^1}
\newcommand{\limn}{\lim_{n \rightarrow \infty}}
\newcommand{\id}{\mathrm{Id}}
\newcommand{\mcal}{\mathcal{M}}
\newcommand{\fatou}{\mathcal{F}}
\newcommand{\julia}{\mathcal{J}}
\newcommand{\ptwo}{\mathbb{P}^2}
\newcommand{\pk}{\mathbb{P}^k}
\newcommand{\card}{\mathrm{card\,}}
\newcommand{\pcal}{\mathcal{P}}
\newcommand{\jac}{\mathrm{Jac}}
\newcommand{\lcm}{\mathrm{lcm}}
\newcommand{\struct}{\textbf{s}}
\newcommand{\pf}{\mathcal{P}}
\newcommand{\crit}{\mathcal{C}}
\newcommand{\sing}{\mathrm{Sing}}
\newcommand{\cv}{\mathrm{CV}}
\newcommand{\reg}{\mathrm{Reg}}
\newcommand{\codim}{\mathrm{codim}\,}
\title{Dynamics of post-critically finite maps in higher dimension}
\author{Matthieu Astorg}
\email{matthieu.astorg@univ-orleans.fr}
\address{Université d'Orléans, Collegium Sciences et Techniques \\
	Bâtiment de mathématiques - Rue de Chartres\\
	B.P. 6759 - 45067 Orléans Cedex 2 \\
	FRANCE}                        
\begin{document}
	\begin{abstract}
		We study the dynamics of post-critically finite endomorphisms of $\pk(\C)$. We prove that post-critically finite
		endomorphisms are always post-critically finite all the way down under a mild regularity condition on the post-critical set.
		We study the eigenvalues of periodic points of post-critically finite endomorphisms.
		Then, under a weak transversality condition and assuming Kobayashi hyperbolicity of the complement of the post-critical 
		set, we prove that the only possible Fatou components are super-attracting basins, thus partially extending to 
		any dimension a result of Fornaess-Sibony and Rong holding in the case $k=2$.
	\end{abstract}

	\maketitle

	\section{Introduction}

	This paper deals with the dynamics of some endomorphisms of the complex projective space $\pk$,
	called post-critically finite (PCF). 
	An endomorphism $f: \pk\to \pk$ is post-critically finite if
	$$\pcal(f,\pk):=\bigcup_{n \geq 1} f^n(\crit(f))$$
	is algebraic, where $\crit(f, \pk):=\{z \in \pk, Df(z) \text{ is not invertible}\}$. 
	The set $\crit(f, \pk)$ is called the critical set of $f$, and the set $\pf(f,\pk)$ is called the post-critical set 
	of $f$. 
	We will often consider restrictions of $f$ to an invariant subvariety $L \subset \pk$: in this case,
	$\crit(f,L)$ and $\pf(f,L)$ will refer to the critical and post-critical sets of the restriction map $f: L \to L$.
	When there is no ambiguity, we will just denote those sets by $\pf(f)$ and $\crit(f)$.

	In the case where $k=1$, this class of endomorphism has been particularly studied, mainly because of 
	an important classification theorem due to Thurston
	(\cite{thurston1985combinatorics}). The dynamics of those maps is also well understood.
	
	Thurston's theorem does not hold in higher dimension ($k>1$), but post-critically finite 
	maps still are of interest, for example because of a construction due to Koch 
	(\cite{koch2013teichmuller}).
	The dynamics of PCF maps has been investigated by several authors. 
	In \cite{fornaess1992critically}, Fornaess and Sibony studied the dynamics of two specific
	examples of PCF endomorphisms of $\ptwo$.
	In \cite{fornaess1994complex}, the same authors considered more generally 
	PCF endomorphisms
	of $\pk$ and studied the case $k=2$. They notably proved that if $\ptwo \backslash \pf(f)$ is Kobayashi hyperbolic, where $f$ is a PCF
	endomorphism of $\ptwo$, then the only Fatou components of $f$ are basins of superattracting cycles.
	As Jonsson notes in \cite{jonsson1998some}, the case of dimension 2 is still special, as in this case the post-critical set 
	has dimension 1, and it can be proved that the normalization of post-critical components must be either 
	tori or projective lines. In still higher dimension ($k \geq 3$), no such classification is readily available.

	In \cite{ueda1998critical}, Ueda defined the notion of an endomorphism $f: \pk \to \pk$ that is 
	PCF of order $m$: informally, a PCF endomorphism is PCF of order 1. Then every critical component
	is eventually mapped to a periodic post-critical component, and one can consider the first return maps 
	induced by restriction of iterates of $f$ to those periodic components. If all of them are themselves
	post-critically finite, then $f$ is post-critically finite of order 2. One can then define inductively 
	what it means to be post-critically finite of order $m$, with $m \leq k$ (see Definition \ref{def:pcfatwd} for 
	a precise definition). 
	Post-critically finite endomorphisms of $\pk$ of order $k$ are somewhat colloquially referred to as 
	\emph{post-critically finite all the way down} (the authors of \cite{gauthier2016symmetrization} use the 
	term \emph{strongly post-critically finite}). 
	
	Those notions lead to defining post-critical finiteness for
	self-branched covers $f: X \to X$, where $X$ is a regular subvariety of $\pk$, or more generally a complex manifold.
	Such maps may have empty post-critical set (if they are unbranched self-covers); 
	in this case, most authors do not consider them to be post-critically finite.
	We shall call them \emph{weakly post-critically finite}, and we say that $f$ is weakly post-critically finite all the way 
	down if every first return map on periodic post-critical components is weakly post-critically finite.
	Again, see Definition \ref{def:pcfatwd} for full details.

	In \cite{jonsson1998some}, Jonsson remarks that PCF endomorphisms of $\ptwo$ are always post-critically finite 
	all the way down. In \cite{gauthier2016symmetrization}, the authors prove that the same is true for a very specific
	subclass of PCF endomorphisms of $\pk$, and ask if the same holds for any PCF endomorphism of $\pk$.
	They also give examples of rational maps $f: \pk \dashedrightarrow \pk$ that are PCF but not PCF all the way down.
	A key point is that those examples are not endomorphisms of $\pk$ as they have non-empty indeterminacy set.	
	We give a partial answer to this question, using a purely complex-analytical approach:
	
	\begin{theoA}
		Let $X$ be a complex manifold and let $f: X \to X$ be a PCF branched covering. Let 
		$(L_i)_{i \in I}$ be the periodic irreducible components of $\pf(f,X)$ and assume that
		for any $J \subset I$, $\bigcap_{j \in J} L_j$ is smooth. Then $f$ is weakly
		PCF all the way down.
	\end{theoA}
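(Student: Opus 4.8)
The plan is to induct on $\dim X$, exploiting the recursive nature of the notion of being weakly PCF all the way down. Since $f$ is a branched covering, $\crit(f,X)$ is the zero locus of a locally defined Jacobian determinant, hence a hypersurface (or empty, in which case $\pf(f,X)=\varnothing$ and there is nothing to prove); in particular every irreducible component of $\pf(f,X)$ has positive codimension, so $\dim L_i<\dim X$ for all $i$. It therefore suffices to prove that for each periodic irreducible component $L$ of $\pf(f,X)$, say of period $p$, the first return map $g:=f^{p}|_L\colon L\to L$ is a branched covering — or an unbranched self-cover, in which case it is weakly PCF with empty post-critical set — whose post-critical set $\pf(g,L)$ is analytic, and that the pair $(L,g)$ again satisfies the smoothness hypothesis of the theorem: the inductive hypothesis then applies to $g$ since $\dim L<\dim X$, and the base case $\dim X=0$ is trivial. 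Note that $L$ and every component $f^{j}(L)$ of its $f$-cycle are themselves periodic components of $\pf(f,X)$, hence among the $L_i$ and in particular smooth.

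The first ingredient is a local lemma on restricted critical sets. If $V\subseteq X$ is a smooth $f$-invariant submanifold and $z\in V$, then $Df(z)$ maps $T_zV$ into $T_{f(z)}V$, so with respect to the filtrations $T_zV\subseteq T_zX$ and $T_{f(z)}V\subseteq T_{f(z)}X$ the linear map $Df(z)$ is block upper-triangular, with diagonal blocks $Df(z)|_{T_zV}$ and the induced map $\overline{Df}(z)\colon N_zV\to N_{f(z)}V$ on normal spaces; hence $\det Df(z)=\det\big(Df(z)|_{T_zV}\big)\cdot\det\overline{Df}(z)$, so $\crit(f|_V,V)\subseteq\crit(f,X)\cap V$. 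Applying this around the cycle of $L$ and using the chain rule for $g=f|_{f^{p-1}(L)}\circ\cdots\circ f|_L$ yields
\[
\crit(g,L)\ \subseteq\ \bigcup_{j=0}^{p-1}(f^{j}|_L)^{-1}\,\crit\big(f|_{f^{j}(L)},\,f^{j}(L)\big),
\]
where each $\crit\big(f|_{f^{j}(L)},f^{j}(L)\big)$ is a proper analytic subset of the smooth variety $f^{j}(L)$ (it cannot equal $f^{j}(L)$, for then $g$ would be nowhere a local biholomorphism).

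Pushing this forward gives $g^{m}(\crit(g,L))\subseteq\bigcup_{j}f^{\,pm-j}\,\crit\big(f|_{f^{j}(L)},f^{j}(L)\big)$ for every $m\ge1$. Since $\crit\big(f|_{f^{j}(L)},f^{j}(L)\big)\subseteq\crit(f,X)$, its $f$-orbit stays inside $\pf(f,X)$, which is where post-critical finiteness of $f$ enters decisively: $\pf(f,X)$ being analytic with finitely many irreducible components, the assignment $M\mapsto$ (the component containing $f(M)$) is an eventually periodic self-map of that finite set, with periodic orbits exactly the $L_i$; following the orbit of each $\crit\big(f|_{f^{j}(L)},f^{j}(L)\big)$ forward until it enters the periodic part, one concludes that $\pf(g,L)=\bigcup_{m\ge1}g^{m}(\crit(g,L))$ is analytic — so $g$ is weakly PCF — and, more precisely, that every irreducible component of $\pf(g,L)$ is contained in $L_i\cap L$ for some periodic component $L_i\ne L$.

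It remains to check that $(L,g)$ satisfies the smoothness hypothesis, i.e. that $\bigcap_{\alpha\in A}M_\alpha$ is smooth for every family $(M_\alpha)$ of periodic irreducible components of $\pf(g,L)$. Zero-dimensional periodic post-critical components yield only trivial first return maps and may be disregarded; so let $M$ be a positive-dimensional $g$-periodic component of $\pf(g,L)$, with $M\subseteq L_i\cap L$ and $L_i\ne L$ periodic. Passing to a common multiple of the periods involved, an iterate $h=f^{N}$ fixes each of $L$, $L_i$ and $M$ and restricts to a finite surjective self-map of the smooth submanifold $L_i\cap L=\bigcap_{j\in\{i,\,i_0\}}L_j$ (writing $L=L_{i_0}$), hence permutes its pairwise-disjoint connected components; as $M$ is irreducible and $h$-invariant, it lies in one of them. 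Granting that $M$ is in fact a union of connected components of $L_i\cap L$, it follows that $\bigcap_{\alpha}M_\alpha$ is a union of connected components of the smooth variety $\big(\bigcap_{\alpha}L_{i(\alpha)}\big)\cap L=\bigcap_{j\in J}L_j$, with $J=\{i(\alpha):\alpha\in A\}\cup\{i_0\}$, hence smooth. The main obstacle is precisely this last assertion — that a positive-dimensional periodic post-critical component of $g$ cannot be a proper subvariety of a component of $L_i\cap L$ — together with the analyticity of $\pf(g,L)$ in the presence of possible collapsing of subvarieties by $f$; both depend essentially on post-critical finiteness of $f$, whereas the block-triangular lemma and the descent of the smoothness hypothesis to the lower-dimensional intersections $\bigcap_{j\in J}L_j$ are routine.
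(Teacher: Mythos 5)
There is a genuine gap, and it sits exactly at the two points your proposal treats as routine or defers. Your ``block-triangular'' inclusion $\crit(f_{|V},V)\subseteq \crit(f,X)\cap V$ is the trivial estimate, and it breaks down in the hard case: a periodic component $L$ of $\pf(f,X)$ may itself be an irreducible component of $\crit(f,X)$ (invariant critical hypersurfaces are the typical situation, e.g.\ for Koch's moduli space maps), and then $\crit(f,X)\cap L=L$, so the inclusion carries no information; in particular your later claim that every irreducible component of $\pf(g,L)$ lies in $L_i\cap L$ with $L_i\neq L$ has no justification from it. The paper's Lemma \ref{lem:critintersect} is designed precisely for this case $L\subset\crit(f,X)$: a genuinely non-trivial local computation (writing the last coordinate of $f$ as $z_k^m g$ and dividing the Jacobian by $z_k^{m-1}$) shows that $\crit(f_{|L},L)$ is contained in the intersections of $L$ with the \emph{other} components of $\crit(f,X)$.

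Even granting that, the step ``following the orbit of each restricted critical set forward until it enters the periodic part, one concludes that $\pf(g,L)$ is analytic'' is a non sequitur. Knowing that the forward images of $\crit(g,L)$ (codimension two in $X$) stay inside $\pf(f,X)$ (codimension one, finitely many components) does not bound the number of irreducible components of $\bigcup_{m\geq 1}g^m(\crit(g,L))$: inside a single fixed component $L_i$ these images could a priori form infinitely many distinct hypersurfaces of $L_i$, which is exactly the phenomenon the theorem must exclude. Note that your argument never uses the smoothness hypothesis at this stage (only later, to descend the hypothesis), whereas it is essential here. The paper's mechanism is: since $C\subset C'\cap L$ with $C'\neq L$, the set $C$ lies in $\sing(f^{-1}(\cv(f)))$, and Ueda's Lemma \ref{lem:ueda}, applied to all iterates, forces $f^n(C)\subset\sing(\pf(f,X))$ for every $n$; by the smoothness hypothesis $\sing(\pf)$ consists of the finitely many intersections of two distinct components, of the same pure codimension as the images $f^n(C)$, so the increasing union of images must stabilize. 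Nothing in your proposal plays this role, and the assertions you explicitly defer (``Granting that $M$ is a union of connected components of $L_i\cap L$'', ``the main obstacle is precisely this last assertion'') together with the analyticity of $\pf(g,L)$ are the actual content of the theorem, not loose ends.
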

	
	In fact, we prove a slightly more precise statement, see Theorem \ref{th:pcfatwdcover}.\\

	An important topic in dynamical systems is the study of local dynamics near periodic points. 
	In holomorphic dynamics, the local dynamics of a periodic point $p$ of period $m$ is 
	governed by the eigenvalues of $Df^m(p)$; abusing terminology, we call them the eigenvalues of $p$.
	If $\lambda \in \C$ is such an eigenvalue, we say that $\lambda$ is:
	\begin{enumerate}
		\item attracting if $|\lambda|<1$
		\item repelling if $|\lambda|>1$
		\item super-attracting if $\lambda=0$
		\item neutral if $|\lambda|=1$: more precisely, $\lambda$ is 
		\begin{enumerate}
			\item[a.] parabolic if $\lambda$ is a root of unity
			\item[b.] irrationally neutral otherwise.
		\end{enumerate}
		
	\end{enumerate}
	
	If $p$ is a periodic point such that all of its eigenvalues are attracting (respectively repelling), we say that $p$ itself is attracting
	(respectively repelling).

	In \cite{jonsson1998some}, Jonsson studied PCF endomorphisms of $\ptwo$ such that every critical component
	is strictly preperiodic, and such that the first return maps of the periodic components of $\pf(f)$ also have only
	strictly preperiodic critical points: such PCF endomorphisms are called \emph{2-critically finite}.
	He proved that for such maps, every periodic point is repelling, and that repelling periodic points are dense in $\ptwo$. 
	This result, together with the classical fact that PCF maps on $\rs$ can only have repelling or super-attracting periodic points, 
	raises the question of knowing which eigenvalues a periodic point of a PCF endomorphism on $\pk$ may have.
	We prove the following:

	\begin{theoB}
		Let $f: \pk \to \pk$ be a PCF endomorphism such that the periodic irreducible components of
		$\pcal(f,\pk)$ are weakly transverse (see definition \ref{def:weaklytransverse}).
		Let $p$ be a periodic point of period $l$ for $f$. Then 
		\begin{enumerate}
			\item $p$ does not have any parabolic or non-zero attracting eigenvalue
			\item either $p$ has at least one repelling eigenvalue, or else all eigenvalues are super-attracting
			(ie $Df^l(p)$ is nilpotent)
			\item if $p$ doesn't have any super-attracting eigenvalue, then $p$ is Lyapunov unstable
			and at least one of its eigenvalues is repelling.
		\end{enumerate}
	\end{theoB}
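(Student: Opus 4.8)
We first fix a periodic point $p$ of period $l$, replace $f$ by $f^l$ so that $p$ becomes a fixed point (this preserves all the hypotheses and all eigenvalue types), and study the linear map $A:=Df^l(p)$, whose eigenvalues are by definition the eigenvalues of $p$. The first step is to locate $p$ inside the post-critical set. Note that if $p\notin\pcal(f,\pk)$ then $p\notin\crit(f)$, for otherwise $p=f^l(p)\in\cv(f^l)\subseteq\pcal(f,\pk)$; in that case $A$ is invertible. In general, using that the periodic irreducible components of $\pcal(f,\pk)$ are weakly transverse (Definition~\ref{def:weaklytransverse}) together with Theorem~A (in the precise form of Theorem~\ref{th:pcfatwdcover}, which makes $f$ weakly post-critically finite all the way down), I would build a flag $\pk=V_0\supsetneq V_1\supsetneq\cdots\supsetneq V_r\ni p$ of smooth subvarieties, each invariant under a suitable iterate of $f$, where $V_i$ is an irreducible component through $p$ of the intersection of $V_{i-1}$ with a periodic component of $\pcal(f,\pk)$, and where $V_r$ is minimal, i.e. $p\notin\pcal(f^l|_{V_r},V_r)$. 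That such a minimal stratum exists uses the inclusions $\crit(f|_V)\subseteq\crit(f)\cap V$, hence $\pcal(f|_V,V)\subseteq\pcal(f,\pk)\cap V$, valid for any invariant submanifold $V$. Since $A$ preserves every $T_pV_i$, the eigenvalues of $p$ split into the \emph{tangential} ones --- the eigenvalues of $A|_{T_pV_r}$ --- and, for $i=1,\dots,r$, the \emph{$i$-th transverse} ones --- the eigenvalues of the map induced by $A$ on $T_pV_{i-1}/T_pV_i$.

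Each transverse step is treated separately. Fix $i$, put $h:=f^l|_{V_{i-1}}$ and $W:=V_i$, a periodic component of $\pcal(h,V_{i-1})$. If $W$ is a component of $\crit(h,V_{i-1})$, then $h$ is branched transversally along $W$, its transverse multiplier vanishes identically on $W$, and the corresponding transverse eigenvalue of $A$ is super-attracting. Otherwise $W$ is the forward image of a critical component under an iterate of $h$, the transverse multiplier of $A$ at $p$ is a finite product of non-zero transverse multipliers along the cycle through $p$, and I would show its modulus is $>1$ by an inverse-branch argument: for generic $q\in W$ choose a small disk $\Delta_q$ transverse to $W$ at $q$ with $\Delta_q\setminus\{q\}\subseteq V_{i-1}\setminus\pcal(h,V_{i-1})$ --- possible because weak transversality makes the post-critical set near $q$ reduce to $W$ alone --- note that every branch of $h^{-n}$ continues holomorphically over $\Delta_q$ since $\Delta_q$ avoids $\cv(h^n)\subseteq\pcal(h,V_{i-1})$, follow the branch $\eta_n$ adapted to the cycle through $p$, and observe that the images $\eta_n(\Delta_q)$ have total volume bounded by that of $V_{i-1}$. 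A transverse multiplier of modulus $\le 1$ at $p$ would then force $\eta_n(\Delta_q)$ to have conformal radius tending to infinity while avoiding $\pcal(h,V_{i-1})$; but $(V_{i-1},\mathcal{O}_{\pk}(1)|_{V_{i-1}},h)$ is a polarized dynamical system with $h^*\mathcal{O}(1)\cong\mathcal{O}(1)^{\otimes e}$, $e\ge 2$, carrying a canonical equilibrium current with continuous local potentials, and this rules out arbitrarily large holomorphic disks avoiding the post-critical divisor. (Alternatively one may invoke Ueda's analysis of the normal dynamics along periodic post-critical components.)

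The tangential eigenvalues are governed by $g:=f^l|_{V_r}$ on $Y:=V_r$: a weakly post-critically finite branched covering with fixed point $p\notin\pcal(g,Y)$, hence a local biholomorphism at $p$, so $A|_{T_pY}$ is invertible. I would first exclude attracting and parabolic eigenvalues. If $A|_{T_pY}$ had one, there would be a periodic Fatou component $U$ of $g$ carrying orbits converging to $p$ (an immediate attracting basin, respectively an attracting petal). Since $\pcal(g,Y)$ is closed and forward invariant while $p\notin\pcal(g,Y)$, any orbit in $U\cap\pcal(g,Y)$ would force $p\in\overline{\pcal(g,Y)}=\pcal(g,Y)$, so $U\cap\pcal(g,Y)=\emptyset$, hence $U\cap\crit(g^n,Y)=\emptyset$ for all $n$ and $g|_U:U\to U$ is an unbranched covering; as $g^n\to p$ on $U$ and covering maps inject on $\pi_1$, the group $\pi_1(U)$ is trivial, so $g|_U$ is a biholomorphism and every branch of $g^{-n}$ is a globally defined self-biholomorphism of $U$. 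The existence of such a $U$ --- on which a degree-$\ge 2$ polarized endomorphism acts injectively, with all backward branches defined because they avoid $\pcal(g,Y)$ --- is then contradicted by the expansion forced by the polarization (for instance the equilibrium measure of $g$ has Lyapunov exponents $\ge\tfrac12\log e>0$). A variant of this analysis excludes also the possibility that all eigenvalues of $A|_{T_pY}$ lie on the unit circle: a rotation-type fixed point would produce a rotation domain Fatou component whose recurrence is incompatible with the post-critically finite structure, and a Cremer-type fixed point is excluded by a local orbifold-metric argument showing that $g$ is expanding near $p$. Hence, when $\dim V_r>0$, at least one tangential eigenvalue has modulus $>1$.

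Combining these, $p$ has no attracting or parabolic eigenvalue, which is statement (1); and $A$ is either nilpotent --- when $V_r=\{p\}$ and every transverse eigenvalue vanishes --- or has at least one eigenvalue of modulus $>1$ (coming either from a non-critical transverse step or from the tangential analysis), which is statement (2). If $p$ has no super-attracting eigenvalue then $A$ is invertible, so either $\dim V_r>0$ and the tangential analysis gives a repelling eigenvalue, or $V_r=\{p\}$ and every eigenvalue of $A$ is a non-zero transverse eigenvalue, hence repelling; in both cases $p$ has a repelling eigenvalue, its local unstable manifold is non-trivial, and $p$ is Lyapunov unstable, which is statement (3). The main obstacle is the tangential analysis carried out without Kobayashi hyperbolicity of $\pk\setminus\pcal(f,\pk)$: one must extract genuine contradictions from Fatou components avoiding the post-critical set, or from rotation/Cremer behaviour at $p$, using only the polarized and post-critically finite structure; a second technical point is that when the eigenvalues of $A$ have mixed moduli the stable and center-stable manifolds one works with are only real-analytic, so the covering and equilibrium-current arguments must be adapted to that setting.
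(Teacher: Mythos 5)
Your stratification skeleton (descending through the tower of Theorem \ref{th:pcfatwdcover} and splitting the spectrum of $Df^l(p)$ along invariant strata through $p$) matches the paper's, but the two engines you propose for the actual eigenvalue estimates do not work, and the tool the paper really runs on --- Ueda's normality theorem for lifts by iterates (Theorem \ref{th:normalitylift}) --- is absent. First, in your tangential analysis you pass from ``$A|_{T_pY}$ has an attracting or parabolic eigenvalue'' to ``there is a periodic Fatou component $U$ carrying orbits converging to $p$''. This fails precisely in the cases the theorem must handle: if the spectrum is mixed (say one attracting and one repelling eigenvalue), $p$ is a saddle, there is no immediate basin and no open petal, and no Fatou component need accumulate on $p$; so your covering/polarization contradiction never gets off the ground. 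The paper instead takes inverse branches $g_n$ of $g^n$ fixing $p$, defined on a fixed neighborhood because $p$ avoids $\pf(g,Y)$, and uses normality of $\{g_n\}$ (Lemma \ref{lem:notinpf}): boundedness of $Dg_n(p)=Df^l(p)^{-n}|_{T_pY}$ kills non-zero attracting eigenvalues regardless of the rest of the spectrum and yields Lyapunov instability; if moreover no eigenvalue is repelling, boundedness forces the derivative to be unitary, hence linearizable (Fornaess--Sibony), producing a rotation domain and contradicting Corollary \ref{coro:pcfnorot}. Your proposed substitute for the all-neutral case (``Cremer-type excluded by a local orbifold-metric expansion'') is unsupported --- no Kobayashi hyperbolicity is assumed in Theorem B, and indeed the paper explicitly cannot exclude irrationally neutral eigenvalues, only that they cannot exhaust the spectrum. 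The same normality mechanism, not the unstable-manifold theorem, is what gives item (3): with the paper's definition (Lyapunov stability of a local inverse), one repelling eigenvalue alone does not imply Lyapunov instability when neutral directions are present.

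Second, your transverse analysis at the intermediate strata --- exactly where $p$ \emph{lies on} the post-critical set $W=V_i$ of $h=f^l|_{V_{i-1}}$ --- is unsound. The multiplier you need to control is the normal eigenvalue of $Dh(p)$ at $p$; estimating inverse branches on a transverse disk at a \emph{generic} point $q\in W$ says nothing about $p$ (the normal multiplier is not constant along $W$ and $p$ is not in the orbit of $q$), your claim that it is non-zero in the non-critical case is unjustified (another critical component may pass through $p$), and the assertion that the equilibrium current of a polarized endomorphism forbids arbitrarily large holomorphic disks avoiding the post-critical divisor is false as stated (Fatou components provide such disks). The paper's route through this case is different and is the key inductive idea: by induction $p$ is Lyapunov \emph{unstable} for the restriction of $f$ to the post-critical component through $p$, and Lemma \ref{lem:brinverse} uses exactly that instability to continue inverse branches of the ambient stratum map to a full neighborhood of $p$ in $V_{i-1}$, after which Lemma \ref{lem:notinpf} is applied to all of $T_pV_{i-1}$ at once --- no separate normal-multiplier estimate ever occurs. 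Finally, weak transversality is needed in the paper not merely for smoothness of intersections but at singular points of the post-critical set, where Lemma \ref{lem:pairwisetransverse} gives $T_pL\subset T_pP_1+T_pP_2$ and reduces the spectrum to the deeper strata; your one-component-per-step flag never exploits the hypothesis this way, so that case too falls back on the unsound transverse argument. These are genuine gaps, not technical adjustments of the kind you flag in your last sentence.
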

	
	See Definition \ref{def:lyapunstable} for a definition of the notion of Lyapunov unstability.
	We could not exclude the possibility of a periodic point having some irrationnally neutral eigenvalues. To the best
	of our knowledge, no example of PCF endomorphism of $\pk$ with a periodic point having such an eigenvalue is known
	(it is known that it cannot occur in dimension 1).
	It would be interesting to know whether such examples exist or not. \\

	Lastly, we study the Fatou dynamics of PCF endomorphisms of $\pk$. 
	For an endomorphism $f: \pk \to \pk$, the Fatou set is the largest open subset of 
	$\pk$ on which the iterates $\{f^n, n \in \N \}$ form a normal family. We denote it by $\fatou(f)$.
	Its complement is the Julia set $\julia(f)=\julia_1(f)$. Alternatively, one can construct
	a naturel $(1,1)$ positive closed current $T_f$ called the Green current of $f$, whose
	support is exactly $\julia(f)$, and define a stratification of Julia sets 
	$\julia_m(f)$ (with $1 \leq m \leq k$) as the support of $\bigwedge_{i=1}^m T_f$.
	As $m$ increases, $\julia_m(f)$ decreases and the dynamics on $\julia_m(f)$ become 
	more chaotic in some sense. In \cite{de2005phenomene}, de Thélin analysed the structure 
	of $T_f$ on $\julia_1(f) \backslash \julia_2(f)$ for PCF endomorphisms of $\ptwo$: he 
	proved that in that case $T_f$ is laminar, something that is not true for 
	general endomorphisms of $\ptwo$. Our next result is focuses on the dynamics 
	in the Fatou set, outside of $\julia_1(f)$.
	
	 We call Fatou component a connected component of the Fatou set.
	For more background on Fatou-Julia theory, see
	for example \cite{dinh2010dynamics}.

	In \cite{fornaess1994complex}, Fornaess and Sibony proved that if $f: \ptwo \to \ptwo$ is PCF and 
	$\ptwo \backslash \pf(f)$ is Kobayashi hyperbolic, then the only Fatou components 
	of $f$ are basins of super-attracting cycles. Using methods from 
	\cite{ueda1998critical}, Rong was able in \cite{rong2008fatou} to remove the assumption of Kobayashi 
	hyperbolicity, still in the case of dimension 2.
	
	We give another partial generalization of this result, that is available in any dimension
	but requires Kobayashi hyperbolicity and a mild transversality condition on the irreducible
	components of the post-critical set:

	\begin{theoC}
		Let $f: \pk \to \pk$ be a PCF endomorphism, and let $L_i, i  \in I$ be the irreducible components
		of its post-critical set $\pf(f,\pk)$. Assume that
		\begin{enumerate}
			\item $(L_i)_{i \in I}$ is weakly transverse
			\item $\pk \backslash\pf(f,\pk)$ is Kobayashi hyperbolic.
		\end{enumerate}
		Then the Fatou set of $f$ is either empty or a finite union of basins of super-attracting cycles,
		whose points are zero-dimensional intersections of components $L_i$.
	\end{theoC}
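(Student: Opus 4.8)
The plan is to combine the Kobayashi‑hyperbolicity argument of Fornaess--Sibony \cite{fornaess1994complex} with Theorems~A and~B and an induction on dimension. Write $V:=\pk\setminus\pf(f,\pk)$. Since $f(\pf(f,\pk))\subseteq\pf(f,\pk)$ we have $f^{-1}(V)\subseteq V$, and since every critical value of $f$ lies in $\pf(f,\pk)$ the restriction $f\colon f^{-1}(V)\to V$ is an unramified covering; composing the resulting infinitesimal Kobayashi isometry with the distance--non‑increasing inclusion $f^{-1}(V)\hookrightarrow V$ shows that $f$ does not decrease the Kobayashi metric $k_V$ on $f^{-1}(V)$, with strictly positive and (on relatively compact sets) uniform expansion as soon as $f^{-1}(V)\subsetneq V$. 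Fix a Fatou component $\Omega$; by normality of $\{f^n|_\Omega\}$ and compactness of $\pk$, let $g\colon\Omega\to\pk$ be a locally uniform limit of a subsequence $(f^{n_j})_j$.

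\emph{Step 1: limit maps are constant with value in $\pf(f,\pk)$.} I would first prove $g(\Omega)\subseteq\pf(f,\pk)$. Suppose not, and choose a ball $B$ with $\overline B\subset\Omega$ and $\overline{g(B)}\subset V$, so $f^{n_j}(B)\subset V$ for all large $j$. The useful elementary fact is that $f^t(x)\in\pf(f,\pk)$ implies $f^s(x)\in\pf(f,\pk)$ for all $s\ge t$; hence if $f^{n_j}(B)\subset V$ for two consecutive large indices then all intermediate iterates of $B$ lie in $V$ as well, so in fact $f^n(B)\subset V$ for \emph{every} large $n$. Thus for a suitable $N$ the forward orbit of the open set $K_0:=f^{N}(B)$ stays in $V$, each $f^n(K_0)$ lies in $f^{-1}(V)$, and therefore $n\mapsto\mathrm{diam}_{k_V}(f^n(K_0))$ is non‑decreasing; along the subsequence it converges to $\mathrm{diam}_{k_V}(g(B))$, which is finite since $\overline{g(B)}\subset V$. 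If $g$ were constant its value would have to lie outside $V$ (otherwise the $k_V$‑diameters converge to $0$, hence, being non‑decreasing, vanish identically, forcing $f^n$ constant on $B$, which is absurd), i.e.\ it lies in $\pf(f,\pk)$. The remaining possibility --- a non‑constant $g$ with image in $V$, i.e.\ rotation‑domain type behaviour --- is ruled out via the Kobayashi hyperbolicity of $V$ as in \cite{fornaess1994complex}: when $V$ is (complete) hyperbolic the orbit $\bigcup_nf^n(K_0)$ is relatively compact in $V$, the expansion of $k_V$ by $f$ along it is then uniformly strict, and the $k_V$‑diameters would diverge, contradicting finiteness. So every subsequential limit of $\{f^n|_\Omega\}$ is constant with value in $\pf(f,\pk)$; by standard Fatou‑component arguments the set of these values is a finite, $f$‑invariant set, and $\Omega$ is attracted to it.

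\emph{Steps 2 and 3: descent via Theorem~A, induction, and identification of the attractor.} Since $\pf(f,\pk)=\bigcup_{i\in I}L_i$ has finitely many irreducible components and $f$ maps components into components, the induced map on components is eventually periodic, so the $\omega$‑limit set of $\Omega$ meets a periodic component. Replacing $f$ by an iterate and $\Omega$ by a suitable component of a forward image $f^t(\Omega)$, assume $f(L_{i_0})=L_{i_0}$ with $L_{i_0}$ containing a limit value $p$ of Step~1. Weak transversality makes the relevant intersections among the $L_i$ smooth, so Theorem~A --- in the precise form of Theorem~\ref{th:pcfatwdcover} --- applies to the return map $h:=f|_{L_{i_0}}\colon L_{i_0}\to L_{i_0}$: it is weakly post‑critically finite all the way down, $\pf(h)\subseteq\pf(f,\pk)\cap L_{i_0}$ (so $L_{i_0}\setminus\pf(h)\supseteq L_{i_0}\cap V$ is Kobayashi hyperbolic), and the components of $\pf(h)$, which are among the intersections $L_i\cap L_{i_0}$, are again weakly transverse in $L_{i_0}$. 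Thus the situation of Theorem~C reproduces itself one dimension down, and one iterates Steps~1--2 inside $L_{i_0}$, and so on. At each stage the ambient variety is a periodic component of the post‑critical set of the current return map, hence of strictly smaller dimension --- a genuinely branched self‑cover has a proper post‑critical subvariety, while the unbranched (merely weakly PCF) case is terminal, the Kobayashi hyperbolicity together with Theorem~B forbidding non‑trivial recurrence there --- so the dimension strictly drops and after finitely many steps the limit set is a single periodic cycle $\{p_0,\dots,p_{m-1}\}$, each $p_r$ a zero‑dimensional intersection of components $L_i$. Since $\Omega$ is then attracted to this cycle, $p_0$ is topologically attracting for $f^m$, so $Df^m(p_0)$ has no repelling eigenvalue; by Theorem~B(2) it is nilpotent, i.e.\ the cycle is super‑attracting. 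Hence $\Omega$ is a basin of a super‑attracting cycle supported on zero‑dimensional intersections of the $L_i$; there are only finitely many such intersection points, hence only finitely many such cycles, and their basins exhaust $\fatou(f)$, which is therefore empty or a finite union of such basins.

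\emph{Main obstacle.} The crux is the inductive mechanism of Steps~2--3: propagating \emph{both} hypotheses --- Kobayashi hyperbolicity of the complement of the post‑critical set, and weak transversality of its components --- to the first return maps on periodic components, and disposing of the unbranched (weakly PCF) case at each level. This is precisely where the precise form of Theorem~A (Theorem~\ref{th:pcfatwdcover}) and Theorem~B do the essential work; in particular one must identify $\pf(h)$ with $\pf(f,\pk)\cap L_{i_0}$ (or at least keep the complement hyperbolic) and keep the intersection combinatorics under control. A secondary, more technical, point is the rigorous exclusion in Step~1 of non‑constant limit maps with image in $V$: this needs the Kobayashi geometry of $V$ (completeness, or hyperbolic embeddedness in $\pk$), is where the hyperbolicity assumption is genuinely used, and --- being carried out for $k=2$ in \cite{fornaess1994complex} --- must be recast to work in arbitrary dimension.
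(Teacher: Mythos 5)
Your sketch has a genuine gap at its central step, and the inductive mechanism you yourself flag as ``the main obstacle'' is not actually supplied. In Step 1 you conclude that \emph{every} subsequential limit $g$ of $(f^{n}|_\Omega)$ is constant, but your dichotomy only treats limits whose image meets $V=\pk\backslash\pf(f)$: you rule out a constant value in $V$ and (modulo the issues below) a non-constant image in $V$, and then silently discard the case of a \emph{non-constant} limit with image inside $\pf(f)$. That case is precisely the heart of the theorem: a priori the image could be a positive-dimensional piece of a periodic post-critical component on which the return map has non-trivial (e.g.\ rotation-like) Fatou behaviour. The paper's proof spends most of its effort exactly here: using weak transversality through Lemma \ref{lem:kobtan} together with the Kobayashi expansion of Step 1 it shows that any Fatou limit $h$ is a \emph{submersion} onto the intersection stratum $M_h$ of components of $\pf(f)$, that the derivatives of $f^n$ along $M_h$ are bounded on $h(U)$, hence that $h(U)$ is an open subset of the Fatou set of the return map on the periodic stratum, and then it drops the rank of composed limit maps step by step using the absence of rotation domains (Corollary \ref{coro:pcfnorot}, which rests on Theorem \ref{th:pcfatwdcover} and Ueda's normality theorem \ref{th:normalitylift}). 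Your proposal never uses weak transversality beyond invoking Theorem A, so this whole mechanism is missing.

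The induction of your Steps 2--3 also does not close. You propagate the hypotheses to $h=f|_{L_{i_0}}$ by asserting that $L_{i_0}\setminus\pf(h)\supseteq L_{i_0}\cap V$ is Kobayashi hyperbolic; but $L_{i_0}\subset\pf(f)$, so $L_{i_0}\cap V=\emptyset$ and this gives nothing, and there is no reason the complement of $\pf(h)$ in $L_{i_0}$ inherits hyperbolicity from the hypothesis on $\pk\setminus\pf(f)$. The paper deliberately avoids needing hyperbolicity at lower levels: it replaces it by the boundedness of derivatives along strata (coming from the top-level expansion plus Lemma \ref{lem:kobtan}) and by Ueda-type normality of inverse lifts. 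Moreover, to run any induction inside $L_{i_0}$ you would need an open subset of $L_{i_0}$ on which the restricted iterates are normal and which captures the $\omega$-limit of $\Omega$; you never construct one (this is what the paper's submersion statement provides). Finally, several analytic claims in Step 1 are unjustified as stated: monotonicity of $k_V$-diameters needs care because the covering inequality only compares $k_V(f(x),f(y))$ with the distance from $x$ to the \emph{nearest} preimage of $f(y)$; ``uniformly strict'' expansion when $f^{-1}(V)\subsetneq V$ does not follow from the inclusion alone; and you invoke completeness of the hyperbolic metric on $V$ (to get relative compactness of the orbit) which is not among the hypotheses. The concluding use of Theorem B (no repelling eigenvalue, hence nilpotent differential, hence super-attracting) does agree with the paper, but the route to a constant, periodic limit value that is a zero-dimensional intersection of the $L_i$ is not established by your argument.
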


	Note that this in particular implies that there are only finitely many periodic Fatou components,
	something that it always true for endomorphisms of $\rs$ but not necessarily so for endomorphisms of 
	$\pk$ with $k \geq 2$, and 
	that every Fatou component is preperiodic (the first examples of endomorphisms of $\pk$ having 
	a non-preperiodic Fatou component have been constructed in \cite{astorg2014two}).\\

	Finally, Koch has given in \cite{koch2013teichmuller} a systematic method for constructing 
	examples of PCF endomorphisms of $\pk$. This is interesting, as constructing non-trivial PCF endomorphisms
	is not easy. Indeed, most examples in the literature can be recovered from this construction.
	The construction starts with a purely topological object called 
	a \emph{topological polynomial}, and gives a holomorphic PCF endomorphism of $\pk$ whose dynamics
	is related to the action of that topological polynomial on a Teichmüller space. 
	In Section \ref{sec:koch}, we recall Koch's results and we show that Theorems A, B and C
	apply to her examples.

	\subsection*{Acknowledgements}I would like to thank Sarah Koch for introducing me to these questions, and for many helpful discussions.
	I also thank Xavier Buff for inviting me to the University of Toulouse and for helpful discussions. 
	This article was written almost entirely while at the University of Michigan.

	\subsection*{Outline}In Section 2, we introduce a weak notion of transversality and we prove some lemmas
	that will be later required. In Section 3, we define properly the notions of weakly PCF, PCF all the way down 
	and weakly PCF all the way down, and we prove Theorem A. In Section 4, we prove a slightly generalized version of a theorem 
	of Ueda on the absence of rotation domains for PCF maps, that will be required for the proof of Theorems B and C.
	In Section 5, we prove Theorem B, and in Section 6, we prove Theorem C. Finally, in Section 7, we discuss how our 
	results apply to the examples from \cite{koch2013teichmuller}.
	
	\subsection*{Notations}
	In the rest of the article, if $L$ is an analytic subset of a complex manifold $X$, we will denote 
	the singular part of $L$ by $\sing(L)$ and its regular part by $ \reg(L)$.

	\section{Weak transversality and Kobayashi metric}

	\begin{defi}\label{def:weaklytransverse}
		Let $X$ be a complex manifold, and let $L_i, i \in I$ be a finite set of irreducible
		hypersurfaces of $X$. We say that the $(L_i)_{i \in I}$ are \emph{weakly transverse}
		if the following holds: for any $x_0 \in \bigcup_{i \in I} L_i$, if $J=\{i \in I, x_0 \in L_i\}$ and
		if locally near $x_0$ the sets $L_i$ can be written $L_i=\{P_i=0\}$ (where the $P_i$ 
		are holomorphic functions defined locally near $x_0$), then $x \mapsto (P_j(x))_{j \in J}$ has constant rank near $x_0$.
	\end{defi}
	
	Note that in the particular case where $x \mapsto (P_j(x))_{j \in J}$ is surjective,
	the Implicit Function Theorem implies that $\bigcup_{i \in I} L_i$ has simple normal crossings.
	This is a weaker requirement: for example any finite collection of hyperplanes in a projective space 
	will be weakly transverse in the sense of the above definition.
	
	A straightforward consequence of the Constant Rank Theorem is that if $(L_i)_{i \in I}$
	are weakly transverse, then for any $J \subset I$, $\bigcap_{j \in J} L_j$ is a smooth submanifold
	(though not necessarily of codimension $\card J$), of tangent space at $x_0$ given 
	by $\bigcap_{j \in J} T_{x_0} L_j$.
	
	Let us begin by proving two lemmas that will be used in later sections.

	\begin{lem}\label{lem:kobtan}
		Let $X$ be a complex manifold, and let $(L_i)_{i \in I}$ be weakly transverse. 
		Let $x_0 \in \bigcup_{i \in I} L_i$ and $J=\{i \in I, x_0 \in L_i\}$. Let 
		$v \in \bigcap_{j \in J} T_{x_0} L_j$. There exists a holomorphic vector field $\xi$
		defined in a neighborhood $U$ of $x_0$, with $\xi(x_0)=v$, and such that 
		$\xi$ is uniformly bounded in the Kobayashi pseudometric of $X \backslash\bigcup_{i \in I} L_i$ on
		$U \backslash \bigcup_{i \in I} L_i$.
	\end{lem}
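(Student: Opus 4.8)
The plan is to reduce to a local model where the hypersurfaces $L_j$, $j\in J$, are coordinate hyperplanes, use that $v$ is tangent to all of them to locate the "missing directions," and then build $\xi$ from the Kobayashi-bounded vector fields on a punctured polydisc. By the Constant Rank Theorem (applied to $x\mapsto (P_j(x))_{j\in J}$, which has constant rank $r$ near $x_0$), one can choose holomorphic coordinates $(z_1,\dots,z_n)$ centered at $x_0$ and target coordinates so that $P_j$, after this change, depends only on $(z_1,\dots,z_r)$; more precisely the map becomes $(z_1,\dots,z_n)\mapsto(z_1,\dots,z_r,0,\dots,0)$. Thus each $L_j$ locally is $\{\ell_j(z_1,\dots,z_r)=0\}$ for some holomorphic $\ell_j$, and in fact after a further linear change we may assume $L_j=\{z_{i(j)}=0\}$ for indices $i(j)\in\{1,\dots,r\}$ (the hypersurfaces passing through $x_0$ among a weakly transverse family, once we are in the constant-rank picture, are pulled back from coordinate hyperplanes in the $\C^r$-factor). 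Write the local domain as a polydisc $U=\Delta^n$ and let $S\subset\{1,\dots,r\}$ be the set of those coordinate indices, so that $\bigcup_{j\in J}L_j\cap U=\{z_i=0\text{ for some }i\in S\}$.

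Next I would choose the vector field explicitly. The condition $v\in\bigcap_{j\in J}T_{x_0}L_j$ means exactly that the components of $v$ along $\partial/\partial z_i$ for $i\in S$ vanish. So I can write $v=\sum_{i\notin S} a_i\,\partial/\partial z_i$ and set
\[
\xi(z)=\sum_{i\notin S} a_i\,\frac{\partial}{\partial z_i}.
\]
This is holomorphic on $U$ and satisfies $\xi(x_0)=v$. It remains to check that $\xi$ is uniformly bounded for the Kobayashi pseudometric of $X\setminus\bigcup_{i\in I}L_i$ on $U\setminus\bigcup_{i\in I}L_i$. Since the Kobayashi pseudometric is distance-decreasing under the inclusion $U\setminus\bigcup_{i\in I}L_i\hookrightarrow X\setminus\bigcup_{i\in I}L_i$, it suffices to bound $\xi$ in the Kobayashi metric of $U\setminus\bigcup_{i\in I}L_i$, and since dropping the hypersurfaces $L_i$ with $x_0\notin L_i$ only shrinks the complement, it suffices — after possibly shrinking $U$ so that no such $L_i$ meets it — to bound $\xi$ in the Kobayashi metric of $U\setminus\{z_i=0,\ i\in S\}\cong\bigl(\Delta^{*}\bigr)^{|S|}\times\Delta^{\,n-|S|}$. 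The Kobayashi metric of a product is the maximum of the factor metrics, and the directions $\partial/\partial z_i$ with $i\notin S$ are tangent to $\Delta$-factors (either a full disc factor, or an $S$-punctured disc factor but with $i\notin S$), on which the Kobayashi metric is the Poincaré metric, bounded on compact subsets. Shrinking $U$ to a relatively compact polydisc, $\xi$ has bounded coefficients and lies in directions along disc factors, hence is uniformly Kobayashi-bounded.

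The main obstacle is the reduction in the first paragraph: the fact that, in a weakly transverse family, the hypersurfaces through $x_0$ become coordinate hyperplanes (in the constant-rank factor) after a holomorphic change of coordinates. One must be careful that although $x\mapsto(P_j)_{j\in J}$ having constant rank $r$ only gives, a priori, that each $L_j$ is a union of level sets of $r$ functions, the irreducibility of each $L_j$ and the fact that they are smooth hypersurfaces (already noted as a consequence of weak transversality) forces each $L_j\cap U$ to be a smooth hypersurface pulled back from the $\C^r$ factor; then a linear change in that factor straightens them simultaneously precisely because the differentials $dP_j(x_0)$, restricted to the $\C^r$-factor, span and one can complete/rearrange them into a coordinate system — here one uses that the $P_j$ for distinct $j$ cut out distinct hypersurfaces, so their differentials at $x_0$ are pairwise non-proportional, and in the constant-rank normal form this is enough to take them as part of a coordinate system. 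Once that normal form is in place, the rest is the routine product-Kobayashi estimate sketched above.
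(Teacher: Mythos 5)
Your reduction to coordinate hyperplanes is where the argument breaks. From the Constant Rank Theorem you correctly get coordinates in which each $P_j$, $j\in J$, depends only on $(z_1,\dots,z_r)$, so each $L_j$ is pulled back from a hypersurface in the $\C^r$-factor; but the further claim that a holomorphic change of coordinates makes every $L_j$ equal to some $\{z_{i(j)}=0\}$ with $i(j)\in\{1,\dots,r\}$ is false in general, and it fails exactly in the situations the lemma is designed to cover. Weak transversality is strictly weaker than simple normal crossings: take $X=\C^3$, $L_1=\{z_1=0\}$, $L_2=\{z_2=0\}$, $L_3=\{z_1=z_2\}$, and $x_0$ on the $z_3$-axis. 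Then $\Phi=(z_1,z_2,z_1-z_2)$ has constant rank $r=2$, so the family is weakly transverse, and $\bigcap_j T_{x_0}L_j=\C\,\partial/\partial z_3\neq 0$; yet three distinct hypersurfaces cannot be straightened into the two coordinate hyperplanes of the $\C^2$-factor (a biholomorphism is injective, so it cannot send two distinct lines through the origin into the same axis). Your justification --- that the $dP_j(x_0)$ are pairwise non-proportional and can therefore be taken as part of a coordinate system --- is precisely what fails: pairwise non-proportional differentials need not be jointly independent, and when $\card J>r$ they cannot be. Consequently the identification of $U\setminus\bigcup_{i}L_i$ with $(\Delta^{*})^{|S|}\times\Delta^{\,n-|S|}$, and of $\bigcap_j T_{x_0}L_j$ with $\{v: v_i=0,\ i\in S\}$, have no basis; as written your proof only treats the simple normal crossing case, whereas the paper emphasizes (e.g.\ many hyperplanes through a common point) that weak transversality is meant to go beyond it.

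The straightening is, however, unnecessary, and your construction of $\xi$ survives without it. In the constant-rank coordinates one has $\bigcup_{j\in J}L_j\cap U=Z\times\Delta^{\,n-r}$ for some hypersurface $Z$ of the $\C^r$-factor (not necessarily a union of coordinate hyperplanes), $v$ lies in $\ker D\Phi(x_0)=\mathrm{span}(\partial/\partial z_{r+1},\dots,\partial/\partial z_n)$ since $T_{x_0}L_j\subset\ker dP_j(x_0)$, and the constant field $\xi=v$ is tangent to the $\Delta^{\,n-r}$-factor. From there you can conclude either by your own product estimate with $(\Delta^r\setminus Z)\times\Delta^{\,n-r}$ in place of the punctured polydisc (a constant vector along the second factor has Kobayashi norm bounded on a relatively compact subset, and the inclusion into $X\setminus\bigcup_i L_i$ is distance-decreasing), or as the paper does: since each $P_j$ is constant along the flow of $\xi$, the flow provides holomorphic discs of a uniform radius $r_0$ in $X\setminus\bigcup_i L_i$ through every point of $U\setminus\bigcup_i L_i$ with derivative $\xi$, whence $\rho_K(x;\xi(x))\le 1/r_0$.
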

	
	\begin{proof}
		For $j \in J$, let $P_j$ be a holomorphic function defined in the neighborhood of $x_0$ 
		such that locally near $x_0$, $L_j=\{P_j=0\}$.
		Let $\Phi: x \mapsto (P_j(x))_{j \in J}$. Then by weak transversality, $\Phi$ has constant rank,
		so in local coordinates it may be written 
		$\Phi(z_1, \ldots, z_k)=(z_1, \ldots, z_m, 0, \ldots ,0)$ (here $k=\dim X$ and 
		$m=\mathrm{codim\,} \bigcap_{j \in J} L_j$ so $k<m$).
		Since $v \in \ker D\Phi(x_0)$, 
		$v$ is of the form $v = \sum_{j=m+1}^{\card J} \lambda_i \frac{\partial}{\partial z_i}$,
		for some $\lambda_i \in \C$. 
		Now define $\xi:=\sum_{j=m+1}^{\card J} \lambda_i \frac{\partial}{\partial z_i}$.
		Clearly, $\xi$ is a germ of holomorphic vector field at $x_0$, and moreover 
		$\xi(x_0)=v$. Now let us prove that it is locally uniformly bounded in the 
		Kobayashi pseudo-metric of $X- \bigcup_{j \in J} L_j$.
		
		Let $V$ be a small neighborhood of $x_0$ in which $\xi$ is well-defined. Let $U$ be 
		a relatively compact open subset of $V$.
		There exists $r>0$ such that the complex flow  $(\phi_t)_t$ of $\xi$ is well-defined for initial
		conditions  $x \in U$ 
		for times $|t|<r$. It follows from the construction of $\xi$ that 
		for all $x \in V$, $D\Phi(\xi)=0$. Therefore, if $x \in U - \bigcup_{i \in I} L_i$, then 
		for all $|t|<r$, $\phi(t,x) \notin \bigcup_{i \in I} L_i$.
		Moreover, from the fact that  
		$\frac{d}{dt}_{|t=0} \phi(t,x) = \xi(x)$ and that $t \mapsto \phi(t,x)$ is holomorphic and 
		defined on the 
		disk of radius $r$, the definition of the Kobayashi pseudo-metric implies that 
		$$\rho_K(x;\xi(x)) \leq \frac{1}{r}.$$
	\end{proof}

	\begin{lem}\label{lem:pairwisetransverse}
		Let $X$ be a complex manifold, and $(L_i)_{i \in I}$ be a family of weakly hypersurfaces.
		Let $J \subset I$, and $i_1, i_2 \in I \backslash J$ with $i_1 \neq i_2$. Let 
		\begin{center}
			$L:=\bigcap_{j \in J} L_j, \quad
			M_1:= \bigcap_{j \in J \cup \{i_1\}} L_j, \quad 
			M_2:= \bigcap_{j \in J \cup \{i_2\}} L_j.$
		\end{center}
		Then if $M_1 \neq M_2$ and $p \in M_1 \cap M_2$, we have
		$T_p L \subset T_p M_1 + T_p M_2$.
	\end{lem}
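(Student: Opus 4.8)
The plan is to reduce the statement to the tangent spaces at $p$, exploiting the consequence of weak transversality recorded just after Definition~\ref{def:weaklytransverse}: for any finite $S\subset I$ with $p\in L_s$ for every $s\in S$, the intersection $\bigcap_{s\in S}L_s$ is a smooth submanifold with $T_p\big(\bigcap_{s\in S}L_s\big)=\bigcap_{s\in S}T_pL_s$. Set $V:=T_pL=\bigcap_{j\in J}T_pL_j$ and, for $a=1,2$, let $H_a:=T_pL_{i_a}$, which is a hyperplane of $T_pX$ since $L_{i_a}$ is smooth at $p$. Then $T_pM_a=V\cap H_a$, so the lemma amounts to the linear statement $V\subseteq(V\cap H_1)+(V\cap H_2)$. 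Each $V\cap H_a$ has codimension $0$ or $1$ in $V$. If $V\subseteq H_a$ for some $a$ then $V=V\cap H_a$ and we are done; otherwise $V\cap H_1$ and $V\cap H_2$ are both hyperplanes of $V$, and if they are distinct their sum is $V$ and we are again done. So the only case to rule out is the \emph{degenerate} one, where $W:=V\cap H_1=V\cap H_2$ is a hyperplane of $V$; I would then show this case forces $M_1$ and $M_2$ to coincide near $p$, which is excluded by $M_1\neq M_2$.

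Assume we are in the degenerate case. Then $T_pM_1=T_pM_2=W$, and since $M_1$ and $M_2$ are smooth, $\dim M_1=\dim M_2=\dim W=\dim L-1$. Also $M_1\cap M_2=\bigcap_{j\in J\cup\{i_1,i_2\}}L_j$ is smooth, with $T_p(M_1\cap M_2)=(V\cap H_1)\cap H_2=W$ (using $W=V\cap H_2\subseteq H_2$), so $\dim(M_1\cap M_2)=\dim W$ too. Hence, in a neighborhood of $p$, the three sets $M_1$, $M_2$ and $M_1\cap M_2$ are connected manifolds of the same dimension $\dim L-1$.

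I would then fix a ball $B\ni p$ small enough that $M_1\cap B$, $M_2\cap B$ and $(M_1\cap M_2)\cap B$ are connected. Being a submanifold of $M_1\cap B$ of the same dimension, $(M_1\cap M_2)\cap B$ is open in $M_1\cap B$; it is also closed in $M_1\cap B$, equal to $(M_1\cap B)\cap L_{i_2}$ with $L_{i_2}$ closed; and it contains $p$. By connectedness $(M_1\cap M_2)\cap B=M_1\cap B$, i.e. $M_1\cap B\subseteq M_2$, and symmetrically $M_2\cap B\subseteq M_1$, so $M_1$ and $M_2$ coincide on $B$. Since $M_1$ and $M_2$ are smooth analytic sets, the irreducible component of each through $p$ is unique, and these components are therefore equal; in particular $M_1$ and $M_2$ have the same germ at $p$, contradicting $M_1\neq M_2$. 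Hence the degenerate case does not occur, and $T_pL\subseteq T_pM_1+T_pM_2$.

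The real content, and the only place $M_1\neq M_2$ is used, is this last passage from the degenerate linear picture to an actual coincidence of $M_1$ and $M_2$: the linear statement alone is false (two smooth hypersurfaces may be tangent without being equal), and the argument works only because weak transversality makes $M_1$, $M_2$ and $M_1\cap M_2$ all smooth with tangent spaces given by the naive intersections, which is exactly what powers the dimension count $\dim(M_1\cap M_2)=\dim M_1=\dim M_2$. Everything else --- the linear-algebra dichotomy and the open-and-closed connectedness argument --- is routine.
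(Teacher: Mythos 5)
Your proof is correct and follows essentially the same route as the paper: reduce to the degenerate case $T_pM_1=T_pM_2$, then use weak transversality to show that $M_1\cap M_2$ has the same local dimension as $M_1$ and $M_2$ near $p$, so that the two coincide near $p$, contradicting $M_1\neq M_2$. The only (cosmetic) difference is that the paper obtains the dimension count by applying the constant rank theorem directly to the joint map $x\mapsto (P_j(x))_{j\in J\cup\{i_1,i_2\}}$, whose rank at $p$ is $m+1$ in the degenerate case, whereas you invoke the smoothness-and-tangent-space consequence of weak transversality recorded after Definition~\ref{def:weaklytransverse} together with an open-and-closed connectedness argument.
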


	\begin{proof}
		Suppose $p \in M_1 \cap M_2$ and we do not have 
		$T_p L \subset T_p M_1 + T_p M_2$. We can assume that both of the $M_i$ have codimension 1 in $L$, for otherwise
		one of them must be equal to $L$ and we are done.  This means 
		that $T_p M_1 = T_p M_2$. Let $U$ be a small neighborhood of $p$, and for any $i \in I$, let 
		$P_i : U \to \C$ be a holomorphic function such that $L_i \cap U = \{P_i=0\}$.
		Let 
		\begin{align*}
		\phi:\, &U \to \C^{J \cup \{i_1,i_2\}}\\
		&x \mapsto (P_j(x))_{j \in J \cup \{i_1,i_2\}}
		\end{align*}
		
		Let $m$ be the codimension of $L$ in $\pk$. Since $T_p M_1=T_p M_2$, the rank of $D \phi(p)$ is $m+1$. By definition of weak 
		transversality, up to reducing $U$, the rank of $D\phi$ is $m+1$ on $U$. But this implies that 
		$\codim M_1 \cap M_2 = m+1$, which is impossible since $M_1$ and $M_2$ both have 
		(pure) codimension $m+1$ and $M_1 \neq M_2$.
	\end{proof}

	\section{Post-critical finiteness all the way down}

	Recall the notion of (analytic) branched cover:
	
	\begin{defi}
		Let $X, Y$ be analytic sets, and $f: X \to Y$ an analytic map. We say that 
		$f$ is a branched cover if $f$ is open, surjective and proper.
	\end{defi} 
	 
	It is then known that there is a unique closed minimal hypersurface $D$ in $Y$ 
	(possibly empty) such that $f: X - f^{-1}(D) \to Y-D$ is a covering map.
	We call $D$ the critical value set, denoted by $\cv(f)$. If $D$ is empty, 
	then we say that $f$ is unbranched.
	
	If $X$ and 
	$Y$ are smooth 
	complex manifolds, the set 
	$$\crit(f):=\{x \in X, Df(x) \text{ is not invertible}\}$$
	 is either
	empty or a closed analytic hypersurface of $X$. We call it the critical set of $f$, and 
	its image is exactly the critical value set of $f$.
	 
	\begin{defi}
		Let $X$ be an analytic set, and let $f: X \to X$ be a branched cover. 
		Let $\pf(f,X)$ denote $\overline{\bigcup_{n \geq 0} f^n(\cv(f))}$: $\pf(f)$ is called the
		post-critical set of $f$. We say that $f$ is post-critically finite (PCF) if $\pf(f,X)$ is non-empty and
		analytic with only finitely  many irreducible components.
	\end{defi} 
	
	If $f$ is  unbranched, then $\pf(f,X)=\emptyset$.
	When there is no ambiguity, we will just write $\pf(f)$.
	
	Note that any endomorphism of $\pk$ is a branched cover, see \cite{ueda1998critical}.
	Let us now define the notions of post-critical finiteness all the way down:

	\begin{defi}\label{def:pcfatwd}
		Let $X$ be an irreducible analytic set, and 
		$f: X \to X$ be a post-critically finite endomorphism. Let $f_0:=f$, $\Omega_0=X$, and denote by
		$P_0$ the post-critical set of $f$. Let us make the following definition inductively on $m$:
		if the restriction 
		of $f_m$ to each irreducible component of $\Omega_m$ is either unbranched or PCF, then
		\begin{itemize}
			\item $\Omega_{m+1}$ is the union of the $f_m$-periodic irreducible components of $P_{m}$,
			and $k_m$ is the least common multiple of the periods
			\item $f_{m+1}$ is the restriction of $f_m^{k_m}$ to $\Omega_{m+1}$
			\item $P_{m+1}$ is the union of the post-critical sets of $f_{m+1}$ restricted 
			to each irreducible component of $\Omega_{m+1}$
		\end{itemize}
	\end{defi}
	
	Note that if the restriction of $f_m$ to each irreducible component of $\Omega_m$ is PCF or unbranched,
	then $\Omega_{m+1}$ is either empty or an analytic set of pure codimension
	$\dim \Omega_m - 1$.

	\begin{defi}
		Using the same notations as the definition above:
		\begin{itemize}
			\item If for some $k \in \N$ the restriction of $f_m$ to each irreducible component of $\Omega_m$ is PCF  for every $m \leq k$ , then we say that $f$ is PCF of order $k+1$.
			\item If $f$ is PCF of order $\dim X$, we say that $f$ is PCF all the way down.
			\item If for all $k \leq \dim X-1$  the restriction 
			of $f_m$ to each irreducible component of $\Omega_m$ is either unbranched or PCF, then 
			we say that $f$ is \emph{weakly PCF all the way down}.
		\end{itemize}
	\end{defi}

	Note that if $f$ is PCF all the way down then it is weakly PCF all the way down, 
	and that if $f$ is weakly PCF all the way down then for all $m \leq \dim X$, $\Omega_m$ is either empty or 
	an
	analytic subset of $X$ of pure codimension $m$.
	
	\begin{theo}\label{th:pcfatwdcover}
		Let $X$ be a complex manifold and let $f: X \to X$ be a PCF branched covering. Let 
		$(L_i)_{i \in I}$ be the periodic irreducible components of $\pf(f,X)$ and assume that
		for any $J \subset I$, $\bigcap_{j \in J} L_j$ is smooth. Then:
		\begin{enumerate}
			\item $f$ is weakly PCF all the way down and the irreducible components of $\Omega_m$ are of the form 
			$L_{i_1} \cap \ldots \cap L_{i_m}$ for some $(i_1, \ldots , i_m) \in I^m$.
			\item In fact, for any $(i_1, \ldots, i_m) \in I_m$, if we set $L:= L_{i_1} \cap \ldots \cap L_{i_m}$,
			then 
			$f_m: L \to L$
			is either unbranched or PCF.
		\end{enumerate}
	\end{theo}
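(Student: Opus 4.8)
The plan is to prove the two assertions by a single induction on $m\ge 0$. The inductive statement at step $m$ will be: each irreducible component $Y$ of $\Omega_m$ is a connected component of an intersection $L_{i_1}\cap\cdots\cap L_{i_m}$ with $i_1,\dots,i_m\in I$; the map $f_m\colon Y\to Y$ is unbranched or PCF; and each irreducible component of $\pf(f_m|_Y,Y)$ is a connected component of some $L_{i_1}\cap\cdots\cap L_{i_{m+1}}$, where now $i_{m+1}$ is allowed to index \emph{any} irreducible component of $\pf(f,X)$. The case $m=0$ is clear. For the step from $m-1$ to $m$, the components of $\Omega_m$ are by definition the $f_{m-1}$-periodic components of $P_{m-1}=\bigcup_Y\pf(f_{m-1}|_Y,Y)$, so the step-$(m-1)$ statement already exhibits them as connected components of $m$-fold intersections; that the indices may be taken in $I$ follows since such a component $L$ is $f$-periodic, say of period $p$, and if $L\subseteq L_j$ then $L=f^{p}(L)\subseteq\bigcap_{t\ge 1}f^{tp}(L_j)$, the right-hand side stabilising along a subsequence to an $f$-periodic component $L_{i_0}$. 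It therefore remains to fix a component $L$ of $\Omega_m$ — which, being a component of $\pf(f_{m-1}|_Y,Y)$ for some component $Y$ of $\Omega_{m-1}$, is a connected component of an $m$-fold intersection, hence smooth, and is $f_{m-1}$-periodic in $Y$, so that $f_m|_L=g|_L$ with $g:=(f_{m-1}|_Y)^{k_{m-1}}\colon Y\to Y$ and $L$ a smooth connected $g$-invariant hypersurface of $Y$ — and to show that $f_m|_L$ is unbranched or PCF and that the components of $\pf(f_m|_L,L)$ are connected components of $(m+1)$-fold intersections.

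The main tool is the local lemma: \emph{if $h\colon Y\to Y$ is a branched covering of a complex manifold and $M\subseteq Y$ is a smooth subvariety such that $h(M)$ is smooth of the same dimension, then $\crit(h|_M)$ is contained in the union of $C\cap M$ over the irreducible components $C$ of $\crit(h,Y)$ with $M\not\subseteq C$; in particular $\cv(h|_M)\subseteq\cv(h,Y)\cap M$.} The inclusion $\cv(h|_M)\subseteq\cv(h,Y)\cap M$ is immediate from the chain rule. For the first inclusion, work near a generic point of $M$ in coordinates with $M=\{z_1=0\}$ and $h(M)=\{w_1=0\}$; the first component of $h$ then factors as $z_1^{a}u$ with $u|_M\not\equiv 0$, and a direct determinant computation gives $\det Dh=z_1^{a-1}\,\Delta$ with $\Delta|_{z_1=0}=a\,u\,\det D(h|_M)$. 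Near a generic point $u$ is a unit, so $\crit(h|_M)=\{\Delta=0\}\cap M$ there while $\{\Delta=0\}$ contains no component through $M$; a routine analytic-continuation argument globalises this on $M$. (The same computation applies to the non-self-maps $f_{m-1}|_{L^{(t)}}$ appearing below.)

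Granting the lemma, $\cv(g|_L,L)\subseteq\bigcup_{C\not\supseteq L}\bigl(g(C)\cap L\bigr)$, the union over components $C$ of $\crit(g,Y)$; each $g(C)$ is an irreducible hypersurface of $Y$ inside $\cv(g,Y)\subseteq\pf((f_{m-1}|_Y)^{k_{m-1}},Y)\subseteq\pf(f_{m-1}|_Y,Y)$, hence an irreducible component $W$ of $\pf(f_{m-1}|_Y,Y)$, which by the step-$(m-1)$ statement is a connected component of $L_{i_1}\cap\cdots\cap L_{i_{m-1}}\cap L_{j_W}$, where $L_{i_1}\cap\cdots\cap L_{i_{m-1}}\supseteq Y$. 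If $W\neq L$, write $L$ as a connected component of $L_{i_1}\cap\cdots\cap L_{i_{m-1}}\cap L_{j_L}$: when $j_W=j_L$ the sets $W$ and $L$ are distinct connected components of the smooth set $L_{i_1}\cap\cdots\cap L_{i_{m-1}}\cap L_{j_W}$, hence disjoint; when $j_W\neq j_L$ we get $W\cap L\subseteq L_{i_1}\cap\cdots\cap L_{i_{m-1}}\cap L_{j_W}\cap L_{j_L}$, which is smooth by hypothesis, so — since $\cv(g|_L,L)$ is a pure hypersurface of $L$ — its components inside $W\cap L$ are connected components of this $(m+1)$-fold intersection (the smoothness of all the intersections $\bigcap_{j\in J}L_j$ precludes any lower-codimensional component of the $(m+1)$-fold intersection passing through such a piece, as that would force one of the $L_i$ to contain $Y$). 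Iterating $g|_L$ and using forward invariance of $\pf(f,X)$ then transfers this description from $\cv(g|_L,L)$ to $\pf(g|_L,L)=\pf(f_m|_L,L)$; in particular the latter has finitely many components, so $f_m|_L$ is PCF or unbranched, which completes the induction — \emph{provided} the case $W=L$ is handled.

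That case is the main obstacle: there may be a component $C$ of $\crit(g,Y)$ with $C\not\supseteq L$ but $g(C)=L$, for which $g(C)\cap L=L$ and the inclusion above says nothing. The remedy is to trace the ramification of $g|_L$ along the $f_{m-1}$-cycle $L=L^{(0)}\to L^{(1)}\to\cdots\to L^{(k_{m-1})}=L$ of $L$. A generic point of a component of $\crit(g|_L,L)$ is carried by some iterate $(f_{m-1}|_L)^{t}$ into $\crit(f_{m-1}|_{L^{(t)}})$; by the lemma it then lies in $C'\cap L^{(t)}$ for a component $C'$ of $\crit(f_{m-1},Y)$ with $C'\not\supseteq L^{(t)}$, and its image lies in $f_{m-1}(C')\cap L^{(t+1)}$, an intersection of a \emph{post-critical} component of $f_{m-1}|_Y$ with $L^{(t+1)}$. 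Pushing forward to time $k_{m-1}$ and running the argument of the previous paragraph — finiteness and forward invariance of $\pf(f,X)$, plus smoothness of all the $\bigcap_{j\in J}L_j$ — one reaches an intersection of two \emph{distinct} post-critical components, which has the required form. The delicate point is the bookkeeping showing this process cannot remain trapped among intersections of $L$ with post-critical components whose $g$-orbit returns to $L$, and this is exactly where the full (rather than merely pairwise or weakly transverse) smoothness hypothesis enters.
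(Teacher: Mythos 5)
Your first half is sound and is essentially the paper's own route: your ``local lemma'' is the paper's Lemma on critical sets of restrictions (a critical point of $f$ restricted to a smooth invariant hypersurface $M$ lies on $M\cap C'$ for some component $C'$ of the ambient critical set with $C'\neq M$), proved by the same factorization $f_k=z_k^m g$ and determinant expansion. But the case you yourself single out --- a component $C'$ of $\crit(g,Y)$ with $C'\not\supseteq L$ and $g(C')=L$, so that $g(C')\cap L=L$ carries no information --- is not resolved by your proposal. The ``remedy'' you sketch (push forward along the cycle $L^{(0)}\to L^{(1)}\to\cdots$ until one ``reaches an intersection of two distinct post-critical components'') is a restatement of what must be proved, not an argument: at each stage the same degenerate alternative $f_{m-1}(C')\supseteq L^{(t+1)}$ can recur, and nothing you have written rules out that the forward images of a critical component of $g|_L$ form infinitely many distinct hypersurfaces of $L$, none contained in an intersection with another post-critical component. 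You acknowledge this openly (``the delicate point is the bookkeeping\ldots''), so the proof is incomplete exactly at its crux.

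The missing ingredient is Ueda's lemma (Lemma 3.5 of \cite{ueda1998critical}): for a branched cover with branch locus $D$, if $x\in\sing(f^{-1}(D))$ then $f(x)\in\sing(D)$. The paper's proof never tracks the image $g(C')$ at all; instead it notes that, by the restriction lemma, a critical component $C$ of $g|_L$ satisfies $C\subset C'\cap L$ with $C'\neq L$, and both $C'$ and $L$ are components of $g^{-1}(\cv(g,N))$, so $C\subset\sing\bigl(g^{-1}(\cv(g,N))\bigr)$. Applying Ueda's lemma to all iterates $g^n$ gives $g^n(C)\subset\sing(\pf(f_m,N))$ for every $n$; since each component of $\pf(f_m,N)$ is smooth (this is where the hypothesis that all $\bigcap_{j\in J}L_j$ are smooth is used), this singular locus is the finite union of pairwise intersections of \emph{distinct} post-critical components, so the increasing union $\bigcup_{n\leq N}g^n(C)$ stabilizes and $g|_L$ is PCF with post-critical components of the asserted form. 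This handles uniformly the case $g(C')=L$ that blocks your argument, because the image of a point lying on two distinct components of the preimage of the branch divisor is forced into the singular locus of the branch divisor regardless of where $C'$ itself maps. To complete your proof you would need to import this lemma (or prove an equivalent statement); without it the induction does not close.
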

	
	Before we prove Theorem \ref{th:pcfatwdcover}, we will need the following lemma:
	
	\begin{lem}\label{lem:critintersect}
		Let $X$ be a complex manifold, and let
		$f: X \to X$ be a branched cover. Let $C$ be a smooth irreducible 
		hypersurface of $X$, and assume that $f(C)=C$. Then the critical set of the 
		restriction map $f_{|C}: C \to C$ is included in $\bigcup_i  C \cap C_i$,
		where the $C_i$ are the irreducible components of the critical set of $f$ other than $C$.
	\end{lem}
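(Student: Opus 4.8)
The plan is to reduce to a local statement at a hypothetical critical point $p$ of $f_{|C}$ lying outside $\bigcup_i C\cap C_i$ and to reach a contradiction by comparing, along $C$, the vanishing orders of $\jac f$ and of $\jac(f_{|C})$. First I would note that such a $p$ is automatically a critical point of $f$: since $f(C)=C$ and $C$ is smooth, $Df(p)$ maps $T_pC$ into $T_{f(p)}C$, so were $Df(p)$ invertible its restriction to $T_pC$ would be an isomorphism between spaces of the same dimension, contradicting criticality of $f_{|C}$ at $p$; hence $p\in\crit(f)$. Moreover, since $C$ is smooth, every point of $C$ that is singular in $\crit(f)$ lies on the singular locus of some other component $C_i$ or on an intersection of two distinct components of $\crit(f)$, hence on some $C\cap C_i$; as $p\notin\bigcup_i C\cap C_i$, the set $\crit(f)$ is smooth near $p$ with $C$ as its only local component, i.e.\ $\crit(f)=C$ in a neighbourhood of $p$. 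It then suffices to show that under these hypotheses $f_{|C}$ is a local biholomorphism at $p$, contradicting the choice of $p$.

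For the local computation I would choose coordinates $(z_1,\dots,z_k)$ centred at $p$ and $(w_1,\dots,w_k)$ centred at $f(p)$ with $C=\{z_1=0\}$ in both systems, and write $f=(f_1,\dots,f_k)$. Since $f(C)\subseteq C$, the function $f_1$ vanishes on $\{z_1=0\}$, so in $\mathcal{O}_{X,p}$ (a unique factorization domain in which $z_1$ is prime) we may write $f_1=z_1^a g$ with $a\geq 1$ and $z_1\nmid g$, i.e.\ $g_{|C}\not\equiv 0$. Expanding $\jac f=\det Df$ and factoring $z_1^{a-1}$ out of the first row gives $\jac f=z_1^{a-1}\det M$, where $M$ agrees with $Df$ except that its first row is $\bigl(ag+z_1\partial_{z_1}g,\;z_1\partial_{z_2}g,\;\dots,\;z_1\partial_{z_k}g\bigr)$. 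On the other hand $\crit(f)=\{z_1=0\}$ near $p$ is a smooth, hence reduced, hypersurface, so by the Nullstellensatz $\jac f=z_1^{\,r}\cdot(\text{unit})$ for some $r\geq 1$; comparing the two expressions forces $r\geq a-1$ and $\det M=z_1^{\,r-a+1}\cdot(\text{unit})$.

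Finally, restricting to $C=\{z_1=0\}$: setting $z_1=0$ in $M$ annihilates every entry of the first row except its $(1,1)$ entry $a\,g_{|C}$, while the complementary $(k-1)\times(k-1)$ block becomes precisely the Jacobian matrix of $f_{|C}$ in the coordinates $(z_2,\dots,z_k)$ and $(w_2,\dots,w_k)$. Expanding along the first row then gives, as germs of holomorphic functions on $C$ at $p$, the identity $\det(M_{|C})=a\,g_{|C}\cdot\jac(f_{|C})$. If $r\geq a$, then $\det M$ vanishes on $\{z_1=0\}$, so $\det(M_{|C})\equiv 0$ and hence $g_{|C}\cdot\jac(f_{|C})\equiv 0$; since $C$ is locally irreducible and $g_{|C}\not\equiv 0$, this would force $\jac(f_{|C})\equiv 0$, impossible because $f(C)=C$, so $f_{|C}$ has generic rank $\dim C$. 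Therefore $r=a-1$, $\det(M_{|C})$ is a unit near $p$, and consequently $\jac(f_{|C})(p)\neq 0$: the desired contradiction.

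I expect the real obstacle to be the vanishing-order bookkeeping of the last two paragraphs, and in particular the exclusion of the case $r\geq a$: that is where one genuinely uses both that $C$ is smooth (so that $\mathcal{O}_{C,p}$ is an integral domain) and that $f$ maps $C$ \emph{onto} itself rather than merely into it. The first-paragraph reduction is essentially formal, the only subtlety being the appeal to smoothness of $C$ to absorb $C\cap\sing\crit(f)$ into $\bigcup_i C\cap C_i$.
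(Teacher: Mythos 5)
Your proof is correct and follows essentially the same route as the paper: local coordinates adapted to $C=\{z_1=0\}$, the factorization $f_1=z_1^{a}g$ with $g_{|C}\not\equiv 0$, and the identity expressing the relevant cofactor along $C$ as $a\,g_{|C}\cdot\jac(f_{|C})$, the non-vanishing of $\jac(f_{|C})$ on $C$ being supplied by surjectivity of $f_{|C}$ (the paper invokes its finiteness instead). The only difference is the final packaging: the paper argues directly that the critical point of $f_{|C}$ lies in the zero set of the cofactor factor of $\jac f$, hence on a critical component other than $C$, whereas you argue by contradiction, noting that near such a point $\crit(f)$ would be the reduced smooth hypersurface $C$, so $\jac f=z_1^{r}\cdot(\text{unit})$, and the vanishing-order bookkeeping then forces $\jac(f_{|C})(p)\neq 0$.
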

	
	\begin{proof}
		This is clear if $C$ is not a component of the critical set of $f$, since 
		any critical point for the restriction of $f$ to $C$ is also a critical 
		point for the ambient map $f: X \to X$. Therefore, let us assume that $C \subset \crit(f)$.
		Let $x$ be a critical point of the restriction of $f$ to $C$.

		All our arguments in this lemma are local.
		Therefore if $\dim X=k$, we may assume without loss of generality that 
		$C$ is locally written $\{ z_k=0\}$ in local holomorphic coordinates 
		$z=(z_1, \ldots , z_k)$ near $x$ and $f(x)$, and that $x=(0,\ldots, 0) \in \C^k$.
		In those coordinates, we may write
		$f(z_1, \ldots , z_k)=(f_1(z), \ldots, f_k(z))$: then $f_k(z_1, \ldots ,z_{k-1},0)=0$
		for every $(z_1, \ldots ,z_{k-1}) \in \Delta^k$. Therefore, for every $i \leq k-1$, 
		$\partial_i f_k =0$ on $C$. So locally near $x$, for every $z \in C$, $Df(z)$ can be represented
		 by the matrix:
		\[ Df(z)=
		\left(
		\begin{array}{c|c}
		A_k & * \\ \hline
		0 & \partial_k f_k 
		\end{array}
		\right)
		\]
		and $A_k$ is $Df_{|C}(z)$. Moreover, since $C$ is in the critical set of $f$, we have
		$$\jac f = \det A_k \times \partial_k f_k=0$$ on $C$. Also, $\det A_k$ cannot vanish 
		identically on $C$ because $f_{|C}: C \to C$ is finite to one
		(since the ambient map $f: X \to X$ a branched cover). So $\partial_k f_k$ vanishes 
		identically on $C$.
		Therefore, there is some maximal $m \in \N^*$ and some germ of holomorphic function
		$g$ such that
		$f_k(z_1, \ldots, z_k)=z_k^m g(z_1, \ldots, z_k)$, and $z_k$ does not divide $g$. Then note that 
		$z_k^m$ divides $\partial_i f_k$ for all $i<k$, but does not divide $\partial_k f_k$, while
		$z_k^{m-1}$ does divide $\partial_k f_k$
		(indeed, for all $i<k$, $\partial_i f_k=z_k^m \partial_i g$ and 
		$\partial_k f_k=m z_k^{m-1} g + z_k^{m} \partial_k g$).
		
		For every $i \leq k$, let $A_i$ be the minor of size $k-1$ corresponding to
		the $(k,i)$ entry of the matrix representation of $Df$ in the local coordinates 
		$(z_1, \ldots, z_k)$. 
		Then we have, by expanding the last row:
		$$\jac f = \sum_{i \leq k} \partial_i f_k \det A_i.$$
		
		Now if we let
		$$r= mg \det A_k + z_k\sum_{i \leq k} \partial_i g \det A_i,$$
		notice that $\jac f(z)=z_k^{m-1} r(z)$. Moreover, since $\det A_k$ is not identically zero on $C$
		and neither is $g$, the function $r$ is not identically zero on $C$.
		Now remember that by assumption, $x=0$ is in the critical locus of $f_{|C}$: therefore
		$\det A_k(0, \ldots, 0)=0$. This means that $r(0,\ldots,0)=0$, that is to say,
		$x$ belong to the analytic set $r^{-1}(0)$. Since $r$ is not identically zero on 
		$C$, this means that $x$ belong to some irreducible component of the zero locus of 
		$\jac f$ that is not $C$, which is the desired result.
	\end{proof}
	
	We will also need the following lemma due to Ueda:
	
	\begin{lem}\label{lem:ueda}(\cite{ueda1998critical}, Lemma 3.5)
		Let $X, Y$ be complex manifolds, and $f: X \to Y$ a holomorphic branched cover
		with branch locus $D \subset Y$. If $x \in \sing(f^{-1}(D))$, then $f(x) \in \sing(D)$. 
	\end{lem}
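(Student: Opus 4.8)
\textbf{Proof proposal for Lemma \ref{lem:ueda}.}

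The plan is to argue by contradiction and exploit the local structure of a branched cover near a point of the branch locus. Suppose $x \in \sing(f^{-1}(D))$ but $f(x) \in \reg(D)$. The statement is local, so I work in coordinates: since $f(x)$ is a smooth point of $D$, choose local holomorphic coordinates $(w_1, \ldots, w_k)$ on $Y$ centered at $f(x)$ in which $D = \{w_k = 0\}$. Similarly choose local coordinates $(z_1, \ldots, z_k)$ on $X$ centered at $x$. Because $f: X \to Y$ is a branched cover with branch locus $D$, the restriction $f: X \setminus f^{-1}(D) \to Y \setminus D$ is an unramified covering, so $f^{-1}(D) = \{w_k \circ f = 0\}$ as sets; write $h := w_k \circ f$, a germ of holomorphic function at $x$ vanishing exactly on $f^{-1}(D)$.

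The key step is to show that $f^{-1}(D)$ is in fact \emph{smooth} at $x$, contradicting $x \in \sing(f^{-1}(D))$. The idea is that over the punctured disk in the last $Y$-coordinate, $f$ is a covering, and one can use this to understand the ramification of $h = w_k \circ f$ transverse to $f^{-1}(D)$. Concretely: pick a smooth point $y$ of $f^{-1}(D)$ arbitrarily close to $x$ (such points exist since $f^{-1}(D)$ is a hypersurface), and a local arc or disk through $x$ transverse to $f^{-1}(D)$; its image under $f$ is a disk meeting $D$ transversally in $Y \setminus D$ coordinates, and the covering property on the complement of $D$ forces $h$ to have the form (in suitable coordinates adapted to $f^{-1}(D)$ near its smooth locus) $z_1 \mapsto z_1^e$ for a single well-defined ramification index $e$ — the local degree of $f$ along that sheet. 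Since $f^{-1}(D)$ is irreducible or we argue component by component, this index $e$ is constant along the smooth locus of each component, and then by continuity (or by the fact that a reduced defining equation of $f^{-1}(D)$ raised to a constant power must differ from $h$ by a unit) one deduces that $f^{-1}(D) = \{g = 0\}$ with $h = u \cdot g^e$, $u$ a unit. This exhibits $f^{-1}(D)$ as the zero set of $g$, and the ambient space being smooth together with $g$ reduced does not yet give smoothness — so instead I would use Ueda's actual mechanism: blow up or normalize and observe that $h = w_k \circ f$ composed with the normalization of $X$ (which is $X$ itself, smooth) shows the pullback of the smooth divisor $D$ has normal-crossing type multiplicities, hence $f^{-1}(D)$ is smooth wherever $D$ is smooth. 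The cleanest route: since $f$ is a branched cover between \emph{smooth} manifolds and $D$ is a smooth hypersurface near $f(x)$, purity of branch locus (Zariski–Nagata) and the structure theorem for such covers give that, locally at $x$, $f$ is analytically equivalent to $(z_1,\ldots,z_k) \mapsto (z_1^e, z_2, \ldots, z_k)$ for some $e \geq 1$; then $f^{-1}(D) = \{z_1 = 0\}$ is smooth, a contradiction.

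I expect the main obstacle to be justifying that local normal form $(z_1,\ldots,z_k)\mapsto(z_1^e,z_2,\ldots,z_k)$ — i.e. that a branched cover of smooth manifolds, branched over a \emph{smooth} hypersurface, is locally cyclic of this simple type. This is where one genuinely uses smoothness of both $X$ and $D$: away from $f^{-1}(D)$ the map is a covering, so the local fundamental group of $(Y\setminus D)$ near $f(x)$ is $\Z$ (generated by a small loop around $D$), every connected covering is $z_1\mapsto z_1^e$ after trivializing the transverse directions, and one must check this extends holomorphically across $f^{-1}(D)$ and that the resulting map has smooth fibers over $D$ — which follows from Riemann extension and the explicit form. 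Once the normal form is in hand, the conclusion $f^{-1}(D)=\{z_1=0\}$ smooth, contradicting $x\in\sing(f^{-1}(D))$, is immediate. As a fallback, if one prefers not to invoke purity of branch locus, one can instead run the Jacobian/divisibility computation as in Lemma \ref{lem:critintersect}: write everything out, show $\crit(f)$ near $x$ equals $\{z_1 = 0\}$ with the right multiplicity, and deduce that $f$ maps $\{z_1=0\}$ submersively onto $D$ except along a lower-dimensional set, forcing $f(x) \in \sing(D)$ whenever $x \in \sing(f^{-1}(D))$.
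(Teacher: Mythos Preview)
The paper does not give its own proof of this lemma: it is simply quoted from Ueda (\cite{ueda1998critical}, Lemma~3.5), so there is nothing to compare your argument against in the present text. That said, your main line of attack---assume $f(x)\in\reg(D)$, straighten $D$ locally to $\{w_k=0\}$, and invoke the local normal form $(z_1,\ldots,z_k)\mapsto(z_1^e,z_2,\ldots,z_k)$ for a holomorphic branched cover between smooth manifolds over a smooth branch divisor---is exactly the standard proof, and it is correct. Note that the paper itself uses precisely this normal form elsewhere, in the proof of Lemma~\ref{lem:regandcrit}, attributing it to Grauert--Remmert (\cite{grauert1958komplexe}, Satz~10); so you may simply cite that result rather than rederive it from the covering-space description of $\pi_1(\D^*\times\D^{k-1})\cong\Z$ plus Riemann extension, although your sketch of that derivation is also fine.

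Two minor comments on presentation. First, your write-up hedges between several routes (divisibility of $h=w_k\circ f$, normalization, purity of branch locus, the Jacobian computation from Lemma~\ref{lem:critintersect}); for a clean proof you only need one, and the Grauert--Remmert normal form is by far the shortest. Second, your ``fallback'' via the Jacobian computation is not quite parallel to Lemma~\ref{lem:critintersect} and would need more work to make rigorous, so I would drop it. Once the normal form is in hand, $f^{-1}(D)=\{z_1=0\}$ is smooth near $x$, contradicting $x\in\sing(f^{-1}(D))$, and you are done.
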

	
	Now we can prove Theorem \ref{th:pcfatwdcover}:
	
	\begin{proof}[Proof of Theorem \ref{th:pcfatwdcover}]
		Let us prove by induction on $0 \leq m \leq \dim X-1$ that 
		for any $(i_1, \ldots, i_m) \in I^m$,
		the restriction of $f_m$ to $L:=L_{i_1} \cap \ldots \cap L_{i_m}$ (with the convention
		that $L=X$ if $m=0$) is either unbranched 
		or PCF and that the irreducible components of $\pf(f_m,L)$ are of the form 
		$L_{i_1} \cap \ldots \cap L_{i_m} \cap L_{i_{m+1}}$.

		If $m=0$, then by definition the irreducible components of $\Omega_1$ are some of the $L_i$, and 
		by assumption $f: X \to X$ is PCF, so there is nothing to prove.
		
		Now let $0 \leq m \leq \dim X-1$ be such that 
		for any $(i_1, \ldots, i_m) \in I^m$,
		the restriction of $f_m$ to $L:=L_{i_1} \cap \ldots \cap L_{i_m}$ (with the convention
		that $L=X$ if $m=0$) is either unbranched 
		or PCF and that the irreducible components of $\Omega_{m+1}$ are of the form 
		$L_{i_1} \cap \ldots \cap L_{i_{m+1}}$.

		Let $N:=L_{i_1} \cap \ldots \cap L_{i_m}$ of codimension $m$ (if it has a codimension $n<m$, then it can be written
		as $L_{j_1} \cap \ldots L_{j_n}$ and the desired properties follow from the induction hypothesis).
		By definition of $f_{m+1}=f_m^{\circ k_{m+1}}$, the map $f_{m+1}$ fixes all of its post-critical components, so that 
		$\pf(f_{m+1},N) = \cv(f_{m+1},N)$ and $f_{m+1}$ restricts to an endomorphism of 
		any periodic component of $\pf(f_{m}, N)$.
		Let $L=L_{i_1} \cap \ldots L_{i_{m+1}}$ be a periodic component of $\pf(f_m, N)$, and let
		$C$ be an irreducible component of $\crit(f_{m+1},L)$ (if $\crit(f_{m+1},L)=\emptyset$, then $f_{m+1}: L \to L$ is unbranched and there is nothing to prove).
		
 		By Lemma \ref{lem:critintersect}, there is a component $C'$ of $\crit(f_{m+1},N)$ with $C' \neq L$ such that 
 		$C \subset C' \cap L$. Therefore, $C \subset \sing(f_{m+1}^{-1}(\cv(f_{m+1},N)))$. 
 		In fact, for any $n \in \N$, $C \subset \sing(f_{m+1}^{-n}(\cv(f_{m+1},N)))$ . By Lemma \ref{lem:ueda} 
 		applied to $f_{m+1}^n$ for any arbitrary
 		$n \in \N$,
 		we have $$f_{m+1}^n(C) \subset \sing(\cv(f_{m+1}^n,N)) \subset \sing(\pf(f_m,N)).$$
 		Hence the increasing sequence of analytic sets of pure codimension $m+2$ given by 
 		$$C_N:=\bigcup_{n \leq N} f_1^n(C)$$
 		is contained in the analytic set of pure codimension $m+2$ given by $\sing(\pf(f_m,N))$. Moreover, 
 		by assumption, each intersection of components of $\pf(f,X)$ is smooth
 		so the induction hypothesis implies that the singular part of $\pf(f_m,N)$ consists 
 		in points belonging to at least two different irreducible components of $\pf(f_m, N)$. Those are 
 		of the form $L_{i_1} \cap L_{i_{m+2}}$, $(i_1,i_{m+2}) \in I$.
 		In particular, $\sing(\pf(f_m,N))$  has only finitely many irreducible components, so the sequence $C_N$ is eventually
 		stationnary. Since this holds for any irreducible component $C$ of $\crit(f_{m+1},L)$, we have proved that 
 		$f_{m+1}: L \to L$ is PCF and that the irreducible components of $\pf(f_{m+1}, L)$ are of the form 
 		$L_{i_1} \cap L_{i_{m+2}}$.
		This finishes the proof.

	\end{proof}

	An endomorphism $f: \pk \to \pk$ may be represented in homogeneous coordinates by $k+1$ homogeneous polynomials
	of a same degree $d \geq 1$. The integer $d$ is called the algebraic degree of $f$ (not to be confused with the
	topological degree, which is $d^k$).
	The following classical lemma is due to Dinh and Sibony:
	
	\begin{lem}[\cite{dinh2010dynamics}, Lemma 1.48]\label{lem:topdegr}
		Let $f: \pk \to \pk$ be an endomorphism of algebraic degree $d$, and let $L \subset \pk$ be an algebraic set 
		of pure dimension $m$, such that $L=f(L)$. Then for $n$ large enough, $f^n$ fixes all irreducible components of $L$, and 
		the restriction of $f^n$ to $L$
		has topological degree $d^{nm}$.
	\end{lem}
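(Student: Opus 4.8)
The plan is to establish the two assertions separately. First I would show that $f$ permutes the finitely many irreducible components of $L$: since $f$ is an endomorphism of $\pk$ it is a finite branched cover, so it has finite fibers and in particular does not lower dimensions, and therefore for each irreducible component $L_j$ of $L$ the image $f(L_j)$ is an irreducible analytic set of dimension $m$ contained in $f(L)=L$; as $L$ has pure dimension $m$, this forces $f(L_j)$ to be one of the components $L_i$. Thus $f$ induces a self-map $\sigma$ of the finite set of irreducible components of $L$, which is onto because $f(L)=L$ and hence a permutation. If $\ell$ denotes its order, then for every $n$ divisible by $\ell$ --- in particular for arbitrarily large $n$ --- the iterate $f^n$ fixes each irreducible component of $L$. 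Fix such an $n$ and set $g:=f^n$; this is an endomorphism of $\pk$ of algebraic degree $\delta:=d^n$ with $g(L_j)=L_j$ for every $j$.

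Next I would reduce the degree computation to the case of an irreducible $L$. The topological degree of $g|_L$ is the number of points in $(g|_L)^{-1}(p)$ for $p$ generic in $L$. Choosing $p$ in a prescribed component $L_j$, away from the critical values of $g|_L$ and from the pairwise intersections of the components, every preimage of $p$ that lies in $L$ must in fact lie in $L_j$, because $g$ maps each component into itself. Hence the topological degree of $g|_L$ equals that of $g|_{L_j}$, for every $j$ (so in particular all these numbers coincide), and it suffices to prove: if $L\subset\pk$ is irreducible of dimension $m$ with $g(L)=L$, then $\deg(g|_L)=\delta^m$, which then gives $d^{nm}$.

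For this last step I would use the relation $g^*\omega=\delta\,\omega+dd^c\psi$, where $\omega$ is the Fubini--Study form and $\psi$ is a smooth function on $\pk$ (it exists because $g^*\omega$ and $\delta\,\omega$ are cohomologous, $g$ having algebraic degree $\delta$, together with the global $\partial\bar\partial$-lemma). Expanding, $(g^*\omega)^m=\delta^m\omega^m+dd^c\beta$ for a smooth form $\beta$. Pairing against the current of integration $[L]$, which is closed and positive of bidimension $(m,m)$, the term $dd^c\beta$ contributes $0$, so $\langle[L],(g^*\omega)^m\rangle=\delta^m\langle[L],\omega^m\rangle$. On the other hand, $g\colon L\to L$ is a branched cover of degree $\deg(g|_L)$, so $g_*[L]=\deg(g|_L)\,[L]$, and the projection formula for the finite holomorphic map $g$ gives $\langle[L],(g^*\omega)^m\rangle=\langle g_*[L],\omega^m\rangle=\deg(g|_L)\,\langle[L],\omega^m\rangle$. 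Since $\langle[L],\omega^m\rangle=\int_L\omega^m>0$ (it is, up to the normalization of $\omega$, the degree of the variety $L$), comparing the two expressions yields $\deg(g|_L)=\delta^m=d^{nm}$, as wanted.

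The step that deserves care, and the one I expect to be the main technical point, is that $L$ may be singular, so that one must make sense of $(g|_L)^*(\omega^m)$ and justify the identities $g_*[L]=\deg(g|_L)\,[L]$ and $\langle[L],g^*\alpha\rangle=\langle g_*[L],\alpha\rangle$. I would handle this by restricting everything to $\reg(L)$, a dense complex submanifold of $L$ whose complement --- together with the critical values of $g|_L$ and the set $g^{-1}(\sing L)\cap L$ --- has zero $\omega^m$-measure and so does not affect the (finite) integrals involved, so that the usual change-of-variables formula and Stokes' theorem apply there; alternatively one can invoke directly the calculus of closed positive currents on $\pk$ intersected with $[L]$ (for which $\omega$ has smooth local potentials), which is precisely the framework of \cite{dinh2010dynamics} and in which all of this functoriality is standard. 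Everything else in the argument is formal.
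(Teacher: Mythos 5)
Your proof is correct. Note that the paper itself gives no proof of this lemma: it is quoted as a known result from \cite{dinh2010dynamics} (Lemma 1.48), so the only meaningful comparison is with the cited source, and your argument is essentially the standard one used there --- the algebraic degree determines the action of $g^*$ on $H^{1,1}(\pk)$, hence $g^*(\omega^m)=\delta^m\omega^m+dd^c\beta$ with $\beta$ smooth, and pairing against the closed positive current $[L]$ together with $g_*[L]=\deg(g|_L)\,[L]$, the projection formula and $\int_L\omega^m>0$ forces $\deg(g|_L)=\delta^m=d^{nm}$; the equivalent formulation via the action of $g_*$ on $H^{k-m,k-m}(\pk)$ is the same computation. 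Your preliminary reductions are also fine: the components are permuted since $f$ is finite and proper, and your reading of ``for $n$ large enough'' as ``for suitable arbitrarily large $n$'' (multiples of the order of the permutation) is the correct one, since for a map cyclically permuting several components the literal statement fails for non-multiples. The one technical point --- justifying $g_*[L]=\deg(g|_L)[L]$ and the pairings when $L$ is singular --- is correctly identified and handled in your last paragraph ($[L]$ is closed by Lelong's theorem and integration currents put no mass on proper analytic subsets), so there is no gap.
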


	\begin{coro}
		Let $f: \pk \to \pk$ be a PCF endomorphism such that $\pf(f,\pk)$  is a union
		of hyperplanes. Then $f$ is PCF all the way down.
	\end{coro}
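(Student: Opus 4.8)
The plan is to invoke Theorem \ref{th:pcfatwdcover} and then to rule out the ``unbranched'' alternative it allows, by a topological–degree argument. Since $\pf(f,\pk)$ is a union of hyperplanes, each periodic irreducible component $L_i$ of $\pf(f,\pk)$ is a hyperplane, and for every $J\subseteq I$ the intersection $\bigcap_{j\in J}L_j$ is a projective linear subspace of $\pk$, hence smooth. So the hypothesis of Theorem \ref{th:pcfatwdcover} is met, and it yields: $f$ is weakly PCF all the way down; each irreducible component of $\Omega_m$ has the form $L=L_{i_1}\cap\cdots\cap L_{i_m}$, hence is a projective linear subspace of dimension $k-m$; and $f_m:L\to L$ is either unbranched or PCF. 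Recalling that $f$ is PCF all the way down exactly when, for every $m\le k-1$, the restriction of $f_m$ to each component of $\Omega_m$ is PCF, I am left to show that $f_m:L\to L$ is never unbranched when $k-m\ge 1$.

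For this I would compute the topological degree of $f_m|_L$. Unwinding Definition \ref{def:pcfatwd}, $f_m$ is the restriction to $\Omega_m$ of the iterate $g:=f^{N}$ with $N=k_0k_1\cdots k_{m-1}\ge 1$, and $f_m$ fixes each irreducible component $L$ of $\Omega_m$, so $g(L)=L$. Now $g$ is an endomorphism of $\pk$ of algebraic degree $d^{N}$, where $d$ is the algebraic degree of $f$, and $L$ is an invariant algebraic subvariety of pure dimension $k-m$; applying Lemma \ref{lem:topdegr} to $g$ (i.e.\ to $g^n$ for $n$ large, and using multiplicativity of the topological degree under iteration) gives that $f_m|_L=g|_L$ has topological degree $d^{N(k-m)}$. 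Since $f$ is PCF, $\crit(f)\neq\emptyset$, so $f$ is not an automorphism and $d\ge 2$; hence $d^{N(k-m)}\ge 2$ whenever $m\le k-1$.

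Finally, $f_m:L\to L$ is a finite branched cover, so if it were unbranched it would be a finite covering map of $L$; but $L\cong\mathbb{P}^{k-m}$ is simply connected when $k-m\ge 1$, which would force its topological degree to equal $1$, contradicting the previous paragraph. Hence $f_m:L\to L$ is PCF for every $m\le k-1$ and every component $L$ of $\Omega_m$ (the claim being vacuous if $\Omega_m=\emptyset$), i.e.\ $f$ is PCF all the way down. The only step that needs genuine care is the degree bookkeeping of the second paragraph — correctly identifying $f_m$ with a true iterate of $f$ restricted to $\Omega_m$ and deducing the value $d^{N(k-m)}$ from Lemma \ref{lem:topdegr}; once Theorem \ref{th:pcfatwdcover} is available, nothing else is an obstacle.
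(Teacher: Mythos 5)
Your proof is correct, but it rules out the unbranched alternative by a different mechanism than the paper. After invoking Theorem \ref{th:pcfatwdcover} (both arguments start this way, and your reduction to ``$f_m$ on an invariant linear subspace $L$ cannot be unbranched for $\dim L\ge 1$'' is exactly the right one), the paper deliberately avoids Lemma \ref{lem:topdegr}: it writes $f^p$ in homogeneous coordinates adapted to $L$, observes that the restriction to $L$ is given by the restricted coordinate polynomials and, having no indeterminacy, still has algebraic degree $d^p\ge 2$, so its Jacobian in homogeneous coordinates is a non-constant polynomial and the critical locus of $f^p\colon L\to L$ is non-empty. You instead identify $f_m$ with $f^{N}|_{\Omega_m}$ (correct bookkeeping from Definition \ref{def:pcfatwd}), use Lemma \ref{lem:topdegr} plus multiplicativity to get topological degree $d^{N(k-m)}\ge 2$ on $L$, and then note that an unbranched finite cover of the simply connected space $L\cong\mathbb{P}^{k-m}$ must have degree $1$. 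Both are valid; the paper's computation is more elementary and self-contained (it even yields the extra information that the restriction keeps algebraic degree $d^p$ and uses the linearity of $L$ in an essential way), while your argument is softer and more portable: it would apply verbatim to any smooth invariant subvariety that is simply connected, since it only needs ``degree $>1$'' (the exact value $d^{N(k-m)}$, and even the precise dimension $k-m$ of the components of $\Omega_m$, which strictly speaking comes from the pure-codimension remark after Definition \ref{def:pcfatwd} rather than from the intersection description alone, are not really needed). It is also consistent in spirit with how the paper itself excludes automorphisms in Corollary \ref{coro:pcfnorot}.
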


	\begin{proof}
		In this case we do not need to use Lemma \ref{lem:topdegr}.
		Any intersection of hyperplanes is a projective subspace, hence smooth. 
		In view of Theorem \ref{th:pcfatwdcover}, it is enough to prove that if $L \subset \pk$ 
		is a projective subspace with $f^p(L)=L$ for some $p \in \N^*$, then the restriction
		map $f^p: L \to L$ cannot be unbranched. 
		 Let $d$ be the algebraic degree of $f$ on $\pk$: $d \geq 2$
		since $f$ is PCF and therefore has non-empty critical set. After a change of coordinates,
		we may assume that $L$ can be written as 
		$\{z_k = \ldots = z_m=0\}$ in homogeneous coordinates.
		In those homogeneous coordinates, $f$ is of the form
		$$(z_0: \ldots :z_{k}) \mapsto (P_0(z_0, \ldots, z_{k}), \ldots, P_{k}(z_{0}, \ldots, z_{k})$$
		where the $P_i$ are homogeneous polynomials of degree $d^{p}$.
		 The restriction of $f^p$ 
		to $L$ is then of the form 
		$$(z_0: \ldots :z_{m-1}) \mapsto (P_0(z_0, \ldots, z_{m_1}, 0, \ldots,0), \ldots, P_{m-1}(z_{0}, \ldots, z_{m-1}, 0, \ldots,0))$$
		and since $f^p$ has no indeterminacy points, $f^p: L \to L$ has algebraic degree $d^p$ so
		its Jacobian (in homogeneous coordinates) is a non-constant polynomial, so the critical locus
		of $f^p: L \to L$ is not empty.
	\end{proof}

	\section{Absence of rotation domains}
	
	Let $Z$ be an analytic set, and $f: \pk \to \pk$ be a holomorphic endomorphism.
	Let $h: Z \to \pk$ be an analytic map.
	Following Ueda (\cite{ueda1998critical}), we say that a family of maps $g_n: Z \to \pk$ is a 
	family of lifts of $h$ by iterates of $f$ if for all $n \in \N$, there is $m_n \in \N$ such that 
	$f^{m_n} \circ g_n=h$.

	We will rely quite heavily on the following useful theorem, due to Ueda (\cite{ueda1998critical}, Th. 2.1):
	
	\begin{theo}\label{th:normalitylift}
		Let $Z$ be an analytic set, and $f: \pk \to \pk$ be a holomorphic endomorphism.
		Let $h: Z \to \pk$ be an analytic map. Let $(g_n)_{n \in \N}$ be a family of lifts of $h$ by iterates
		of $f$. Then $(g_n)_{n \in \N}$ is a normal family.
	\end{theo}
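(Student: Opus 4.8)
The plan is to reduce to the homogeneous polynomial lift $F$ of $f$ on $\C^{k+1}$ and to exploit the super-exponential growth of $\|F^n\|$: this confines all lifts of the $g_n$ to one fixed compact annulus of $\C^{k+1}\setminus\{0\}$, and normality then follows from Montel's theorem. Since normality is local on $Z$, it suffices to work near an arbitrary point $z_0\in Z$, and I may assume the algebraic degree $d$ of $f$ satisfies $d\geq 2$ (if $d=1$ then $f$ is an automorphism of $\pk$, handled by a similar but simpler renormalization; and any PCF $f$ has $d\geq 2$ anyway). Write $f$ in homogeneous coordinates as $[F]$ with $F:\C^{k+1}\to\C^{k+1}$ homogeneous of degree $d$; since $f$ is everywhere defined, $F^{-1}(0)=\{0\}$. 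Over a small contractible (hence simply connected) neighborhood $B$ of $z_0$ in $Z$, I would lift $h$ to $\tilde h:B\to\C^{k+1}\setminus\{0\}$ and each $g_n$ to $\hat g_n:B\to\C^{k+1}\setminus\{0\}$ along the projection $\pi:\C^{k+1}\setminus\{0\}\to\pk$ (which restricts to a holomorphically trivial $\C^*$-bundle over $B$). As $F^{m_n}\circ\hat g_n$ and $\tilde h$ are both lifts of $h$, we get $F^{m_n}\circ\hat g_n=\lambda_n\tilde h$ for a holomorphic $\lambda_n:B\to\C^*$; extracting a holomorphic $d^{m_n}$-th root $\mu_n$ of $\lambda_n$ (legitimate on the simply connected $B$) and putting $\tilde g_n:=\mu_n^{-1}\hat g_n$ (still a lift of $g_n$), homogeneity of $F$ gives $F^{m_n}\circ\tilde g_n=\tilde h$ exactly.

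The analytic core is a uniform norm estimate. With $m_F:=\min_{\|z\|=1}\|F(z)\|>0$ and $M_F:=\max_{\|z\|=1}\|F(z)\|<\infty$, homogeneity gives $m_F\|z\|^d\leq\|F(z)\|\leq M_F\|z\|^d$, and iterating,
\[
m_F^{(d^n-1)/(d-1)}\,\|z\|^{d^n}\ \leq\ \|F^n(z)\|\ \leq\ M_F^{(d^n-1)/(d-1)}\,\|z\|^{d^n}.
\]
Taking logarithms, dividing by $d^n$, and using $0\leq\tfrac{d^n-1}{(d-1)d^n}\leq\tfrac1{d-1}$ and $0<d^{-n}\leq 1$, one extracts constants $0<r'\leq R'<\infty$ depending on $F$ and on prescribed $0<r\leq R<\infty$ but \emph{not on $n$}, such that $r\leq\|F^n(z)\|\leq R$ forces $r'\leq\|z\|\leq R'$. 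This is exactly where $d\geq 2$ is used: the exponents $d^{-n}$ tend to $0$, so backward iterates of a fixed annulus can escape neither to $0$ nor to $\infty$. Applying this with $F^{m_n}\circ\tilde g_n=\tilde h$ on a relatively compact open $B'\subset B$, where $\tilde h(B')$ sits in some annulus $\{r\leq\|z\|\leq R\}$, I obtain $\tilde g_n(B')\subset\{r'\leq\|z\|\leq R'\}\subset\C^{k+1}\setminus\{0\}$ for every $n$.

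To conclude, $(\tilde g_n)$ is uniformly bounded on $B'$, hence normal by Montel's theorem (when $Z$ is singular, apply Montel on $\reg(B')$ and extend subsequential limits across $\sing(B')$ by Riemann's removable-singularity theorem, using the uniform bound); every subsequential limit has norm $\geq r'>0$, so it takes values in $\C^{k+1}\setminus\{0\}$, and composing with the continuous map $\pi$ yields a locally uniformly convergent subsequence of $(g_n)$ on $B'$. Covering $Z$ by such sets $B'$ gives normality of $(g_n)$ on $Z$. The step I expect to be most delicate is the clean bookkeeping of the lifts — the $d^{m_n}$-th root extraction, valid only on simply connected pieces, and the matching of local lifts of $h$ and of the $g_n$ — together with the minor technicality of a possibly singular $Z$; the quantitative heart, namely the $n$-uniform two-sided bound on preimages of an annulus, is soft and comes straight from the homogeneity of $F$.
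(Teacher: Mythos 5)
The paper does not actually prove Theorem \ref{th:normalitylift}: it quotes it from \cite{ueda1998critical} (Theorem 2.1), and your argument is essentially a reconstruction of Ueda's proof of that result --- lift $f$ to the homogeneous map $F$ on $\C^{k+1}$ with $F^{-1}(0)=\{0\}$, lift $h$ and the $g_n$ over a small contractible Stein neighborhood $B$, renormalize by a $d^{m_n}$-th root so that $F^{m_n}\circ\tilde g_n=\tilde h$ exactly, trap $\tilde g_n(B')$ in a fixed compact annulus of $\C^{k+1}\setminus\{0\}$ using $m_F\|z\|^d\le\|F(z)\|\le M_F\|z\|^d$, and conclude by Montel. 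The quantitative core (the $n$-uniform two-sided bound on preimages of an annulus, which is exactly where $d\ge 2$ enters) is correct, and the bookkeeping of the lifts (triviality of the pulled-back $\C^*$-bundles and the root extraction over a contractible Stein $B$) is sound.

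Two caveats. First, your parenthetical on $d=1$ is false, not just unproved: for $d=1$ the statement itself fails. Take $k=1$, $f([z:w])=[2z:w]$, $h=\id_{\rs}$ and $g_n=f^{-n}$; near the fixed point $[1:0]$ the pointwise limit of $g_n$ is discontinuous, so no subsequence converges locally uniformly there. The hypothesis $d\ge 2$, explicit in Ueda's statement and implicit in the paper (which only applies the theorem to maps of algebraic degree at least $2$), is genuinely needed, and your main argument does use it; simply delete the claim that $d=1$ can be handled similarly. Second, your treatment of singular $Z$ is not a proof: Riemann-type extension of bounded functions across $\sing(B')$ requires the space to be normal, and in any case extending the \emph{limit} does not address what normality demands, namely locally uniform convergence of a subsequence of $(\tilde g_n)$ at points of $\sing(B')$. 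The clean fix is to invoke the fact that Montel's theorem holds on reduced complex spaces (the space of holomorphic functions on $B'$ with the compact-open topology is Fr\'echet--Montel), applied to the uniformly bounded $\C^{k+1}$-valued maps $\tilde g_n$; alternatively, note that in every application in this paper $Z$ is an open subset of a smooth subvariety, where your argument as written already suffices.
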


	The following result is a straightforward adaptation of Theorem 4.15 of \cite{ueda1998critical}.
	We include a proof for the convenience of the reader.
	
	\begin{theo}\label{th:rotdombdry}
		Let $f: \pk \to \pk$ be an endomorphism of algebraic degree $d \geq 2$.
		Let $L$ be a closed regular subvariety of $\pk$ with $L=f(L)$, and assume that the restriction
		$f: L \to L$ has a rotation domain $V$. Then $\partial V$ (the boundary of $V$ in $L$) is included
		in the post-critical set of the restriction map $f: L \to L$.
	\end{theo}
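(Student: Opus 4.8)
The plan is to follow Ueda's strategy for Theorem 4.15 of \cite{ueda1998critical}, adapted to the invariant subvariety $L$. Suppose for contradiction that there is a point $x_0 \in \partial V$ (boundary taken inside $L$) which does not lie in $\pf(f,L)$. Since $f(L) = L$ and $f : L \to L$ has algebraic degree $\geq 2$ (this follows from Lemma \ref{lem:topdegr}, as $L$ is regular and $f$-invariant), the restriction $f : L \to L$ is a branched cover whose branch locus is contained in $\pf(f,L)$. The idea is to construct, on a neighborhood of $x_0$ in $L$, a family of inverse branches of the iterates $f^n$ that accounts for the rotation dynamics on $V$, then extract a normal limit and derive a contradiction with the fact that $x_0 \in \partial V$.

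First I would set up the rotation domain structure: on a rotation domain $V$, after passing to a subsequence the iterates $f^{n_j}|_V$ converge locally uniformly to the identity (or to an automorphism of $V$ close to the identity), so in particular for $j$ large, $f^{n_j}$ maps a fixed compact $K \subset V$ into a slightly larger compact $K' \subset V$ and is injective there. Next, pick a small ball $B \subset L$ around $x_0$ disjoint from $\pf(f,L)$; since $\pf(f,L) \supset \cv(f,L)$ contains the branch locus, all branches of $f^{-n}$ are well-defined and holomorphic on $B$, and by Theorem \ref{th:normalitylift} (applied with $Z = B$, $h = \mathrm{incl} : B \hookrightarrow L \subset \pk$, and $g_n$ the appropriate inverse branches) these branches form a normal family. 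The key construction is: choosing a point $y_0 \in V$ near $\partial V$ and connecting it to (a point near) $x_0$ while tracking how $f^{n_j}$ acts, one selects the branch $h_j$ of $f^{-n_j}$ on $B$ that sends a point of $f^{n_j}(V)$-side back towards $V$; because $f^{n_j}|_V \to \mathrm{id}$, these branches $h_j$ restricted to $B \cap V$ converge to the identity, hence by normality $h_j \to \mathrm{id}$ on all of $B$. But then for $x \in B \setminus \overline{V}$ close to $x_0$, $h_j(x) \to x \notin \overline V$, while the construction forces $h_j(x) \in V$ — more precisely one arranges that $f^{n_j}$ maps a neighborhood of $x_0$ into $V$, contradicting that $x_0 \notin \overline V$ would then be mapped into an open set whose closure does not contain $x_0$, yet normality of $f^{n_j}$ itself (again Theorem \ref{th:normalitylift}, or just that $f^{n_j}|_V \to \mathrm{id}$) forces the limit to fix $x_0$.

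The cleanest way to package this: let $\phi$ be a normal limit of $f^{n_j}|_V$ (so $\phi = \mathrm{id}$ up to subsequence on a rotation domain), let $\psi$ be a normal limit of a suitable sequence of inverse branches $h_j = (f^{n_j})^{-1}$ defined on $B$ (normal by Theorem \ref{th:normalitylift}), arranged so that $h_j \circ f^{n_j} = \mathrm{id}$ near $x_0$ for each $j$ — this is possible precisely because $B$ avoids the branch locus. On $B \cap V$ the composition identity passes to the limit and gives $\psi \circ \phi = \mathrm{id}$ there, so $\psi = \mathrm{id}$ on $B \cap V$, hence $\psi = \mathrm{id}$ on $B$ by the identity principle. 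But also $f^{n_j} \to \mathrm{id}$ would have to hold on $B$ (again by normality: the limit agrees with the identity on the open set $B\cap V$), so $f^{n_j}(x_0) \to x_0$; since for $x \in B\cap V$ we have $f^{n_j}(x) \in V$ and $x_0 \in \partial V$, passing to a limit along points of $V$ converging to $x_0$ shows $x_0 \in \overline{f^{n_j}(V)}$ in a controlled way — one then extracts the contradiction that $x_0$ is simultaneously an interior and a boundary point of the Fatou set, since $f^{n_j} \to \mathrm{id}$ on a full neighborhood $B$ of $x_0$ would make $x_0$ an interior point of a rotation-like Fatou component, yet $x_0 \in \partial V \subset \julia(f|_L)$ contradicts normality of $\{f^n\}$ at $x_0$.

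The main obstacle is the bookkeeping of which inverse branch to choose at each step so that the composition $h_j \circ f^{n_j}$ really is the identity on a fixed neighborhood of $x_0$ independent of $j$: one must ensure the relevant branch of $f^{-n_j}$ extends holomorphically across all of $B$ (using that $B \cap \pf(f,L) = \emptyset$ and $\pf(f,L)$ contains the critical values of all iterates $f^n|_L$, since $f(\pf) \subset \pf$), and that its value at a base point in $B \cap V$ lands in $V$ so that the limit relation $\psi\circ\phi=\mathrm{id}$ is available. This is exactly the technical content of Ueda's original argument; the adaptation to $L$ is routine once one knows $f|_L : L \to L$ is a branched cover of degree $\geq 2$ with branch locus inside $\pf(f,L)$, which is guaranteed by $L$ being regular, $f$-invariant, and by Lemma \ref{lem:topdegr}.
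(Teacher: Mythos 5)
Your overall strategy is the paper's: take a simply connected neighborhood $D$ of $x_0$ in $L$ disjoint from $\pf(f,L)$, extend the inverses $h_j=(f^{n_j}|_V)^{-1}$ to branches of $f^{-n_j}$ on $D$ (possible because $D$ avoids $\pf(f,L)$, which contains the critical values of every iterate of $f|_L$), invoke Theorem \ref{th:normalitylift} to get normality of $(h_j)$, and use $h_j\to\id$ on $D\cap V$ together with the identity principle to get $h_j\to\id$ on all of $D$. Up to that point your argument coincides with the paper's proof.

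The weak point is the concluding step. You deduce that ``$f^{n_j}\to\id$ on $B$'' by ``normality'', but normality of the forward iterates $\{f^{n_j}\}$ on a neighborhood of $x_0$ is exactly what is in question: a priori $x_0\in\partial V$ does not lie in the Fatou set of $f|_L$, so you cannot extract a limit of $f^{n_j}$ near $x_0$ and compare it with the identity on $B\cap V$ --- that is circular, and your closing sentence (``$x_0\in\partial V\subset\julia(f|_L)$ contradicts normality of $\{f^n\}$ at $x_0$'') has the same circularity. Likewise the relation $h_j\circ f^{n_j}=\id$ near $x_0$, which you single out as the main technical obstacle, is not automatic: it presupposes that $f^{n_j}$ maps a fixed neighborhood of $x_0$ into $B$ and that $h_j$ is the branch through $x_0$, and neither is established. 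The fix requires no such bookkeeping and is how the paper concludes: since $h_j\to\id$ uniformly on compacts of $D$, any connected relatively compact open set $W\subset D$ containing $x_0$ satisfies $W\subset h_j(D)$ for all large $j$; on $h_j(D)$ the map $f^{n_j}$ is the inverse of the injective map $h_j$, hence $f^{n_j}\to\id$ uniformly on $W$. Thus $W$ lies in the Fatou set of $f|_L$, and being connected and meeting $V$ it lies in the rotation domain $V$, so $x_0$ is an interior point of $V$, contradicting $x_0\in\partial V$. With this replacement your proof is the paper's. (Incidentally, the appeal to Lemma \ref{lem:topdegr} and to the degree of $f|_L$ is not needed for this theorem; what matters is only that branches of $f^{-n}|_L$ exist over the simply connected set avoiding $\pf(f,L)$, plus Theorem \ref{th:normalitylift}.)
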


	\begin{rem}
		The assumption that $f: L \to L$ extends to an endomorphism of $\pk$ is necessary:
		indeed, there are automorphisms of projective varieties (which 
		therefore have empty post-critical set) with rotation domains that are not 
		all of $L$. The crucial point here is the normality of the inverse branches
		afforded by Theorem \ref{th:normalitylift}.
	\end{rem}
	
	\begin{proof}[Proof of Theorem \ref{th:rotdombdry}]
		Let $x \in \partial V$. Assume that $x$ does not belong to the post-critical set $\pf(f,L)$
		of $f: L \to L$. Let $D$ be a simply connected neighborhood of $x$ in $L$ that does not 
		meet the post-critical set of $f: L \to L$.
		Let $(n_j)_{j \in \N}$ be a subsequence such that $f^{n_j}$ converges locally uniformly 
		to the identity on $S$. For all $j \in \N$, $f^{n_j}: V \to V$ is an automorphism of $S$, 
		so we can consider $h_j:=(f^{n_j})^{-1}: V \to V$. Moreover, since $D$ is simply connected 
		and disjoint from the post-critical set of $f: L \to L$, for all $j \in \N$ 
		branches of $f^{-n_j}$ are well-defined, so we can extend $h_j$ to $D$.
		
		By Theorem \ref{th:normalitylift}, the family $(h_j)_{j \in \N}$
		is normal on $D$, and from the fact that $V$ is a rotation domain,
		 it is easy to see that $(h_j)_{j \in \N}$ converges to the identity on $V$, 
		 and therefore also on $D$ by the identity principle. 
		Let $W$ be a connected, relatively compact open subset of $D$ containing $x$ such that
		for all $j \in \N$ large enough, $W \subset h_j(D)$. Then  $f^{n_j}_{|W}$ converges
		to $\id_{|W}$, hence $W$ is included in the rotation domain $V$. This contradicts 
		the fact that $x \in \partial V$.
	
	\end{proof}

	\begin{coro}\label{coro:pcfnorot}
		Assume that $f: \pk \to \pk$ is PCF, and call $L_i, i \in I$ the irreducible components of 
		$\Omega_1$. Let $J \subset I$, $L:=\bigcap_{j \in J} L_j$ and 
		$m$ the least common multiple of the periods of the $L_j, j \in J$. Then $f^{m}:L \to L$ 
		has no rotation domain.
	\end{coro}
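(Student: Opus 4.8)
The strategy is to run Ueda's rotation‑domain argument one more time, in the form already recorded as Theorem~\ref{th:rotdombdry}, and combine it with the post‑critical finiteness of the first‑return map $f^m$ on $L$ together with a connectedness argument. Throughout I will work with one connected component of $L$ at a time, so I may assume $L$ is connected; since $L=\bigcap_{j\in J}L_j$ is smooth (which is guaranteed, for instance, by the weak transversality hypothesis present in the applications of this corollary, the Constant Rank Theorem then making every such intersection a smooth submanifold) it is pure‑dimensional, and I may assume $\dim L\geq1$, the statement being vacuous otherwise. Because each $L_j$, $j\in J$, is an $f$‑periodic irreducible component of $\pf(f,\pk)$ and $m$ is the least common multiple of the periods, $f^m(L_j)=L_j$ for every $j\in J$, hence $f^m(L)\subseteq L$; since $f^m$ is finite‑to‑one it preserves dimension, and $L$, being connected and smooth, is irreducible, so the closed analytic subset $f^m(L)$ must equal $L$. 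Applying Lemma~\ref{lem:topdegr} to the endomorphism $f^m$ of $\pk$ (of algebraic degree $d^m$) and to $L$, the map $f^{mn}|_L$ has topological degree $d^{mn\dim L}$ for all large $n$, and by multiplicativity of the topological degree under composition $f^m|_L$ itself has topological degree $d^{m\dim L}$, which is $\geq2$ since $f$ is PCF (so $d\geq2$). Finally, by Theorem~\ref{th:pcfatwdcover}, $f^m:L\to L$ is either unbranched or PCF, so in either case $\pf(f^m,L)$ is a proper analytic subset of $L$ — empty, or an analytic hypersurface.

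Now suppose, for a contradiction, that $f^m:L\to L$ has a rotation domain $V$; in particular $f^m(V)=V$ and $f^m|_V$ is an automorphism of $V$. By Theorem~\ref{th:rotdombdry}, $\partial V\subseteq\pf(f^m,L)$. Since $\pf(f^m,L)$ is an analytic subset of complex codimension at least one in the connected manifold $L$, its complement $L\backslash\pf(f^m,L)$ is connected; and $\partial V\subseteq\pf(f^m,L)$ says exactly that $V$ is closed in $L\backslash\pf(f^m,L)$. Being also open and nonempty in this connected set, $V=L\backslash\pf(f^m,L)$, so in particular $V$ is dense in $L$. The post‑critical set $\pf(f^m,L)$ is forward invariant, hence $\pf(f^m,L)\subseteq(f^m)^{-1}(\pf(f^m,L))$ and therefore $(f^m)^{-1}(V)\subseteq V$; together with $V=f^m(V)\subseteq(f^m)^{-1}(V)$ this gives $(f^m)^{-1}(V)=V$. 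Moreover $V=(f^m)^{-1}(V)$ contains no critical point of $f^m:L\to L$, since $\crit(f^m,L)\subseteq(f^m)^{-1}(\cv(f^m,L))\subseteq(f^m)^{-1}(\pf(f^m,L))$, which is disjoint from $(f^m)^{-1}(V)$. Consequently $f^m:V\to V$ is proper, surjective and a local biholomorphism, i.e.\ an unbranched covering, and its number of sheets is the cardinality of a generic fiber of $f^m|_L$ — namely $d^{m\dim L}\geq2$, because a generic point of $L$ lies in the dense open set $V$, its $f^m$‑preimages all lie in $V$, and none of them is a critical point. This contradicts the fact that $f^m|_V$ is an automorphism of $V$. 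Hence $f^m:L\to L$ has no rotation domain.

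The one point requiring care is that both Theorems~\ref{th:rotdombdry} and~\ref{th:pcfatwdcover} are stated for \emph{smooth} invariant subvarieties, so one must know that $L=\bigcap_{j\in J}L_j$ is smooth; this is precisely the role of the weak transversality hypothesis carried along in the applications of this corollary. Granting that, the argument is essentially a matter of feeding Theorem~\ref{th:rotdombdry} into the elementary observation that a nonempty open set whose boundary lies in a proper analytic subset must equal the complement of that analytic subset, and then contradicting the degree count coming from Lemma~\ref{lem:topdegr}. I do not expect any essential difficulty beyond keeping track of the (total) invariance of $\pf(f^m,L)$, which is exactly what upgrades the restriction $f^m:V\to V$ to an honest covering map.
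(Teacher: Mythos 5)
Your argument is correct in substance and follows the same skeleton as the paper's proof: Theorem~\ref{th:rotdombdry} puts $\partial V$ inside $\pf(f^m,L)$, Theorem~\ref{th:pcfatwdcover} guarantees $f^m:L\to L$ is unbranched or PCF, the fact that a closed subset of a hypersurface does not disconnect $L$ shows $V$ contains $L\backslash\pf(f^m,L)$, and the contradiction comes from the degree count of Lemma~\ref{lem:topdegr}. The one real difference is the closing step: where the paper quotes Lemma~4.13 of Ueda to upgrade ``$V$ contains the complement of the post-critical set'' to ``$f^m:L\to L$ is an automorphism'' (and treats the unbranched and PCF cases separately), you give a self-contained covering/degree argument that handles both cases uniformly. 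That is a genuine, and welcome, simplification: it removes the dependence on the external lemma at the cost of only the elementary observation that an injective map cannot have a generic fiber of cardinality $d^{m\dim L}\geq 2$. Your preliminary remarks (smoothness of $L$, $f^m(L)=L$, extraction of the degree of $f^m|_L$ from Lemma~\ref{lem:topdegr} by multiplicativity) are consistent with how the corollary is used in the paper, where weak transversality or smooth intersections are indeed in force.

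One imprecision should be fixed. The connectedness argument only yields the inclusion $L\backslash\pf(f^m,L)\subseteq V$, not the equality $V=L\backslash\pf(f^m,L)$ that you assert: what is clopen in $L\backslash\pf(f^m,L)$ is $V\cap\bigl(L\backslash\pf(f^m,L)\bigr)$, and nothing rules out a priori that the rotation domain meets $\pf(f^m,L)$ (the paper is careful on this point, concluding only that $V$ \emph{contains} the complement of the post-critical set). Since your derivations of $(f^m)^{-1}(V)=V$, of $\crit(f^m,L)\cap V=\emptyset$, and hence of ``$f^m:V\to V$ is an unbranched covering with $d^{m\dim L}$ sheets'' all rest on that equality, they are not justified as written. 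The contradiction survives using only the inclusion: pick $y\in L\backslash\pf(f^m,L)$ outside the critical values of $f^m|_L$ (these lie in $\pf(f^m,L)$ anyway); then $y\in V$, its fiber under $f^m|_L$ consists of exactly $d^{m\dim L}\geq 2$ distinct points, and each preimage avoids $\pf(f^m,L)$ by forward invariance, hence lies in $V$ --- contradicting the injectivity of $f^m$ on the rotation domain. So the slip is cosmetic and easily patched, but the total-invariance step as you wrote it overreaches.
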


	\begin{proof}
		Assume that $f: L \to L$ does have a rotation domain $V$.
		By Theorem \ref{th:rotdombdry}, the boundary $\partial V$ is contained in 
		the post-critical set of $f: L \to L$. By Theorem \ref{th:pcfatwdcover}, 
		$f: L \to L$ is either unbranched or PCF.

		Let us first assume that $f: L \to L$ is 
		unbranched: then $\partial V=\emptyset$, so $L=V$.  In particular, $f: L \to L$ is an automorphism
		of $L$, which contradicts Lemma \ref{lem:topdegr} (indeed, $f$ must have algebraic degree at least $2$
		since it is PCF).
		
		Let us now assume that $f: L \to L$ is PCF instead of unbranched. Then $\partial V$
		is contained in an algebraic hypersurface of $L$. Since hypersurfaces of $L$ do not disconnect $L$, $L\backslash\partial V$ is connected,
		so $V=L\backslash\partial V$ and $V$ contains the complement of the post-critical set of $f: L \to L$.
		By Lemma 4.13 of \cite{ueda1998critical}, this means that $f: L \to L$ is in fact an automorphism of 
		$L$ and that $V=L$. Again, this contradicts Lemma \ref{lem:topdegr}.
	\end{proof}

	\section{Eigenvalues of periodic points of PCF endomorphisms}
	
	Recall the following definition from topological dynamics:
	
	\begin{defi}\label{def:lyapunstable}
		Let $X$ be a Hausdorff topological space, $f: X \to X$ a continuous map, and $p \in X$ a fixed point.
		The fixed point $p$ is said to be Lyapunov stable if for every neighborhood $V$ of $p$, there exists 
		an open set $U$ containing $p$ such that for all $n \in \N$, $f^n(U) \subset V$.
		We will also say that $p$ is Lyapunov unstable if $f$ has a local inverse $g$ fixing $p$ such that 
		$p$ is Lyapunov stable for $g$.
	\end{defi}

	\begin{rem}
		In the case where $X$ is a complex manifold and $f$ is holomorphic, a necessary (but not sufficient) 
		condition
		for Lyapunov stability is for the eigenvalues of $Df(p)$ to have modulus smaller or equal to one. 
		A sufficient (but not necessary) condition is that every eigenvalue of $Df(p)$ has modulus strictly
		smaller than one.
	\end{rem}

	\begin{lem}\label{lem:brinverse}
		Let $L$ be a complex manifold, and $f: L \to L$ a PCF ramified cover. Let 
		$p \in \reg(\pf(f,L)) \backslash \crit(f,L)$ be a Lyapunov unstable fixed point for $f: \pf(f,L) \to \pf(f,L)$.
		Then there is a simply connected open subset $U \subset L$ containing $p$ such that
		for all $n \in \N$, there is a well-defined holomorphic lift $g_n: U \to L$ of $f^n$ fixing $p$.
	\end{lem}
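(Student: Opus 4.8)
The plan is to construct the lifts as the iterates of a single local inverse branch of $f$ at $p$, the whole point being to show that these iterates are defined on a common neighbourhood of $p$. Since $p\notin\crit(f,L)$ and $f(p)=p$, the map $f$ is biholomorphic from a neighbourhood $W$ of $p$ onto a neighbourhood $W'$ of $p$; let $g\colon W'\to W$ be the inverse, so $g(p)=p$. Because $p\in\reg(\pf(f,L))$ and $\pf(f,L)$ is forward invariant, after shrinking $W$ and $W'$ we may assume that $f$ restricts to a biholomorphism of the connected smooth hypersurface $\pf(f,L)\cap W$ onto $\pf(f,L)\cap W'$; hence $g$ preserves $\pf(f,L)$ near $p$ and, by uniqueness of preimages, restricts on $\pf(f,L)\cap W'$ to the given Lyapunov stable inverse branch of $f\colon\pf(f,L)\to\pf(f,L)$. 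Moreover, since $f^j(p)=p$ for all $j$ and $p\notin\crit(f,L)$, we get $p\notin\crit(f^n,L)$ for every $n$, so $g_n:=g^{\circ n}$ is a genuine $n$-th inverse branch of $f$ fixing $p$, holomorphic on some neighbourhood $U_n$ of $p$ with $U_0\supseteq U_1\supseteq\cdots$ and $f^n\circ g_n=\id$ on $U_n$. It therefore suffices to show that $\bigcap_n U_n$ contains a neighbourhood of $p$, since one may then take $U$ to be any small ball inside it.

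First I would control the inverse branches along the post-critical set itself. By Lyapunov stability of the inverse branch of $f\colon\pf(f,L)\to\pf(f,L)$ at $p$, there is a neighbourhood $\mathcal V$ of $p$ in $\pf(f,L)$, relatively compact in $\pf(f,L)\cap W'$, and a smaller neighbourhood $\mathcal U_0$ of $p$ in $\pf(f,L)$ such that $g^n(\mathcal U_0)\subseteq\mathcal V$ for all $n\geq 0$. In particular every $g_n$ restricts to a well-defined holomorphic map on $\mathcal U_0$, and all of these maps take values in the fixed compact set $K:=\overline{\mathcal V}$, which is relatively compact in $W'$.

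The remaining step — and in my view the main obstacle — is to propagate this from the hypersurface $\pf(f,L)$ to a full ambient neighbourhood of $p$, i.e.\ to prevent the domains $U_n$ from shrinking down onto $\pf(f,L)$. Here I would invoke Ueda's normality theorem (Theorem~\ref{th:normalitylift}, which applies directly whenever $L$ is embedded in some $\pk$ with $f$ extending, as in all the intended applications): the family $(g_n|_{\mathcal U_0})_n$ is a family of lifts of the inclusion $\mathcal U_0\hookrightarrow L$ by iterates of $f$, hence is normal, and every subsequential limit is holomorphic and fixes $p$. Combining this with the fact that all the $g_n|_{\mathcal U_0}$ take values in the compact set $K$, on which $g$ is defined, one wants to run an equicontinuity argument showing that for some fixed ambient ball $U\ni p$ one has $g^j(x)\in W'$ for every $x\in U$ and every $j$, which is exactly the statement that every $g_n$ is defined on $U$. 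The delicate point is the behaviour of $g$ in the direction transverse to $\pf(f,L)$: normality on the hypersurface $\mathcal U_0$ only bounds the derivatives tangent to $\pf(f,L)$, so one must separately ensure that $f$ is non-contracting transverse to $\pf(f,L)$ at $p$ (equivalently, that the transverse multiplier of $f$ at $p$ has modulus at least one), a property that should be extracted from post-critical finiteness together with the structure of $\pf(f,L)$ near its regular points. Once the transverse direction is under control, $\bigcap_n U_n$ contains a ball and taking $U$ to be such a ball finishes the proof.
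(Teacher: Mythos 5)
Your reduction to the post-critical hypersurface is fine, but the step you yourself flag as ``the main obstacle'' is exactly the gap, and the fix you sketch does not close it. First, the property you want to extract -- that the transverse multiplier of $f$ at $p$ has modulus at least one -- is not available at this stage: excluding attracting behaviour at such fixed points is part of what Theorem B proves \emph{later}, using this very lemma together with Lemma \ref{lem:notinpf}, so your argument is circular in the paper's logic. Second, even granted a transverse multiplier of modulus $\geq 1$ at the single point $p$, that would not give what you need: a neutral (e.g.\ parabolic or non-linearizable) transverse eigenvalue is compatible with orbits of $g$ escaping every fixed neighbourhood in the transverse direction, so it would not yield a uniform ball $U$ with $g^{\circ j}(U)\subset W'$ for all $j$. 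In fact that uniform statement is strictly stronger than what the lemma asserts, and there is no reason it should hold: the lemma only requires lifts $g_n$ of $f^n$ defined on a common $U$, not that these lifts keep $U$ inside a fixed chart around $p$. Finally, your appeal to Theorem \ref{th:normalitylift} restricts you to the case where $L$ sits in $\pk$ with $f$ extending, whereas the lemma is stated (and used) for a general complex manifold $L$; and in any case normality of $(g_n|_{\mathcal U_0})$ controls nothing transverse to $\pf(f,L)$, as you note.

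The idea you are missing is that one should not try to iterate a single inverse branch on a common domain, but rather \emph{analytically continue} it. The paper fixes a chart $V\ni p$ on which the local inverse $g_1$ exists, takes $U$ simply connected and small enough (using Lyapunov stability along $\pf(f,L)$) that the restricted inverse iterates of $U\cap\pf(f,L)$ stay in $V\cap\pf(f,L)$, and constructs $g_{n+1}:=g_{1,n}\circ g_n$ by induction, where $g_{1,n}$ is an analytic continuation of $g_1$ over $V_n:=g_n(U)$. Such a continuation exists because the branch locus of $f$ is contained in $\pf(f,L)$, so local inverses exist along any path in $V_n$ once one knows $V_n\cap\pf(f,L)\subset V$ (this inclusion is the inductive invariant, propagated by forward invariance of $\pf(f,L)$); since $g_n$ is univalent, $V_n$ is simply connected and the Monodromy Theorem makes the continuation single-valued. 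The images $V_n$ are allowed to wander far from $p$ in $L$ -- only their intersections with $\pf(f,L)$ are controlled -- so no transverse eigenvalue information, and no normality theorem, is needed, and the argument works on an arbitrary complex manifold.
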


	\begin{proof}
		Let us begin with the case $n=1$. Let $V$ be a small enough neighborhood of $p$ in $L$ 
		that $f$ admits a unique local inverse $g_1$ defined on $V$, with $g_1(p)=p$.
		Let $U$ be an open subset given by the Lyapunov stability of $p$ as a fixed point of $g_1$.
		Up to restricting $U$, we can take $U$ to be simply connected. 
		
		We will prove the following statement by induction: for all $n \in \N^*$, there is
		a well-defined holomorphic lift $g_n: U \to L$ of $f^n$ fixing $p$, such that 
		$$g_n(U) \cap \pf(f,L) \subset g_1^n(U) \cap \pf(f,L).$$
		
		Now assume that $g_n$ is constructed, and let us set $V_n:=g_n(U)$.
		Let us prove that $g_1$ admits analytic continuations along any path in $V_n$ starting at $p$.
		Let $\gamma: [0,1] \to V_n$ be a path with $\gamma(0)=p$. By the induction hypothesis and the fact
		that $p$ is a Lyapunov unstable point,
		$$V_n \cap \pf(f,L) \subset g_1^n(U) \cap \pf(f,L) \subset V \cap \pf(f,L).$$ 
		Using the compacity of $\gamma([0,1])$, we may find  a finite
		subcover of $\gamma([0,1])$ by small balls $(U_i)_{i \leq m}$ such that either $U_i \subset V$
		or $U_i \cap \pf(f,L)=\emptyset$, and then define an analytic continuation of $g_1$ along $\gamma$
		in the usual way, using compatible local inverses on each $U_i$.
		Since $U$ is simply connected and $g_n$ is univalent, $V_n$ is also simply connected, so by
		the Monodromy Theorem $g_1$ extends to a single-valued map $g_{1,n}$ defined on $V_n$
		\footnote{We use the notation $g_{1,n}$ to emphasize the fact that the values of this extension depend 
			a priori on $n$ on $V_n \backslash V$.}. We then set:
		$$g_{n+1}:=g_{1,n} \circ g_n \text{ on }U.$$
		Clearly this defines a lift of $f^n$ fixing $p$.
		
		Now set $V_{n+1}:=g_{n+1}(U)$. To finish the proof by induction, we need to prove that
		$V_{n+1} \cap \pf(f,L) \subset g_1^{n+1}(U) \cap \pf(f,L)$. Let $x \in V_{n+1} \cap \pf(f,L)$, and let 
		$y$ be the unique element of $V_n$ such that $x=g_{1,n}(y)$. Then $f(x)=y \in \pf(f,L)$ by forward invariance
		of $\pf(f,L)$, 
		so by the induction hypothesis, $y \in g_1^n(U) \cap \pf(f,L)$. So finally $x=g_{1,n}(y)=g_1(y) \in g_1^{n+1}(U) \cap \pf(f,L)$
		(the second equality comes from the fact that $g_{1,n}=g_1$ on $V$),
		and the proof is finished.
	\end{proof}

	\begin{lem}\label{lem:regandcrit}
		Let $f: \pk \to \pk$ be an endomorphism, and assume that $L$ is a subvariety such that 
		$L=f(L)$ and $f: L \to L$ is PCF. Assume that $p=f(p) \in \reg(\pf(f,L)) \cap \crit(f,L)$.
		Up to replacing $f$ by an iterate, there is a fixed component 
		$P_i$ of $\pf(f,L)$ such that the image of $Df(p)_{|T_p L}$
		is included in $T_p P_i$.
	\end{lem}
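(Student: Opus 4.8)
The plan is to work in local holomorphic coordinates at $p$ and to distinguish two cases, according to whether or not $f$ is infinitesimally transverse at $p$ to the component of $\pf(f,L)$ through $p$. If $\dim L\le 1$ there is nothing to prove ($T_pP_i=\{0\}$ and a critical point of a self-map of a curve has vanishing derivative), so assume $n:=\dim L\ge 2$. Since $p\in\reg(\pf(f,L))$, near $p$ the set $\pf(f,L)$ is a smooth hypersurface coinciding with a single irreducible component $P_i$; as $f(p)=p$, $f|_L$ is finite, and $\pf(f,L)$ is forward invariant, $f(P_i)$ is an $(n-1)$-dimensional irreducible component of $\pf(f,L)$ through $p$, hence $f(P_i)=P_i$, so $P_i$ is fixed (replacing $f$ by an iterate is in fact unnecessary here). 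One then picks coordinates $z=(z_1,\dots,z_n)$ centred at $p$ with $P_i=\{z_n=0\}$ and writes $f=(f_1,\dots,f_n)$; from $f(P_i)\subset P_i$ one gets $f_n(z_1,\dots,z_{n-1},0)\equiv 0$, so $\partial_j f_n(0)=0$ for $j<n$ and $Df(0)$ is block lower-triangular, with blocks $A:=D(f|_{P_i})(0)$ of size $n-1$ and $\mu:=\partial_nf_n(0)$. Criticality of $p$ reads $\det A\cdot\mu=0$.

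If $\mu=0$, the last row of $Df(0)$ vanishes, so $\mathrm{Im}(Df(0))\subset\{z_n=0\}=T_pP_i$, and $P_i$ is the desired component.

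The heart of the argument is to show that $\mu\neq 0$ is impossible. In that case $\det A=0$, so $p\in\crit(f|_{P_i})$, and Lemma \ref{lem:critintersect} (applied to $f\colon L\to L$ with the locally smooth, fixed hypersurface $P_i$; its proof being local, this is legitimate) produces an irreducible component $C$ of $\crit(f,L)$, distinct from $P_i$, with $p\in C$ and $\dim C=n-1$. Then $f(C)$ is an $(n-1)$-dimensional irreducible analytic set contained in $\cv(f,L)\subset\pf(f,L)$ and passing through $p$, hence equals $P_i$; so $P_i\subset\cv(f,L)$, and therefore $\cv(f,L)=P_i=\{z_n=0\}$ near $p$. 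Consequently $f^{-1}(\cv(f,L))=\{f_n=0\}$ near $p$, which is \emph{smooth} there because $\nabla f_n(0)=(0,\dots,0,\mu)\neq 0$. Since $C\subset\crit(f,L)\subset f^{-1}(\cv(f,L))$ and $P_i\subset f^{-1}(P_i)\subset f^{-1}(\cv(f,L))$, and all three sets have dimension $n-1$ while $\{f_n=0\}$ is irreducible at $p$, the germs at $p$ of $C$, of $P_i$ and of $\{f_n=0\}$ all coincide; in particular $\crit(f,L)\supset P_i$ near $p$. Restricting to $P_i$ and using that $\mu\neq 0$ near $p$, this gives $\det A\equiv 0$ on $P_i$ near $p$, i.e.\ $\crit(f|_{P_i})$ contains a neighbourhood of $p$ in $P_i$. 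But $f|_{P_i}\colon P_i\to P_i$ is a finite surjective self-map of the irreducible variety $P_i$ of dimension $n-1\ge 1$, hence a local biholomorphism at a generic smooth point, so $\det D(f|_{P_i})$ cannot vanish on a nonempty open subset of $P_i$ — a contradiction. Thus $\mu=0$, completing the proof.

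I expect the main obstacle to be exactly this last step: one must upgrade the a priori harmless information ``$p$ is a critical point of $f|_{P_i}$'' to the much stronger ``$P_i$ is entirely critical for $f|_{P_i}$ near $p$''. The mechanism that forces this is the smoothness of $f^{-1}(\cv(f,L))$ at $p$ — which holds precisely because $\mu\neq 0$ — combined with Lemma \ref{lem:critintersect}: once $f^{-1}(\cv(f,L))$ is smooth of the expected dimension, the auxiliary critical component $C$ has no room to be anything other than the germ of $P_i$ itself.
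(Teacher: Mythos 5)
Your proposal is correct, but it follows a genuinely different route from the paper's. The paper passes to an iterate and invokes the Grauert--Remmert local normal form (\cite{grauert1958komplexe}, Satz 10): since $f(p)=p$ lies in $\reg(\pf(f,L))$, the branched cover $f:L\to L$ can be written near $p$ as $(z_1,z_2,\ldots)\mapsto(z_1^m,z_2,\ldots)$, from which the image of $Df(p)_{|T_pL}$ is immediately seen to be (exactly) $T_p\pf(f,L)=T_pP_i$. You instead argue by hand: the unique component $P_i$ through $p$ is automatically fixed --- a correct observation, since $f(P_i)$ is an irreducible component of the forward-invariant, pure codimension one set $\pf(f,L)$ passing through the regular point $p$, so the ``up to an iterate'' in the statement is indeed not needed --- and the invariance $f(P_i)\subset P_i$ makes $Df(p)$ block-triangular, reducing the lemma to showing $\mu:=\partial_nf_n(p)=0$. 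Your exclusion of $\mu\neq 0$ is sound: Lemma \ref{lem:critintersect} (whose proof is local, so its use near the smooth point $p$ of the fixed hypersurface $P_i$ is legitimate) produces a critical component $C\neq P_i$ through $p$, and the smoothness of $f^{-1}(\cv(f,L))=\{f_n=0\}$ at $p$ (precisely because $\mu\neq0$) forces the germs of $C$ and $P_i$ at $p$ to coincide. Note that at that point you are already done: two distinct irreducible hypersurfaces cannot have equal germs at a point, so the final detour through ``$f_{|P_i}$ is critical on an open set'' (also correct) is unnecessary. In terms of trade-offs, the paper's argument is shorter and gives equality of the image with $T_pP_i$, at the cost of citing a structural theorem; yours is elementary and self-contained apart from Lemma \ref{lem:critintersect}, and it shows the iterate is superfluous. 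Both your proof and the paper's implicitly use that $L$ is smooth at $p$, which holds in the application to Theorem B.
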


	\begin{proof}
		Up to replacing $f$ by an iterate, we may assume that all irreducible
		components of $\crit(f,L)$ are mapped by $f$ to fixed 
		post-critical components.
		According to \cite{grauert1958komplexe}, Satz 10, we can find local coordinates
		(different in domain and range) in which $f: L \to L$ takes locally near $p$ (in $L$) 
		the form $(z_1, z_2, \ldots) \mapsto (z_1^m, z_2, \ldots)$.
		In particular, $p$ belongs to the regular part of $\crit(f,L)$, and
		$Df(p)$ maps $T_p \crit(f,L)$ injectively into $T_p \pf(f,L)$.
		Since $Df(p)$ has rank at most $\dim L-1$ at $p$, the image of $Df(p)$ must be exactly $T_p \pf(f,L)$.
	\end{proof}
	
	%
	%
	%
	%
	
	The following lemma builds on ideas from \cite{fornaess1994complex}:

	\begin{lem}\label{lem:notinpf}
		Let $f: \pk \to \pk$ be an endomorphism, and assume that $L$ is a subvariety such that 
		$L=f(L)$ and $f: L \to L$ is weakly PCF. Let $p=f(p) \notin  \crit(f,L)$, and assume that
		there is an open neighborhood $U$ of $p$ in $L$ such that
		for all $n \in \N$ there is a holomorphic lift $g_n: U \to L$ of $f^n$ with
		$g_n(p)=p$. Then $p$ is Lyapunov unstable, and at least one of its eigenvalues is repelling.
		Moreover, $p$ is linearizable through an analytic submanifold tangent to the subspace associated to 
		neutral eigenvalues.
	\end{lem}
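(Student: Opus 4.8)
The plan is to combine Ueda's normality theorem with elementary linear algebra on $Df(p)$, and then to upgrade this to a linearization along the neutral subspace by taking limits of the inverse branches $g_n$. First I would feed the family $(g_n)$ into Theorem \ref{th:normalitylift}: with $Z=U$ and $h\colon U\hookrightarrow L\subset\pk$ the inclusion, the $g_n$ satisfy $f^n\circ g_n=h$, so they are a family of lifts of $h$ by iterates of $f$, hence a normal family on $U$. Since $p\notin\crit(f,L)$ the differential $A:=Df(p)\colon T_pL\to T_pL$ is invertible and, $g_n$ being the inverse branch of $f^n$ fixing $p$, one has $Dg_n(p)=A^{-n}$. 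Reading convergence in a chart of $L$ centred at $p$ (the basepoints $g_n(p)=p$ stay in the chart), every locally uniformly convergent subsequence $g_{n_j}\to g$ forces $A^{-n_j}=Dg_{n_j}(p)\to Dg(p)$, a fixed matrix, so the sequence $(A^{-n})_{n\ge 0}$ is bounded. A matrix power sequence $(B^n)_n$ is bounded precisely when every eigenvalue of $B$ has modulus $\le 1$ and those of modulus $1$ are semisimple; applied to $B=A^{-1}$ this shows every eigenvalue of $A$ has modulus $\ge 1$ and that the neutral eigenvalues (those of modulus $1$) are semisimple. In particular $p$ has no attracting eigenvalue.

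Next I would extract Lyapunov unstability from the same normality. The family $(g_n)$ is equicontinuous near $p$; shrinking $U$ to a small ball $V$ on which the $g_n$ are uniformly $\varepsilon$-close to the constant $p$ (with $\varepsilon$ less than the radius of $V$) gives $g_n(V)\subset V$ for all $n$. On $V$ the map $g_n$ agrees with the $n$-th iterate $g_1^{\circ n}$ of the local inverse branch $g_1$ of $f$ fixing $p$ — both are inverse branches of $f^n$ fixing $p$, hence coincide near $p$ by the identity principle and then on all of $V$ by induction — so $\{g_1^{\circ n}\}$ is a normal family on the fixed ball $V$, hence equicontinuous at $p$. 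That is exactly the statement that $p$ is Lyapunov stable for $g_1$, i.e. (Definition \ref{def:lyapunstable}) that $p$ is Lyapunov unstable for $f$.

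For the linearization, write $T_pL=E\oplus F$ where $E$ is the sum of the eigenspaces of $A$ for the neutral eigenvalues and $F$ the sum of the generalized eigenspaces for the eigenvalues of modulus $>1$; $A$ preserves this splitting, $A|_E$ is semisimple with spectrum on the unit circle, and $\overline{\{(A|_E)^n:n\ge 1\}}$ is a compact subgroup of $\mathrm{GL}(E)$. I would choose $n_j\to\infty$ with $(A|_E)^{n_j}\to\mathrm{Id}_E$, so $A^{-n_j}|_E\to\mathrm{Id}_E$ and $A^{-n_j}|_F\to 0$, and then (passing to a subsequence, using normality) take $\sigma:=\lim_j g_1^{\circ n_j}$ locally uniformly near $p$: then $\sigma(p)=p$ and $D\sigma(p)$ is the projection onto $E$ along $F$. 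The elementary identity $g_1^{\circ n}(x)=g_1^{\circ(n+1)}(f(x))$, valid near $p$, passes to the limit and yields the semiconjugacy $\sigma\circ f=f\circ\sigma$ on a neighbourhood $U'$ of $p$; since $\sigma$ restricted to a small disc through $p$ tangent to $E$ is an immersion at $p$, the set $W^c:=\sigma(U')$ is, after shrinking, an analytic submanifold with $T_pW^c=E$, and the semiconjugacy gives $f(W^c)\subset W^c$. Finally the maps $g_n|_{W^c}$ form a normal family of inverse branches of the iterates of $f|_{W^c}$ with differentials at $p$ (namely $A^{-n}|_E$) lying in a compact subgroup of $\mathrm{GL}(E)$, so every subsequential limit of $\{(f|_{W^c})^n\}$ is a biholomorphism and the classical linearization theorem for fixed points with relatively compact iteration (Bochner, Cartan) applies: $f|_{W^c}$ is holomorphically conjugate to $A|_E$. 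This is the last assertion of the lemma, $W^c$ being the asserted analytic submanifold.

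It then remains to exhibit a repelling eigenvalue, and here I would argue by contradiction: if every eigenvalue of $A$ had modulus exactly $1$, then $F=0$, so $W^c$ is a full neighbourhood of $p$ and, by the previous paragraph, $f$ is holomorphically conjugate near $p$ to the semisimple linear map $A$ whose spectrum lies on the unit circle; since $\overline{\{A^n:n\ge 1\}}$ is a compact group, some subsequence $A^{n_j}\to\mathrm{Id}$, hence $f^{n_j}\to\mathrm{Id}$ locally uniformly near $p$, so $p$ lies in a rotation domain of $f\colon L\to L$, contradicting Corollary \ref{coro:pcfnorot} (recall $f$ is PCF, hence of algebraic degree $\ge 2$, and $f|_L$ is weakly PCF). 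Thus $A$ has an eigenvalue of modulus $>1$. I expect the real work, and the main obstacle, to be in the third paragraph: carefully controlling the nested domains in the limiting argument, checking that $\sigma$ restricts to an embedding of the $E$-disc and that $W^c$ is genuinely $f$-invariant and carries the asserted linearization — this should be carried out following the techniques of \cite{ueda1998critical} and \cite{fornaess1994complex}.
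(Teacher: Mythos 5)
Your overall strategy is the paper's: Ueda's normality theorem for the lifts $g_n$, boundedness of $Dg_n(p)=Df(p)^{-n}$ to rule out attracting eigenvalues and force semisimplicity of the neutral part, equicontinuity of $\{g_1^{\circ n}\}$ for Lyapunov unstability, a rotation-domain contradiction via Corollary \ref{coro:pcfnorot} for the repelling eigenvalue, and limits of inverse branches for the center-manifold linearization. The first three assertions are handled correctly and essentially as in the paper (the paper invokes Lemma 6.8 of \cite{fornaess1994complex} where you invoke Bochner--Cartan; in the case $F=0$ your map $\sigma$ is locally invertible at $p$, so that contradiction argument is sound).

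The genuine gap is in the ``Moreover'' clause, precisely where you yourself flag the main obstacle: you assert that $W^c:=\sigma(U')$ is, after shrinking, an analytic submanifold with $T_pW^c=E$ because $\sigma$ restricted to an $E$-disc is an immersion at $p$. That does not follow: $\sigma$ need not have constant rank near $p$, and the image of all of $U'$ under a map whose rank at $p$ is $\dim E$ but may jump nearby can fail to be a manifold and can strictly contain the image of the $E$-disc; nothing in your argument collapses the $F$-directions into that disc. The paper closes exactly this hole with a different device: after choosing $h=\lim_j g_{n_j}$, pass to a further subsequence with $n_{j+1}-n_j\to\infty$ and a second limit $k=\lim_j g_{n_{j+1}-n_j}$, so that $k\circ h=h$ and $Dk(p)$ is the projection onto $E^c$ along $E^u$. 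Then $S:=\{z:\,k(z)=z\}$ is an analytic set with $\dim S\le\dim E^c$, it contains $h(U)$, and since $\operatorname{rank} Dh(p)=\dim E^c$ one concludes that locally near $p$ the set $S$ has dimension exactly $\dim E^c$ and equals $h(U)$; this fixed-point set is the invariant submanifold, $Dg_1(p)|_{T_pS}$ is unitary by the Jordan-block argument, and Lemma 6.8 of \cite{fornaess1994complex} gives the linearization on $S$. Replacing your image-of-$\sigma$ claim by this fixed-point-set construction would complete your proof.
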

	
	\begin{proof}
		By Theorem \ref{th:normalitylift}, the family $\{g_n, n \in \N\}$ is normal. 
		Moreover, $Dg_n(p)=Df(p)^{-n}$, so $Df(p)$ cannot have any attracting eigenvalue, 
		for otherwise $Dg_n(p)$ would blow up, contradicting the normality of the $g_n$, $n \in \N$.
		
		Therefore all eigenvalues must be either neutral or repelling; assume for a contradiction
		that none of them are repelling. Then every eigenvalue of $Df(p)$, hence of $Dg_1(p)$, have modulus one.
		Moreover, the sequence $Dg_n(p)=Dg_1^n(p)$ is bounded; by considering a Jordan decomposition, one 
		easily sees that this implies that $Dg_1(p)$ 
		is unitary. By Lemma 6.8 of \cite{fornaess1994complex},
		$g_1$ and therefore $f$ are linearizable in an open neighborhood of $p$ in $L$. This implies that $p$
		belongs to a rotation domain of $f: L \to L$, which contradicts Corollary \ref{coro:pcfnorot}.
		
		Let us prove that $p$ is Lyapunov unstable: let 
		$V$ be an open neighborhood of $p$. Up to reducing $V$, we may assume that each $g_n$ coïncide with 
		$g_1^n$ on $V$. Since $\{g_1^n, n \in \N\}$ is normal on $V$, there is a neighborhood $U \subset V$ 
		such that for all $n \in \N$, $g_1^n(U) \subset V$. Thus $p$ is Lyapunov unstable.
		
		Finally, let us prove the linearizing statement. Let
		$$T_p L = E^c \oplus E^u$$
		be the decomposition of $T_p L$ into invariant subspaces associated to neutral and repelling eigenvalues
		respectively. We will prove that there is a germ of invariant analytic submanifold
		$W_{\mathrm{loc}}^c(p)$ tangent to $E^c$ on which $f$ is linearizable. Let 
		$h=\lim_{j \to \infty} g_{n_j}$
		be a limit function. Up to extracting a further subsequence, we may assume that $n_{j+1}-n_j \to \infty$,
		and that $g_{n_{j+1}-n_j}$ converges to some other limit function $k: U \to L$. Observe that we have
		$$k \circ h = h.$$
		Let $S:=\{z \in U,\, k(z)=z\}$. This is an analytic subset of $U$, and since $Dk(p)$ is the projection
		on $E^c$ with kernel $E^u$, we have $\dim S \leq \dim E^c$. Moreover, the rank of $Dh(p)$ is 
		equal to $\dim E^c$, and $h(U) \subset S$, so in fact the local dimension of $S$ at $p$ is exactly 
		$\dim E^c$ and $h(U)=S$ locally near $p$. It is easy to check that $S$ is invariant under $g_1$.
		Again, using the fact that the iterates of $Dg_1(p)$ are bounded and looking at its Jordan decomposition, 
		we can see that all the Jordan blocks associated to neutral eigenvalues must have size at most 1, and so 
		$Dg_1(p)$ restricted to $T_p S$ is unitary. Again, by Lemma 6.8 of \cite{fornaess1994complex}, this implies
		that $g_1$ is linearizable on $W_{\mathrm{loc}}^c(p):=S$, and therefore the same holds for $f$.
	\end{proof}

	Let us now prove Theorem B on the eigenvalues of cycles of PCF endomorphisms of $\pk$:

	\begin{theoB}
		Let $f: \pk \to \pk$ be a PCF endomorphism such that the irreducible components of
		$\pf(f,\pk)$ are weakly transverse.
		Let $p$ be a periodic point of period $l$ for $f$. Then 
		\begin{enumerate}
			\item $p$ has no parabolic or non-zero attracting eigenvalue 
			\item either $p$ has at least one repelling eigenvalue, or else all eigenvalues are zero
			(ie $Df^l(p)$ is nilpotent)
			\item if $p$ doesn't have zero as an eigenvalue, then $p$ is Lyapunov unstable
			and at least one of its eigenvalues is repelling.
		\end{enumerate}
	\end{theoB}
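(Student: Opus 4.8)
The plan is to reduce to a fixed point, locate $p$ within the stratification of $\pk$ by intersections of post-critical components, and then feed the resulting local picture into Lemma~\ref{lem:notinpf} and Ueda's normality theorem (Theorem~\ref{th:normalitylift}). First I would replace $f$ by $f^M$, where $M$ is a common multiple of $l$, of an exponent after which $f$ fixes every irreducible component of $\pf(f,\pk)$ (hence every stratum $\bigcap_{j\in J}L_j$), and of the orders of those eigenvalues of $Df^l(p)$ which are roots of unity. This replacement does not change $\pf(f,\pk)$, keeps the algebraic degree $\geq 2$, and leaves all three conclusions invariant: a parabolic eigenvalue becomes the eigenvalue $1$, the classes ``zero'', ``nonzero attracting'' and ``repelling'' are preserved, and for an invertible germ fixing $p$, Lyapunov stability is unchanged under passing to an iterate. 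So I may assume $f(p)=p$, $f$ fixes every stratum, every root-of-unity eigenvalue of $Df(p)$ equals $1$, and it suffices to prove that $Df(p)$ has no eigenvalue of modulus in $(0,1)$ and no eigenvalue equal to $1$; that $Df(p)$ is nilpotent or has an eigenvalue of modulus $>1$; and that $Df(p)$ invertible implies $p$ Lyapunov unstable.

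Next I would build a chain of fixed smooth subvarieties $\pk=X_0\supset X_1\supset\cdots\supset X_N=:L$ through $p$, each of codimension $1$ in the previous one: given $X_{j-1}$, if $p\in\crit(f|_{X_{j-1}},X_{j-1})$ then $f|_{X_{j-1}}$ is PCF and Lemma~\ref{lem:regandcrit} (after a further iterate, absorbed into $M$) yields a fixed post-critical component $P\subsetneq X_{j-1}$ through $p$ with $Df(p)(T_pX_{j-1})\subseteq T_pP$, and I set $X_j:=P$; if $p\notin\crit(f|_{X_{j-1}},X_{j-1})$ but $p\in\pf(f|_{X_{j-1}},X_{j-1})$, I set $X_j$ to be an irreducible component of $\pf(f|_{X_{j-1}},X_{j-1})$ through $p$; if $p\notin\pf(f|_{X_{j-1}},X_{j-1})$, I stop. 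Smoothness of the $X_j$ and the fact that each $f|_{X_j}$ is unbranched or PCF come from Theorem~\ref{th:pcfatwdcover} and weak transversality. Each $T_pX_j$ is a $Df(p)$-invariant hyperplane of $T_pX_{j-1}$, so the spectrum of $Df(p)$ is that of $Df(p)|_{T_pL}$ together with the transverse eigenvalues $\mu_j$ on $T_pX_{j-1}/T_pX_j$. Applying Lemma~\ref{lem:critintersect} to the inclusion $X_j\subset X_{j-1}$ gives $\crit(f|_{X_j},X_j)\subseteq\bigcup(X_j\cap C_i)$ over the components $C_i\neq X_j$ of $\crit(f|_{X_{j-1}},X_{j-1})$, whence $p\in\crit(f|_{X_j},X_j)\Rightarrow p\in\crit(f|_{X_{j-1}},X_{j-1})$; thus the ``critical'' construction steps are exactly the first $j_0$ of them for some $j_0\in\{0,\dots,N\}$, and for $j\leq j_0$ the inclusion $Df(p)(T_pX_{j-1})\subseteq T_pX_j$ forces $\mu_j=0$.

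Then I would analyse the non-critical part. For $j_0\leq i\leq N$ one has $p\notin\crit(f|_{X_i},X_i)$, so $Df(p)|_{T_pX_i}$ is invertible. At the bottom, $p\notin\pf(f|_L,L)$ (its components, of the form $L\cap L_{i'}$, cannot contain $p$), and since $\pf((f|_L)^n,L)=\pf(f|_L,L)$ for all $n$, over a simply connected neighbourhood $U\subset L$ of $p$ avoiding $\pf(f|_L,L)$ each $f^n|_L$ has a single-valued inverse branch $g_n\colon U\to L$ with $g_n(p)=p$; these are lifts of $U\hookrightarrow\pk$ by iterates of $f$, hence a normal family by Theorem~\ref{th:normalitylift}. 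Climbing up, at each non-critical step $X_i\subset X_{i-1}$ ($i>j_0$) Lemma~\ref{lem:brinverse} extends the $g_n$ to a neighbourhood of $p$ in $X_{i-1}$, after which Lemma~\ref{lem:notinpf} applied to $f|_{X_{i-1}}$ shows $Df(p)|_{T_pX_{i-1}}$ has no attracting eigenvalue, has an eigenvalue of modulus $>1$ when $\dim X_{i-1}\geq 1$, and that $f|_{X_{i-1}}$ is linearizable on a germ $W^c$ tangent to its neutral eigenspace with unitary linear part. If $1$ were an eigenvalue there, $f|_{W^c}$ would be conjugate to a unitary linear map with a nontrivial fixed subspace, producing a positive-dimensional set of fixed points of $f$, contradicting $\deg f\geq 2$ via Lemma~\ref{lem:topdegr}. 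Hence every eigenvalue of $Df(p)|_{T_pX_{j_0}}$ has modulus $>1$ or is irrationally neutral, at least one being repelling if $\dim X_{j_0}\geq 1$, and $p$ is Lyapunov unstable for $f|_{X_{j_0}}$ — for $f$ itself when $j_0=0$.

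Finally, I read off the conclusion: the spectrum of $Df(p)$ is $\{\mu_1,\dots,\mu_{j_0}\}=\{0,\dots,0\}$ together with the spectrum of $Df(p)|_{T_pX_{j_0}}$, which is repelling-or-irrationally-neutral, giving part (1); if $j_0=0$ then $Df(p)$ is invertible, has a repelling eigenvalue, and $p$ is Lyapunov unstable, giving (2) and (3); if $j_0\geq 1$ and $\dim X_{j_0}\geq 1$ then $0$ is an eigenvalue (so (3) is vacuous) while a repelling eigenvalue exists, giving (2); and if $j_0\geq 1$ with $\dim X_{j_0}=0$ then $L=X_{j_0}=\{p\}$, all eigenvalues are the $\mu_i=0$, so $Df(p)$ is nilpotent. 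I expect the main obstacle to be the ascent step: extracting from Lemma~\ref{lem:notinpf} at level $X_i$ exactly the Lyapunov instability on the post-critical set needed to invoke Lemma~\ref{lem:brinverse} at level $X_{i-1}$, together with the case where $p$ lies on several post-critical components of $f|_{X_{i-1}}$ (so is not in its regular part), which will likely require an iterated version of Lemma~\ref{lem:brinverse} adjoining one post-critical component at a time, and a correspondingly mild extension of Lemma~\ref{lem:regandcrit} at such points.
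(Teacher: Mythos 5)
Your overall strategy --- reduce to a fixed point, descend through the post-critical stratification, obtain vanishing transverse eigenvalues at critical steps via Lemma~\ref{lem:regandcrit}, and climb back up at non-critical steps with Lemma~\ref{lem:brinverse}, Lemma~\ref{lem:notinpf} and Ueda's normality theorem (Theorem~\ref{th:normalitylift}) --- is in the same spirit as the paper's proof, reorganized along a single descending chain $X_0\supset X_1\supset\cdots$ instead of a descending induction over \emph{all} strata of the post-critical stratification containing $p$. The genuine gap is exactly the point you flag at the end: the case where $p$ lies in $\sing(\pf(f|_{X_{i-1}},X_{i-1}))$, i.e.\ on several post-critical components of a stratum. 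Both repairs you propose --- an ``iterated version'' of Lemma~\ref{lem:brinverse} adjoining one component at a time, and a ``mild extension'' of Lemma~\ref{lem:regandcrit} to such points --- are left unproved, and neither is routine: the proof of Lemma~\ref{lem:regandcrit} rests on the Grauert--Remmert normal form $(z_1,z_2,\ldots)\mapsto(z_1^m,z_2,\ldots)$, which is unavailable when several critical/post-critical components cross at $p$, and the monodromy argument in Lemma~\ref{lem:brinverse} uses that near $p$ the post-critical set is the single smooth component on which Lyapunov instability is already known, which again fails at a crossing. So as written the singular case --- which does occur, since $p$ can be a deep intersection point of components of $\pf(f,\pk)$ --- is not handled.

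The paper resolves this case by a different mechanism, and this is precisely where the weak transversality hypothesis (which your argument uses only through smoothness of the strata) does its real work: if $p$ lies on two distinct components $P_1,P_2$ of the next-deeper stratum, Lemma~\ref{lem:pairwisetransverse} gives $T_pL\subset T_pP_1+T_pP_2$, both subspaces are $Df(p)$-invariant, so every eigenvalue of $Df(p)|_{T_pL}$ already occurs on $T_pP_1$ or on $T_pP_2$, and the conclusion follows from the induction hypothesis applied to those two strata --- no inverse branches across the singular set and no normal form at $p$ are needed. Note also that this fix does not graft directly onto your single-chain bookkeeping: at a crossing you need the inductive conclusions for \emph{both} $P_1$ and $P_2$, while your chain records only one of them, so repairing the argument forces you to carry the induction over all strata containing $p$ (equivalently, a tree of chains) --- which is exactly the structure of the paper's proof. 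The remaining ingredients of your outline (the $\mu_j=0$ count at critical steps via Lemma~\ref{lem:critintersect}, the base of the climb where $p\notin\pf(f|_L,L)$, the exclusion of root-of-unity eigenvalues by producing a positive-dimensional fixed locus and contradicting Lemma~\ref{lem:topdegr}, with rotation domains excluded by Corollary~\ref{coro:pcfnorot}) are sound and consistent with the paper.
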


	\begin{proof}
		Up to replacing $f$ by an iterate, we lose no generality in considering only fixed points
		and in assuming that all irreducible components of the $\Omega_m$, $0 \leq m \leq k$, are fixed.
		Let $p=f(p)$ be a fixed point of $f$.
		
		If $p \notin \pf(f,\pk)$, 
		we can choose a simply connected neighborhood $U$ of $p$ in $L$, and 
		for all $n \in \N$ we have a well-defined inverse branch $g_n: L \to L$ fixing $p$.
		Then by Lemma \ref{lem:notinpf}, we are done.
		Therefore we will now assume that $p \in \pf(f,\pk)$. 
		
		Let $m_0$ be the greatest $m \leq k-1$
		such that $p \in \Omega_m$. We shall prove the following by descending induction on the codimension $m$, 
		for $0 \leq m \leq m_0$: \\
		
		For any irreducible component $L$ of $\Omega_m$ containing $p$:
		\begin{enumerate}
			\item the non-zero eigenvalues of $p$ associated to vectors tangent to
			$L$
			are not attracting or parabolic
			\item if none of those eigenvalues are repelling, then they are all 0 
			\item if $p \notin \crit(f,L)$, then $p$ 
			is 
			a Lyapunov unstable fixed point of $f$ in $L$.
		\end{enumerate}

		If $m_0=k-1$ and $L$ is an irreducible component of $\Omega_{k-1}$ containing $p$, then 
		$f: L \to L$ is weakly PCF according to Theorem \ref{th:pcfatwdcover}, and 
		$L$ is a compact Riemann surface. Therefore $L$ must be either a torus or a projective line
		(indeed, in all other cases, $f$ would have to restrict to a degree one self-map of $L$, contradicting Lemma \ref{lem:topdegr}).
		If $L$ is a torus, then all cycles are repelling, and if $L$ is a projective line, it is classical
		that $f: L \to L$ may only have super-attracting or repelling cycles.
		
		If $m_0<k-1$ and $L$ is an irreducible component of $\Omega_{m_0}$ containing $p$,  then by Theorem \ref{th:pcfatwdcover}, $f: L \to L$ is an unbranched cover, and $\pf(f,L)=\emptyset$. In particular, 
		for any simply connected neighborhood $U$ of $p$ we can take inverse branches $g_n$ of $f^n$ fixing $p$.
		Then,by Lemma \ref{lem:notinpf} $p$ is Lyapunov unstable, has at least one repelling 
		eigenvalue and no attracting or parabolic eigenvalue. Thus the statement is proved
		for $m=m_0$.
		
		Now let $m \geq 1$ and 
		assume that $p \in \Omega_m$ and that the induction hypothesis is satisfied for $m$.
		Let $L$ be an irreducible component of $\Omega_{m-1}$ containing $p$.
		Note that by assumption, $p \in \pf(f,L)$ since $p \in \Omega_m$.
		We will distinguish 3 mutually exclusive cases:
		\begin{enumerate}
			\item $p \in \reg(\pf(f,L)) \cap \crit(f,L) $
			\item $p \in \reg(\pf(f,L)) \backslash \crit(f,L)$
			\item $p \in \sing(\pf(f,L))$ 
		\end{enumerate}

		Case (1) follows from Lemma \ref{lem:regandcrit} and the induction hypothesis.

		Assume we are in case (2). Let $P$ be the irreducible post-critical component of
		$\pf(f,L)$ containing $p$. By item (3) of the induction hypothesis, $p$ is a Lyapunov unstable
		fixed point of the restriction $f: P \to P$. By Lemma \ref{lem:brinverse} and 
		Lemma \ref{lem:notinpf}, $p$ is also a Lyapunov unstable fixed point for $f: L \to L$,
		and it has at least one repelling eigenvalue, so the three items of the induction are proved.

		Finally, let us assume that we are in case (3). Since we assumed that any intersection
		of components of $\Omega_1$ is smooth, Theorem \ref{th:pcfatwdcover} implies that 
		$p$ belongs to at least two different components $P_1$ and $P_2$ of $\Omega_{m}$.
		Since the irreducible components of $\Omega_1$ are weakly transverse, Lemma \ref{lem:pairwisetransverse}
		implies that
		$T_p L \subset T_p P_1 + T_p P_2$. 
		Also note that each of the $T_p P_i$ are 
		invariant under $Df(p)$. Therefore any eigenvalue of $Df(p)$ on $T_p L$ must be an eigenvalue
		on either $T_p P_1$ or $T_p P_2$. This together with the induction hypothesis imply
		that all three desired items of the induction are true.

		Thus the induction is finished and the Theorem is proved, since it corresponds to the 
		case $m=0$.
	\end{proof}

	\section{Super-attracting basins are the only Fatou components}

	We are now able to prove Theorem C:
	
	\begin{theoC}\label{th:main}
		Let $f: \pk \to \pk$ be a PCF endomorphism, and let $L_i, i  \in I$ be the irreducible components
		of its post-critical set $\pf(f)$. Assume that
		\begin{enumerate}
			\item $(L_i)_{i \in I}$ is weakly transverse
			\item $\pk \backslash\pf(f)$ is Kobayashi hyperbolic.
		\end{enumerate}
		Then the Fatou set of $f$ is empty or a finite union of basins of super-attracting cycles,
		whose points are zero-dimensional intersections of components $L_i$.
	\end{theoC}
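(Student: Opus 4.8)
The plan is to analyze an arbitrary Fatou component $V$ of $f$ and show it must eventually land at a superattracting cycle consisting of a zero-dimensional intersection of post-critical components. First I would recall the general structure theory for Fatou components of endomorphisms of $\pk$ (as in Fornaess–Sibony, \cite{fornaess1994complex}, or \cite{dinh2010dynamics}): passing to a subsequence $f^{n_j}$ converging locally uniformly on $V$ to some limit map $h$, and up to replacing $f$ by an iterate, one reduces to the case of a fixed, invariant Fatou component $V$ with $f(V)=V$. The key dichotomy is on the rank of the limit maps $h$: if some limit map is non-constant, then $V$ carries a non-trivial complex structure on the limit set and one extracts a rotation-domain-like behavior; if every limit map is constant, then the iterates $f^{n}$ contract $V$ to a cycle of points.

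The first main step is to rule out recurrent (rotation-type) Fatou components. Using Corollary \ref{coro:pcfnorot}, the restriction $f^m : L \to L$ has no rotation domain for any intersection $L=\bigcap_{j\in J}L_j$ of periodic post-critical components (with $m$ the lcm of the periods). For the ambient map on $\pk$ itself, if $V$ were a recurrent Fatou component that is not a superattracting basin, then by the classification of such components a limit map $h$ would have image a submanifold $M\subset \pk$ on which $f$ (an iterate) acts as an automorphism, and the recurrence would force the closure of $M$ to avoid — or rather interact with — the post-critical set in a way incompatible with Kobayashi hyperbolicity. Concretely: the Kobayashi hyperbolicity of $\pk\setminus\pf(f)$ means any entire curve or, more relevantly, any family of maps from a fixed source into $V$ (which must avoid $\pf(f)$, since $\pf(f)$ is forward invariant and $V$ is a Fatou component disjoint from $\julia(f)\supset\pf(f)$) is normal and, in the limit, cannot produce a non-degenerate automorphism-invariant subvariety. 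This forces all limit maps to be constant.

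Once all limit maps on $V$ are constant, the standard argument (Fornaess–Sibony) shows $V$ is the basin of attraction of a cycle; passing again to an iterate, $V$ is the basin of a fixed point $p\in\pk$ with $f(p)=p$ and $f^n\to p$ uniformly on compacta of $V$. It remains to identify $p$: since $p\in\overline V\subset\overline{\fatou(f)}$ but also $p$ is a limit of points of $V$, and since $\pf(f)$ is forward invariant and closed, one checks $p\in\pf(f)$ (otherwise $p$ would lie in a neighborhood avoiding $\pf(f)$, and by Theorem \ref{th:normalitylift} the inverse branches at $p$ would be normal, contradicting $p$ being attracting with non-empty basin — combine with Theorem B, which says a periodic point not in $\pf(f)$ is Lyapunov unstable with a repelling eigenvalue). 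Then Theorem B applied to $p$ forces $Df^l(p)$ to be nilpotent, i.e. $p$ is superattracting; and the only way for a fixed point to have all eigenvalues zero while lying on $\pf(f)$, given weak transversality and the inductive structure of Theorem \ref{th:pcfatwdcover}, is for $p$ to be a zero-dimensional intersection $L_{i_1}\cap\cdots\cap L_{i_k}$ of $k$ components (each successive restriction contributes a ``critical direction'', and superattractingness in every direction forces $p$ to sit at the bottom $\Omega_k$ of the tower, which consists of points). Finally, finiteness of the set of superattracting cycles follows because their points lie in the finite set of zero-dimensional intersections of the finitely many $L_i$, and each superattracting basin has only finitely many immediate components; hence $\fatou(f)$ has finitely many periodic components and every component is preperiodic.

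The main obstacle I expect is the exclusion of recurrent Fatou components with non-constant limit maps in the general dimension $k$: in dimension $2$ one knows the post-critical components are normalized by tori or $\rs$ and the argument of \cite{fornaess1994complex} is available, but for $k\geq 3$ one must argue purely from Kobayashi hyperbolicity of $\pk\setminus\pf(f)$ together with the absence-of-rotation-domain results (Corollary \ref{coro:pcfnorot}) on all the strata $L_{i_1}\cap\cdots\cap L_{i_r}$, and carefully control how the limit submanifold $M$ meets $\pf(f)$ — the delicate point being that $M$ need not be contained in, nor disjoint from, the post-critical set a priori, and one must use weak transversality to stratify $M\cap\pf(f)$ and push the rotation-domain obstruction down through the tower $\Omega_0\supset\Omega_1\supset\cdots$.
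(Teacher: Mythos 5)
Your outline leaves the central difficulty unproved, and you acknowledge as much in your last paragraph: the exclusion of non-constant limit maps. You invoke a ``classification of recurrent Fatou components'' in which a non-constant limit map has image a submanifold on which an iterate acts as an automorphism; no such classification is available for $k\geq 3$, and the paper does not use one. Its mechanism is different and is exactly what your sketch is missing: (i) limit values of convergent subsequences lie in $\pf(f)$; (ii) weak transversality enters through Lemma \ref{lem:kobtan}, which produces vector fields tangent to the stratum $M_y=\bigcap_{i\in I_y}L_i$ that are bounded in the Kobayashi metric of $\pk\setminus\pf(f)$, and combined with the expansion of that metric by $f$ this shows every limit map $h$ is a \emph{submersion} onto its stratum $M_h$; (iii) hence $Df^n$ is bounded along $T\,M_h$ over $h(U)$, so $f^k\circ h(U)$ is an open subset of the Fatou set of the restriction of an iterate to the periodic stratum $f^k(M_h)$; (iv) Corollary \ref{coro:pcfnorot} (no rotation domains on intersections of periodic post-critical components) then forbids a limit of full rank there, yielding a new limit function of strictly smaller rank; (v) descending induction on the dimension of $M_h$ produces a constant limit, automatically located at a zero-dimensional intersection of the $L_i$. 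Without steps (ii)--(v) your argument does not get off the ground in dimension $\geq 3$.

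Two further points are wrong or unjustified as written. First, your claim that a Fatou component is disjoint from $\pf(f)$ because $\pf(f)\subset\julia(f)$ is false: the theorem's own conclusion exhibits superattracting fixed points lying in $\pf(f)$ whose basins are Fatou components, so $\pf(f)$ meets the Fatou set whenever the latter is non-empty; what is true (and what the paper uses, quoting \cite{rong2008fatou}) is only that \emph{limit values} of convergent subsequences on a Fatou component lie in $\pf(f)$. Second, you derive zero-dimensionality of the limit point from nilpotency of $Df^l(p)$ ``via the tower of Theorem \ref{th:pcfatwdcover}''; this implication is not established anywhere and is not how the paper proceeds --- nothing in Theorem B prevents a fixed point with nilpotent tangential behaviour from sitting on a positive-dimensional stratum (e.g.\ a superattracting point of the restriction to a post-critical $\rs$), and in the paper the zero-dimensionality is an output of the rank-reduction argument ($M_h=\{y\}$), after which nilpotency follows from $Df^n\to 0$ on the basin together with Theorem B, not the other way around.
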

		
		\begin{proof}[Proof of Theorem C]
			Let us start by introducing some notations. 
			Denote by $(L_i)_{i \in I}$ the family of the irreducible components of $\pf(f)$.

			For every $y \in \pf(f)$, denote by 
			$M_y$ the intersection of all irreducible components of $\pf(f)$ containing $y$:
			$$M_y =\bigcap_{i \in I_y} L_i $$
			where $I_y$ is the set of $i \in I$ such that $y \in L_i$.

			For any $i \in I_y$, there exists $(k_i, m_i) \in \N \times \N^*$ such that 
			$f^{m_i + k_i}(L_i)=f^{m_i}(L_i)$. 
			Let $m_y:= \lcm(m_i, i \in I_y)$
			and 
			$$N_y:=\bigcap_{i \in I_y} f^{k_i}(L_i).$$
			Then we have $f^{m_y}(N_y) \subset N_y$.

			For any closed hypersurface $V$, denote by $\| \cdot \|_{\pk-V}$ the Kobayashi
			pseudo-metric of $\pk-V$.

			\begin{step}
				Let $x \in \pk-f^{-1}(\pf(f))$. Then for every $v \in T_x \pk$, we have
				$$\|Df(x)\cdot v\|_{\pk-\pf(f)} \geq \|v \|_{\pk-\pf(f)}.$$
			\end{step}
			
			\begin{proof}
				Since the restriction $f: \pk-f^{-1}(\pf(f)) \to \pk-\pf(f)$ is a covering map, we have
				$$\|Df(x)\cdot v\|_{\pk-\pf(f)} =   \|v \|_{\pk-f^{-1}(\pf(f))}.$$	
				Moreover, since there is an inclusion $i: \pk-f^{-1}(\pf(f)) \to \pk-\pf(f)$, we have
				$$\|v \|_{\pk-f^{-1}(\pf(f))} \geq \|v \|_{\pk-\pf(f)}.$$
			\end{proof}
			
			\begin{step}
				Let $z \in \fatou(f)$ be a point whose orbit
				never lands in $\pf(f)$, and let $f^{n_j}$ be a subsequence converging to some map $h$ on a neighborhood
				of $z$. Let $y:=h(z)$. Then $y \in \pf(f)$, and 
				$Dh(z): T_z \pk \to T_y M_y$ is surjective.
			\end{step}
			
			\begin{proof}[Proof]
				The fact that $y$ must be in $\pf(f)$ is well-known; we refer the reader 
				to  \cite{rong2008fatou} for instance.
				Let $(P_i)_{i \in I_y}$ denote a collection of irreducible polynomials
				whose zero loci correspond to the irreducible components $L_i$ of $\pf(f)$ containing 
				$y$, and let $v \in \bigcap_{i \in I_y} \ker DP_i(y)$.
				Let us prove that there exists $w \in T_z \pk$ such that 
				$Dh(z) \cdot w = v$. Let $\xi$ be a holomorphic vector field
				defined on a neighborhood $U$ of $y$,
				such that $\xi(y)=v$ given by Lemma \ref{lem:kobtan}.
				For any $j \in \N$ large enough to ensure that $f^{n_j}(z) \in U$,
				let $w_j = (Df^{n_j})^{-1}(f^{n_j}(z)) \cdot \xi(f^{n_j}(z))$.
				Since $f$ expands the Kobayashi metric of $\pk - \pf(f)$ (Step 1), we have:
				$$\|w_j\|_{\pk-\pf(f)} \leq \|\xi(f^{n_j}(z))\|_{\pk-\pf(f)}$$
				Moreover, by Lemma \ref{lem:kobtan}, there exists a constant 
				$C>0$ independant from $j$ such that:
				$$\|\xi(f^{n_j}(z))\|_{\pk-\pf(f)} \leq C.$$
				Therefore up to extracting one more time, the sequence $w_j$ converges to some $w \in T_z \pk$.
				By continuity of $\xi$, we have that $Dh(z)\cdot w = v$.
			\end{proof}

			\begin{step}
				Let $h$ be a Fatou limit function, $z \in \fatou(f)$ and let $y:=h(z)$. Assume that 
				$Dh(z): T_z \pk\to T_y M_y$ is surjective. Then 		
				we have that the sequence $(Df^k(y)_{|T_y M_y})_{k \in \N}$ is uniformly bounded
				(in the Fubini-Study metric of $\pk$).
			\end{step}

			\begin{proof}[Proof of Step 3]		
				Let $v \in T_y L$: we shall prove that the sequence $Df^k(y) \cdot v$ is bounded.
				According to Step 1, there is $w \in T_z \pk$ such that 
				$Dh(z) \cdot w=v$. Let $j,k \in \N^*$.
				We have:
				\begin{equation*}
				Df^{n_j+k}(z) \cdot w = Df^k(f^{n_j}(z)) \circ Df^{n_j}(z) \cdot w
				\end{equation*} 
				Since $Df^{n_j}(z)$ converges to $Dh(z)$, for all $k \in \N^*$ there exists $j_0 \in \N$ such that for all $j \geq j_0$,
				\begin{equation*}
				\left| \|Df^{n_j+k}(z) \cdot w \| - \| Df^k(y) \cdot v \| \right| \leq \frac{1}{k}.
				\end{equation*}
				Finally, since $z$ belongs to the Fatou set of $f$, the sequence 
				$\|Df^n(z)\cdot w\|$ is bounded, therefore so is the sequence 
				$\|Df^k(y)\cdot v\|$.
				
			\end{proof}

			\begin{step}\label{step:submh}
				Let $U$ be a Fatou component of $f$, and let $h: U \to \pk$ be a Fatou limit function.
				There is a unique maximal $J_h \subset I$ such that 
				$h(U) \subset \bigcap_{j \in J_h} L_j:=M_h$.
				Moreover, the set $V$ of those $z \in U$ such that $L_{h(z)}=M_h$ is open and dense in $U$, and 
				$h: U \to M_h$ is a submersion.
			\end{step}
			
			\begin{proof}
				For every component $L_i$ of $\pf(f)$, let $P_i$ denote an irreducible homogenous polynomial
				on $\C^{k+1}$ such that $M_i=\{Pi=0\}$ in homogenous coordinates. Since $h(U) \subset \pf(f)$,
				we have: 
				$$\prod_{i \in I} P_i \circ h =0.$$
				Let $\phi_i=P_i \circ h$, and let $J=\{i \in I, \phi_i  \text{ does not vanish identically on  }U \}$.
				
				Then $\bigcup_{i \in J} \phi^{-1}(0)$ is a proper analytic subset of $U$ (possibly empty), so 
				$V:= U - \bigcup_{i \in J} \phi^{-1}(0)$ is open and dense in $U$.
				
				Let $z \in V$ and $y:=h(z)$. By definition of $V$, if $y$ belongs to some component $L$ of $\pf(f)$, then $h(U) \subset L$;
				therefore $h(U) \subset M_y$. Since this is true for any $z \in V$, we have that for every $z, z' \in V$,
				$M_{h(z)}=M_{h(z'}$. We can therefore set $M_h = M_h(z)$ for any arbitrary $z \in V$. 
				Moreover, by Step 1, $Dh(z): T_z \pk \to T_y M_y$ is surjective if 
				the orbit of $z$ does not meet $\pf(f)$, or in other words, if 
				$z \in V - \bigcup_{ n \geq 0} f^{-n}(\pf(f))$. But for all $n \in \N$, 
				$f^{-n}(\pf(f))$ is a proper closed algebraic hypersurface, so by Baire category
				$V - \bigcup_{ n \geq 0} f^{-n}(\pf(f))$ is dense in $V$. This implies that for all $z \in V$,
				$Dh(z): T_z \pk \to T_y M_y$ is surjective; and since $V$ is dense in $U$, 
				we have that $Dh(z): T_z  \pk \to T_y M_h$ is surjective for all $z \in U$. 
			\end{proof}
			
			\begin{step}
				With the same notations as the previous step: 
				there exists $(k,m) \in \N \times \N^*$ such that $f^{m+k}(M_h)=f^k(M_h)$,
				and
				$f^k \circ h(U)$ is in the Fatou set of the restriction of 
				$f^m$ to $N_h:=f^k(M_h)$.			
			\end{step}
			
			\begin{proof}		
				According to Step \ref{step:submh}, for any Fatou limit function $h$, the map $h: U \to M_h$ is 
				a submersion, hence open. For all $n \in \N$, $f^n \circ h$ is also a Fatou limit function, so let us denote 
				by $M_n$ the variety $M_{f^n\circ h}$. Then for all $n \in \N$, $f^n \circ h: U \to M_n$ is 
				open, which implies that all $M_n$ have the same dimension, as the ambient map $f: \pk \to \pk$ maps analytic 
				subsets to analytic subsets of same dimension.
				Since there are only finitely many different varieties of the form $\bigcap_{j \in J} L_j$ ($L_j$ being
				the irreducible components of $\pf(f)$), there are only finitely many $M_k$.
				Thus there is $(k,m) \in \N \times \N^*$ such that $M_{k+m}=M_k$.
				
				Now from the fact that $f^m$ maps an open subset of $M_k$ (namely $f^k\circ h(U)$) to another open subset 
				of $M_{k}$ (namely $f^{m+k}\circ h(U)$), it is easy to see that $f^m(M_k) \subset M_k$.
				Since $M_k$ is a closed irreducible variety, we have in fact $f^m(M_k)=M_k$.
				Finally, note that $M_k=f^k(M_h)$, again because $f^k$ maps the open subset $h(U) \subset M_h$ to 
				the open subset $f^k \circ h(U) \subset M_k$.

				Applying Step 2 to every $z \in U$, we have that for every $y \in h(U)$,
				the derivatives $Df^n_{|T_{y} M_{h}}, n \in \N$, are uniformly bounded.
				Moreover, $h(U)$ is open by Step 4 and the submersion lemma.
				Therefore, the family of maps 
				$\{f_{|h(U)}^n: h(U) \to \pk, n \in \N\}$ is normal. 
				This implies that the family $\{f_{|f^k\circ h(U)}^{\circ mn}: f^k\circ h(U) \to N_h, n \in \N\}$ 
				is normal, which is exactly saying that $f^k\circ h(U) \subset N_h$ is in the Fatou set of 
				the restriction of $f^m$ to $N_h$.
			\end{proof}
			
			\begin{step}
				Let $U$ be a Fatou component of $f$. There exists a subsequence $(m_j)_{j \in \N}$ such 
				that $f^{m_j}$ converges on $U$ to a cycle for $f$.
			\end{step}

			\begin{proof}[Proof ]
				We will prove that there is a constant Fatou limit function on $U$.
				To do this, we will prove that for every Fatou limit function $h$ on 
				$U$ such that 
				the $M_h$ given by Step 4 has dimension $\delta \geq 1$, there is a Fatou 
				limit function $\tilde h$ on $U$ such that $\dim M_{\tilde h} \leq \delta -1$. \\
				
				According to the previous step, $U':=f^k\circ h(U)$ is  an open
				subset of  the Fatou set of $g:=f_{|N_h}^m : N_h \to N_h$.
				Let $(n_j)_{j \in \N}$ be a subsequence such that $g^{n_j}$ converges to a Fatou limit function
				$h_2$ (for $g$) on $U'$. Up to extracting further, we may assume that $n_{j+1}-n_j$ tends to infinity. 
				We are going to prove that for any $z \in U'$, the rank of $Dh_2(z)$ must be less than or equal to 
				$\delta-1=\dim N_h-1$. Suppose for a contradiction that it is not the case: then by the local inversion theorem, 
				we may find an open set $W \subset U'$ with $W \subset h_2(U')$.  
				Up to extracting further, we may assume without loss of generality that $n_{j+1}-n_j$ tends to infinity. 
				From the equality $g^{n_{j+1}-n_j} \circ g^{n_j}=g^{n_{j+1}}$, we deduce that $g^{n_{j+1}-n_j}$ converges to 
				$\id_W$ on $W$; therefore, $W$ is included in a rotation domain for $g$. But this is impossible by 
				Corollary \ref{coro:pcfnorot}.
				
				Therefore, there exists 
				a Fatou limit function  $h_2$ for $g=f_{|N_h}^m$ defined on $U'=f^k\circ h(U)$, 
				such that the rank of $Dh_2$ is strictly less than $\delta-1$ everywhere on $U'$.
				
				 Moreover, $\tilde h := h_2 \circ h$ is a Fatou limit function of $f$ (on $U$),
				 as can be seen by considering iterates of the form $f^{k+m n_j + p_i}$ with $i \in \N$ 
				 large and then suitable $j \in \N$, where $(p_i)_{i\in \N}$
				 is a subsequence such that $f^{p_i}$ tends to $h$.
				Moreover, the  differential of $\tilde h$ has a rank at most $\delta-1$ everywhere on $U$. \\

				Therefore, by descending induction on $\delta$, we can find a Fatou limit function 
				on $U$ with rank $0$, or in other words, a constant Fatou limit function $h=y$,
				with $y \in \pf(f)$.
				By Step 4, we have $M_y = M_h = \{y\}$, so $\{y\}$ is a zero-dimensional intersection
				of hyperplanes in $\pf(f)$, and therefore $y$ has a finite orbit. So up to replacing $h$
				by $f^p \circ h$ for some appropriate value of $p \in \N$ 
				(we may do so since $f^p \circ h$ is also a Fatou limit function), we may assume that 
				$y$ is periodic.

			\end{proof}
			
			\begin{step}
				For every $z$ in the Fatou set of $f$, we have $\limn Df^n(z)=0$.
			\end{step}

			\begin{proof}[Proof]
				Let $U$ be a Fatou component of $f$.
				By the previous step, there is a subsequence $f^{n_j}$
				such that $f^{n_j}$ converges on $U$ to a finite set; therefore 
				$Df^{n_j}$ converges locally uniformly to zero on $U$.

				Now observe that for any $n \in \N$, 
				$$\|Df^n\| \leq  \|Df^{n_j}\| \|Df^{n-n_j}\|$$
				where $j=\max\{i , n_i \leq n\}$, and that $\|Df^{n-n_j}\|$ is uniformly bounded
				on every compact of $\Omega$ (by normality). Here we used the Fubini-Study metric on $\pk$
				(or any other continuous Hermitian metric on $\pk$).
			\end{proof}

			\begin{step}
				For every Fatou component $U$ of $f$, there is a periodic point $y$ such that $f^n$ 
				converges to that cycle, locally uniformly on $U$.
			\end{step}	
			
			\begin{proof}
				Applying Step 2 and the previous step, we obtain that for every $z \in U$ and every
				Fatou limit function $h$,
				$M_y$ has dimension zero, with $y:=h(z)$. Thus by Step 4,
				for every Fatou limit function $h$, there is a  $z \in U$ such that if
				$y:=h(z)$, we have  that $h(U)=M_y=\{y\}$, and $y$ is  a periodic point for $f$
				in view of Step 6. Since $M_y=\{y\}$, $y$ can be written as the intersection of 
				at least $k$ hyperplanes that are irreducible components of $\pf(f)$. There are
				only finitely many such points, and they are all fixed by $f$. 
				Denote by $F$ the set of those points. Let $K \subset U$ be a compact with non-empty interior,
				and let $\epsilon>0$. There exists $n_0 \in \N$ such that for all $z \in K$, for all
				$n \geq n_0$, $f^n(z) \in \bigcup_{p \in F} D(p,\epsilon)$. If $\epsilon$ is small enough,
				the disks $D(p,\epsilon)$, $p \in F$, are pairwise disjoint and 
				for all $p \neq q$, $f(D(p, \epsilon)) \cap D(q,\epsilon)=\emptyset$ (because all 
				the $p \in F$ are fixed points). Therefore, if $\epsilon$ is small enough, and 
				$n \geq n_0$, there is a unique $p \in F$ such that $f^n(K) \subset D(p,\epsilon)$, and 
				moreover for all $q \neq p$, we have $f^{n+1}(K) \cap D(q,\epsilon)=0$.
				Therefore, by induction, we must have $f^n(K) \subset D(p,\epsilon)$ for all $n \geq n_0$.
				This proves that the sequence of iterates $f_{|K}^n$ restricted to $K$ converge to $p$.
			\end{proof}

			\begin{step}
				The periodic cycles from the previous step must be superattracting.
			\end{step}
			
			\begin{proof}
				Let $p \in \pk$ be such a periodic point. Replacing $f$ by one of its iterates if necessary,
				we assume that $p$ is a fixed point. Since there is an open set $U \subset \pk$ of points 
				converging locally uniformly to $p$ under iteration, $Df(f^n(z))$ converges locally uniformly
				to $Df(p)$, and $Df^n(z)$ converges to $0$. Therefore $p$ cannot have any repelling eigenvalue. 
				By Theorem B,
				this means that $Df(p)$ is nilpotent, hence that $p$ is superattracting.
			\end{proof}

			To sum things up, we have proved that for every Fatou component $U$, the iterates
			$f^n$ converge locally uniformly on $U$ to some super-attracting periodic point $y$, that 
			is a maximal codimension intersection of irreducible components of $\pf(f, \pk)$.
			This proves Theorem C.
		\end{proof}

	\section{The case of moduli space maps}\label{sec:koch}

	In this final section, we recall Koch's results from \cite{koch2013teichmuller} and explain why Theorems A, B and C apply
	to her construction. 
	It starts with a topological object on the 2-sphere $S^2$:
		
	\begin{defi}
		A topological polynomial is an orientation-preserving branched cover of the 2-sphere $h: S^2 \to S^2$, 
		such that there exists $p \in S^2$ with $h^{-1}(p)=\{p\}$, and such that the post-critical set $P_h$ is finite.
	\end{defi}

	Post-critically finite polynomials are topological polynomials in the sense of the previous definition,
	by identifying the Riemann sphere with $S^2$ and taking $p=\infty$.

	Given a topological polynomial $h$ and a complex structure $\struct$ on $(S^2, P_h)$,
	one call pull back the complex structure by $h$ to obtain a (possibly different) complex structure
	${h^*} {\struct}$. This descends to a holomorphic map $\sigma_h: \teich(S^2, P_h) \to \teich(S^2, P_h)$
	(see \cite{koch2013teichmuller} for more details). This map is of particular interest as it will have have 
	a fixed point in $\teich(S^2, P_h)$ if and only if $h$ is equivalent in some sense to an "usual" polynomial on the 
	Riemann sphere,
	by a celebrated theorem of Thurston.
	
	The \emph{moduli space} 
	$\mcal_{P_h}$ is the set of all injections $P_h \hookrightarrow \rs$ modulo composition by Möbius transformation.
	If $N=\card P_h$, it is easy to see that it identifies to an open subset of $\C^{N-3}$ if $N > 3$, and a single point
	otherwise. In the following, we will always assume that $N>3$, and let $k:=N-3$. 
	The Teichmüller space $\teich(S^2, P_h)$ is the universal cover of the moduli space: 
	there is a natural holomorphic universal covering $\pi : \teich(S^2, P_h) \to \mcal_{P_h}$.
	
	There are several interesting compactifications of the moduli space; in the case of a topological polynomial, 
	there is a distinguished hypersurface corresponding to the "infinity" point $p$. Therefore it is natural to 
	consider the compactification $\mcal_{P_h} \hookrightarrow \pk$, where the distinguished hypersurface would be
	in the hyperplane at infinity.
	
	In \cite{koch2013teichmuller}, Koch studies the existence of a holomorphic endomorphism $f: \pk \to \pk$ making the following 
	diagram commute:
	
	$$\xymatrix{
		\teich(S^2, P_h) \ar[r]^{\sigma_h}  \ar[d]_\pi & \teich(S^2, P_h)  \ar[d]^\pi \\
		\pk  & \pk \ar[l]^f
		}$$
	
	When it exists, such an endomorphism is called a \emph{moduli space map}. It is always post-critically finite, 
	and its post-critical set consists in a union of hyperplanes. Moreover, $\pk \backslash \pf(f) \simeq \mcal_{P_h}$ is 
	Kobayashi hyperbolic (Corollary 6.3).
	Koch proved that if $h$ is a topological polynomial that is either is unicritical or has a periodic critical point,
	then there exists a corresponding moduli space map (Theorem 5.17 and Theorem 5.18).

	Since any intersection of projective hyperplanes is a projective space, such intersections are always smooth.
	Moreover, they intersect each other weakly transversally in the sense of definition \ref{def:weaklytransverse}.
	
	Therefore, Theorems A, B and C apply to moduli space maps.

		\bibliographystyle{alpha}
		\bibliography{biblio}

\newcommand{\etalchar}[1]{$^{#1}$}
\begin{thebibliography}{ABD{\etalchar{+}}ar}

\bibitem[ABD{\etalchar{+}}ar]{astorg2014two}
Matthieu Astorg, Xavier Buff, Romain Dujardin, Han Peters, and Jasmin Raissy.
\newblock A two-dimensional polynomial mapping with a wandering {F}atou
  component.
\newblock {\em Ann. of Math.}, to appear.

\bibitem[DS10]{dinh2010dynamics}
Tien-Cuong Dinh and Nessim Sibony.
\newblock Dynamics in several complex variables: endomorphisms of projective
  spaces and polynomial-like mappings.
\newblock In {\em Holomorphic dynamical systems}, pages 165--294. Springer,
  2010.

\bibitem[DT05]{de2005phenomene}
Henry De~Th{\'e}lin.
\newblock Un ph{\'e}nom{\`e}ne de concentration de genre.
\newblock {\em Mathematische Annalen}, 332(3):483--498, 2005.

\bibitem[FS92]{fornaess1992critically}
John~Erik Forn{\ae}ss and Nessim Sibony.
\newblock Critically finite rational maps on $\mathbb{P}^2$.
\newblock In {\em The Madison Symposium on Complex Analysis: Proceedings of the
  Symposium on Complex Analysis Held June 2-7, 1991 at the University of
  Wisconsin-Madison}, volume 137, page 245. American Mathematical Soc., 1992.

\bibitem[FS94]{fornaess1994complex}
John~Erik Forn{\ae}ss and Nessim Sibony.
\newblock {\em Complex dynamics in higher dimensions}.
\newblock Springer, 1994.

\bibitem[GHK16]{gauthier2016symmetrization}
Thomas Gauthier, Benjamin Hutz, and Scott Kaschner.
\newblock Symmetrization of rational maps: Arithmetic properties and families
  of {L}attès maps of $\mathbb{P}^k$.
\newblock {\em Preprint; arXiv:1603.04887}, 2016.

\bibitem[GR58]{grauert1958komplexe}
Hans Grauert and Reinhold Remmert.
\newblock Komplexe {R}{\"a}ume.
\newblock {\em Mathematische annalen}, 136(3):245--318, 1958.

\bibitem[Jon98]{jonsson1998some}
Mattias Jonsson.
\newblock Some properties of 2-critically finite holomorphic maps of
  $\mathbb{P}^2$.
\newblock {\em Ergodic Theory and Dynamical Systems}, 18(01):171--187, 1998.

\bibitem[Koc13]{koch2013teichmuller}
Sarah Koch.
\newblock Teichm{\"u}ller theory and critically finite endomorphisms.
\newblock {\em Advances in Mathematics}, 248:573--617, 2013.

\bibitem[Ron08]{rong2008fatou}
Feng Rong.
\newblock The {F}atou set for critically finite maps.
\newblock {\em Proceedings of the American Mathematical Society},
  136(10):3621--3625, 2008.

\bibitem[Thu85]{thurston1985combinatorics}
William~P Thurston.
\newblock {\em On the Combinatorics of Iterated Rational Maps}.
\newblock Princeton University, 1985.

\bibitem[Ued98]{ueda1998critical}
Tetsuo Ueda.
\newblock Critical orbits of holomorphic maps on projective spaces.
\newblock {\em The Journal of Geometric Analysis}, 8(2):319--334, 1998.

\end{thebibliography}

\end{document}